\tikzset{vertex/.style={circle,draw,fill,inner sep=0pt,minimum size=1mm}}
\theoremstyle{plain}
\newtheorem{thm}{Theorem}
\newtheorem{lem}[thm]{Lemma}
\newtheorem{prop}[thm]{Proposition}
\newtheorem{cor}[thm]{Corollary}
\theoremstyle{definition}
\newtheorem{definition}[thm]{Definition}
\newtheorem{exl}[thm]{Example}
\numberwithin{thm}{section}
\def\N{{\mathbb N}}
\def\Z{{\mathbb Z}}
\def\R{{\mathbb R}}
\begin{document}
\title{Generalized Normal Product Adjacency in Digital Topology}
\author{Laurence Boxer
         \thanks{
    Department of Computer and Information Sciences,
    Niagara University,
    Niagara University, NY 14109, USA;
    and Department of Computer Science and Engineering,
    State University of New York at Buffalo.
    E-mail: boxer@niagara.edu
    }
}

\date{ }
\maketitle

\begin{abstract}
We study properties of Cartesian products of digital images for which
adjacencies based on the
normal product adjacency are used. We
show that the use of such adjacencies
lets us obtain many ``product properties'' for which the analogous
statement is either unknown or invalid if, instead, we
were to use $c_u$-adjacencies.

Key words and phrases: digital topology, digital image, continuous multivalued function, shy map, retraction
\end{abstract}

\section{Introduction}
We study adjacency relations based on
the normal product adjacency for Cartesian products of
multiple digital images. Most of the literature
of digital topology focuses on images that use a
$c_u$-adjacency; however, the results of this paper seem to indicate that for Cartesian products of
digital images, the natural adjacencies to use are
based on the normal product adjacency of the factor
adjacencies, in the sense of preservation of many 
properties in Cartesian products.

\section{Preliminaries}
\label{prelims}
We use $\N$, $\Z$, and $\R$ to represent
the sets of natural numbers, integers, and 
real numbers, respectively,  

Much of the material that appears in this section is quoted or paraphrased
from~\cite{BoxSta16,BoSt1}, and other papers cited in this section.

We will assume familiarity with the topological theory of digital images. See, e.g., \cite{Boxer94} for many of the standard definitions. All digital images $X$ are assumed to carry their own adjacency relations (which may differ from one image to another). When we wish to emphasize the particular adjacency relation we write the image as $(X,\kappa)$, where $\kappa$ represents
the adjacency relation.

\subsection{Common adjacencies}
Among the commonly used adjacencies are the $c_u$-adjacencies.
Let $x,y \in \Z^n$, $x \neq y$. Let $u$ be an integer,
$1 \leq u \leq n$. We say $x$ and $y$ are $c_u$-adjacent if
\begin{itemize}
\item There are at most $u$ indices $i$ for which 
      $|x_i - y_i| = 1$.
\item For all indices $j$ such that $|x_j - y_j| \neq 1$ we
      have $x_j=y_j$.
\end{itemize}
We often label a $c_u$-adjacency by the number of points
adjacent to a given point in $\Z^n$ using this adjacency.
E.g.,
\begin{itemize}
\item In $\Z^1$, $c_1$-adjacency is 2-adjacency.
\item In $\Z^2$, $c_1$-adjacency is 4-adjacency and
      $c_2$-adjacency is 8-adjacency.
\item In $\Z^3$, $c_1$-adjacency is 6-adjacency,
      $c_2$-adjacency is 18-adjacency, and $c_3$-adjacency
      is 26-adjacency.
\end{itemize}

Given digital images or graphs $(X,\kappa)$ and $(Y,\lambda)$, the
{\em normal product adjacency} $NP(\kappa,\lambda)$ (also called the
{\em strong adjacency}~
\cite{vLW} and denoted
$\kappa_*(\kappa,\lambda)$ in~\cite{BoxKar12}) generated by
$\kappa$ and $\lambda$ on the Cartesian product $X \times Y$ is defined
as follows.

\begin{definition}
\label{NP-def}
\rm{\cite{Berge}}
Let $x, x' \in X$, $y, y' \in Y$.
Then $(x,y)$ and $(x',y')$ are $NP(\kappa,\lambda)$-adjacent in $X \times Y$
if and only if
\begin{itemize}
\item $x=x'$ and $y$ and $y'$ are $\lambda$-adjacent; or
\item $x$ and $x'$ are $\kappa$-adjacent and $y=y'$; or
\item $x$ and $x'$ are $\kappa$-adjacent and $y$ and $y'$ are $\lambda$-adjacent. \qed
\end{itemize}
\end{definition}

\subsection{Connectedness}
A subset $Y$ of a digital image $(X,\kappa)$ is
{\em $\kappa$-connected}~\cite{Rosenfeld},
or {\em connected} when $\kappa$
is understood, if for every pair of points $a,b \in Y$ there
exists a sequence $\{y_i\}_{i=0}^m \subset Y$ such that
$a=y_0$, $b=y_m$, and $y_i$ and $y_{i+1}$ are 
$\kappa$-adjacent for $0 \leq i < m$.

For two subsets $A,B\subset X$, we will say that $A$ and $B$ are \emph{adjacent} when there exist points $a\in A$ and $b\in B$ such that $a$ and $b$ are equal or adjacent. Thus sets with nonempty intersection are automatically adjacent, while disjoint sets may or may not be adjacent. It is easy to see that a finite union of connected adjacent sets is connected.


\subsection{Continuous functions}
The following generalizes a definition of
~\cite{Rosenfeld}.

\begin{definition}\label{continuous}
{\rm ~\cite{Boxer99}}
Let $(X,\kappa)$ and $(Y,\lambda)$ be digital images. A function
$f: X \rightarrow Y$ is $(\kappa,\lambda)$-continuous if for
every $\kappa$-connected $A \subset X$ we have that
$f(A)$ is a $\lambda$-connected subset of $Y$. 
\end{definition}

When the adjacency relations are understood, we will simply say that $f$ is \emph{continuous}. Continuity can be reformulated in terms of adjacency of points:
\begin{thm}
\label{cont-by-adj}
{\rm ~\cite{Rosenfeld,Boxer99}}
A function $f:X\to Y$ is continuous if and only if, for any adjacent points $x,x'\in X$, the points $f(x)$ and $f(x')$ are equal or adjacent. \qed
\end{thm}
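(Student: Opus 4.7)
The plan is to prove the biconditional by treating the two implications separately, relying on the definition of $\kappa$-connectedness via adjacent sequences as the bridge between the set-level formulation of Definition~\ref{continuous} and the pointwise formulation in the theorem.

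For the forward direction, I would assume $f$ is $(\kappa,\lambda)$-continuous and take arbitrary $\kappa$-adjacent points $x,x' \in X$. The two-point set $\{x,x'\}$ is $\kappa$-connected, witnessed by the length-one sequence $x, x'$. Continuity then forces $f(\{x,x'\}) = \{f(x), f(x')\}$ to be $\lambda$-connected in $Y$. If $f(x)=f(x')$, the conclusion holds trivially; otherwise the connecting sequence in a two-element set must consist of just these two distinct points, so they are $\lambda$-adjacent.

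For the converse, assume the pointwise condition and let $A \subset X$ be $\kappa$-connected. Pick any two points of $f(A)$, say $p = f(a)$ and $q = f(b)$ with $a,b \in A$. Connectedness of $A$ yields a sequence $a = y_0, y_1, \ldots, y_m = b$ in $A$ whose consecutive entries are $\kappa$-adjacent. Applying the hypothesis to each consecutive pair gives that $f(y_i)$ and $f(y_{i+1})$ are equal or $\lambda$-adjacent. Collapsing any consecutive duplicates in $f(y_0), f(y_1), \ldots, f(y_m)$ produces a subsequence in $f(A)$ from $p$ to $q$ whose consecutive entries are genuinely $\lambda$-adjacent, verifying $\lambda$-connectedness of $f(A)$.

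The only real subtlety, and therefore the step most in need of care in the write-up, is the bookkeeping in the converse: the definition of $\kappa$-connectedness demands strict adjacency between successive points in the witnessing sequence, whereas the pointwise hypothesis allows for equality of images. Removing consecutive duplicates is a harmless fix that preserves both endpoints and membership in $f(A)$, so no genuine difficulty arises, but it is worth mentioning explicitly so the logic of the argument is transparent.
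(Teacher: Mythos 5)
Your proposal is correct and is the standard argument; the paper itself states Theorem~\ref{cont-by-adj} as a cited result from \cite{Rosenfeld,Boxer99} without reproducing a proof, so there is nothing internal to compare against. Both directions are handled properly --- the forward direction via connectedness of the two-point set $\{x,x'\}$ (noting that a point is never adjacent to itself, so a connecting sequence in a two-element image forces adjacency of the two distinct values), and the converse via pushing a witnessing sequence through $f$ and collapsing consecutive repeats, which is exactly the subtlety worth flagging given that the definition of connectedness requires strict adjacency of successive terms.
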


Note that similar notions appear
in~\cite{Chen94,Chen04} under the names
{\em immersion}, {\em gradually varied operator},
and {\em gradually varied mapping}.

\begin{thm}
\label{composition}
\rm{\cite{Boxer94,Boxer99}}
If $f: (A,\kappa) \to (B,\lambda)$ and $g: (B,\lambda) \to (C, \mu)$ are
continuous, then $g \circ f: (A,\kappa) \to (C, \mu)$ is continuous. \qed
\end{thm}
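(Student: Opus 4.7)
The plan is to give a short direct argument by chaining the two hypotheses. Since Theorem~\ref{cont-by-adj} is already available, I would prefer the pointwise (adjacency) formulation over Definition~\ref{continuous}, as it eliminates the need to track images of connected subsets; however, either route works and the connected-subset route is essentially a one-line composition of set-images.

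First I would fix two $\kappa$-adjacent points $x, x' \in A$ and apply Theorem~\ref{cont-by-adj} to $f$ to conclude that $f(x)$ and $f(x')$ are either equal or $\lambda$-adjacent. Then I would split into these two cases. In the equal case, $g(f(x)) = g(f(x'))$, so the pair $(g \circ f)(x), (g \circ f)(x')$ is trivially ``equal or $\mu$-adjacent.'' In the adjacent case, I would apply Theorem~\ref{cont-by-adj} to $g$ at the $\lambda$-adjacent pair $f(x), f(x')$ to conclude that $g(f(x))$ and $g(f(x'))$ are equal or $\mu$-adjacent. Either way, the pair of images under $g \circ f$ is equal or $\mu$-adjacent, so another application of Theorem~\ref{cont-by-adj} (in the reverse direction) yields $(\kappa,\mu)$-continuity of $g \circ f$.

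There is no real obstacle here; the only thing to be careful about is that Theorem~\ref{cont-by-adj} allows $f(x) = f(x')$ even when $x \neq x'$, which is why the intermediate conclusion must be stated as ``equal or adjacent'' rather than just ``adjacent.'' Missing the equality case would make the second application of Theorem~\ref{cont-by-adj} fail, so I would explicitly note both subcases. If instead one preferred the subset formulation, the argument would be the one-liner: for any $\kappa$-connected $A' \subset A$, continuity of $f$ gives that $f(A')$ is $\lambda$-connected, and then continuity of $g$ gives that $(g \circ f)(A') = g(f(A'))$ is $\mu$-connected, which is exactly Definition~\ref{continuous} for $g \circ f$.
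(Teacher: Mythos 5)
Your argument is correct: the paper states this theorem as a cited result from \cite{Boxer94,Boxer99} without giving a proof, and the adjacency-based argument you give (with the careful case split on $f(x)=f(x')$ versus $f(x)$ $\lambda$-adjacent to $f(x')$) is the standard proof, as is your one-line alternative via Definition~\ref{continuous}. Nothing is missing.
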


\begin{exl}
\rm{\cite{Rosenfeld}}
\label{const-exl}
A constant function between digital images is continuous. \qed
\end{exl}

\begin{exl}
\label{id-exl}
The identity function $1_X: (X, \kappa) \to (X, \kappa)$ is continuous.
\end{exl}

\begin{proof} This is an immediate consequence of Theorem~\ref{cont-by-adj}.
\end{proof}

\begin{definition}
Let $(X,\kappa)$ be a digital image
in $\Z^n$.
Let $x,y \in X$. A {\em 
$\kappa$-path of length $m$ from $x$ to $y$}
is a set $\{x_i\}_{i=0}^m \subset X$
such that $x=x_0$, $x_m=y$, and
$x_{i-1}$ and $x_i$ are equal or $\kappa$-adjacent for $1 \leq i \leq m$. If $x=y$, we say $\{x\}$ is a
{\em path of length 0 from $x$ to $x$}.
\end{definition}

Notice that for a path from $x$ to $y$ as
described above, the function
$f: [0,m]_{\Z} \to X$ defined by
$f(i)=x_i$ is $(c_1,\kappa)$-continuous. Such
a function is also called a {\em 
$\kappa$-path of length $m$ from $x$ to $y$}.

\subsection{Digital homotopy}
A homotopy between continuous functions may be thought of as
a continuous deformation of one of the functions into the 
other over a finite time period.

\begin{definition}{\rm (\cite{Boxer99}; see also \cite{Khalimsky})}
\label{htpy-2nd-def}
Let $X$ and $Y$ be digital images.
Let $f,g: X \rightarrow Y$ be $(\kappa,\kappa')$-continuous functions.
Suppose there is a positive integer $m$ and a function
$F: X \times [0,m]_{{\Z}} \rightarrow Y$
such that

\begin{itemize}
\item for all $x \in X$, $F(x,0) = f(x)$ and $F(x,m) = g(x)$;
\item for all $x \in X$, the induced function
      $F_x: [0,m]_{{\Z}} \rightarrow Y$ defined by
          \[ F_x(t) ~=~ F(x,t) \mbox{ for all } t \in [0,m]_{{\Z}} \]
          is $(2,\kappa')-$continuous. That is, $F_x(t)$ is a path in $Y$.
\item for all $t \in [0,m]_{{\Z}}$, the induced function
         $F_t: X \rightarrow Y$ defined by
          \[ F_t(x) ~=~ F(x,t) \mbox{ for all } x \in  X \]
          is $(\kappa,\kappa')-$continuous.
\end{itemize}
Then $F$ is a {\rm digital $(\kappa,\kappa')-$homotopy between} $f$ and
$g$, and $f$ and $g$ are {\rm digitally $(\kappa,\kappa')-$homotopic in} $Y$.
If for some $x \in X$ we have $F(x,t)=F(x,0)$ for all
$t \in [0,m]_{{\Z}}$, we say $F$ {\rm holds $x$ fixed}, and $F$ is a {\rm pointed homotopy}.
$\Box$
\end{definition}

We denote a pair of homotopic functions as
described above by $f \simeq_{\kappa,\kappa'} g$.
When the adjacency relations $\kappa$ and $\kappa'$ are understood in context,
we say $f$ and $g$ are {\em digitally homotopic} (or just {\em homotopic})
to abbreviate ``digitally 
$(\kappa,\kappa')-$homotopic in $Y$," and write
$f \simeq g$.

\begin{prop}
\label{htpy-equiv-rel}
{\rm ~\cite{Khalimsky,Boxer99}}
Digital homotopy is an equivalence relation among
digitally continuous functions $f: X \rightarrow Y$.
$\Box$
\end{prop}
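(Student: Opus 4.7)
The plan is to verify the three properties of an equivalence relation---reflexivity, symmetry, and transitivity---by explicitly constructing the required homotopies from given ones.

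For reflexivity, given a continuous $f : (X,\kappa) \to (Y,\kappa')$, I would define the constant homotopy $F : X \times [0,1]_{\Z} \to Y$ by $F(x,t) = f(x)$. The endpoint conditions are immediate, each slice $F_x$ is a constant path (hence continuous by Example~\ref{const-exl}), and each $F_t$ is simply $f$, which is continuous by hypothesis.

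For symmetry, suppose $F : X \times [0,m]_{\Z} \to Y$ is a homotopy from $f$ to $g$. I would define the reversed homotopy $G(x,t) = F(x, m-t)$. The endpoint conditions $G(x,0) = g(x)$ and $G(x,m) = f(x)$ are immediate. For each fixed $x$, the induced path $G_x(t) = F_x(m-t)$ is the composition of $F_x$ with the $(2,2)$-continuous map $t \mapsto m-t$ on $[0,m]_{\Z}$, and continuity follows from Theorem~\ref{composition}. For each fixed $t$, we have $G_t = F_{m-t}$, which is continuous by hypothesis on $F$.

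For transitivity, suppose $F : X \times [0,m]_{\Z} \to Y$ is a homotopy from $f$ to $g$ and $G : X \times [0,n]_{\Z} \to Y$ is a homotopy from $g$ to $h$. I would concatenate them by defining
\[
H(x,t) = \begin{cases} F(x,t) & \text{if } 0 \leq t \leq m, \\ G(x, t-m) & \text{if } m \leq t \leq m+n, \end{cases}
\]
which is well-defined at $t = m$ because $F(x,m) = g(x) = G(x,0)$. The endpoint conditions $H(x,0)=f(x)$ and $H(x,m+n)=h(x)$ are immediate. For each fixed $t$, the slice $H_t$ equals either $F_t$ or $G_{t-m}$, both of which are continuous by assumption. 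For each fixed $x$, the path $H_x$ is the concatenation of the paths $F_x$ and $G_x$; by Theorem~\ref{cont-by-adj} it suffices to check adjacency across the join at $t = m$, where $H_x(m) = F_x(m) = g(x) = G_x(0) = H_x(m)$, so the values agree and continuity at the seam reduces to the continuity of $F_x$ on $[m-1,m]_{\Z}$ and $G_x$ on $[0,1]_{\Z}$, both given.

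The main conceptual obstacle is the transitivity step, specifically verifying that the concatenated path $H_x$ remains continuous across the seam $t = m$; this is handled cleanly by checking adjacency pointwise via Theorem~\ref{cont-by-adj} rather than by attempting to build the map through a single global formula.
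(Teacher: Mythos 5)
The paper states this proposition with a citation to \cite{Khalimsky,Boxer99} and gives no proof of its own, so there is nothing internal to compare against; your argument is the standard one from those sources (constant homotopy for reflexivity, time reversal for symmetry, concatenation for transitivity) and it is correct. The only point needing care is the seam check in the concatenation, and you handle it properly by verifying adjacency of consecutive values of $H_x$ via Theorem~\ref{cont-by-adj}.
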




\begin{definition}
{\rm ~\cite{Boxer05}}
\label{htpy-type}
Let $f: X \rightarrow Y$ be a $(\kappa,\kappa')$-continuous function and let
$g: Y \rightarrow X$ be a $(\kappa',\kappa)$-continuous function such that
\[ f \circ g \simeq_{\kappa',\kappa'} 1_X \mbox{ and }
   g \circ f \simeq_{\kappa,\kappa} 1_Y. \]
Then we say $X$ and $Y$ have the {\rm same $(\kappa,\kappa')$-homotopy type}
and that $X$ and $Y$ are $(\kappa,\kappa')$-{\rm homotopy equivalent}, denoted 
$X \simeq_{\kappa,\kappa'} Y$ or as
$X \simeq Y$ when $\kappa$ and $\kappa'$ are
understood.
If for some $x_0 \in X$ and $y_0 \in Y$ we have
$f(x_0)=y_0$, $g(y_0)=x_0$,
and there exists a homotopy between $f \circ g$
and $1_X$ that holds $x_0$ fixed, and 
a homotopy between $g \circ f$
and $1_Y$ that holds $y_0$ fixed, we say
$(X,x_0,\kappa)$ and $(Y,y_0,\kappa')$ are
{\rm pointed homotopy equivalent} and that $(X,x_0)$ 
and $(Y,y_0)$ have the 
{\rm same pointed homotopy type}, denoted 
$(X,x_0) \simeq_{\kappa,\kappa'} (Y,y_0)$ or as
$(X,x_0) \simeq (Y,y_0)$ when 
$\kappa$ and $\kappa'$ are understood.
$\Box$
\end{definition}

It is easily seen, from 
Proposition~\ref{htpy-equiv-rel}, that having the
same homotopy type (respectively, the same
pointed homotopy type) is an equivalence relation
among digital images (respectively, among pointed
digital images).

\subsection{Continuous and connectivity preserving multivalued functions}
A \emph{multivalued function} $f:X\to Y$ assigns a subset of $Y$ to each point of $x$. We will  write $f:X \multimap Y$. For $A \subset X$ and a multivalued function $f:X\multimap Y$, let $f(A) = \bigcup_{x \in A} f(x)$. 

\begin{definition}
\label{mildly}
\rm{\cite{Kovalevsky}}
A multivalued function $f:X\multimap Y$ is \emph{connectivity preserving} if $f(A)\subset Y$ is connected whenever $A\subset X$ is connected.
\end{definition}

As is the case with Definition \ref{continuous}, we can reformulate connectivity preservation in terms of adjacencies.

\begin{thm}
\rm{\cite{BoxSta16}}
\label{mildadj}
A multivalued function $f:X \multimap Y$ is \emph{connectivity preserving} if and only if the following are satisfied:
\begin{itemize}
\item For every $x \in X$, $f(x)$ is a connected subset of $Y$.
\item For any adjacent points $x,x'\in X$, the sets $f(x)$ and $f(x')$ are adjacent. \qed
\end{itemize}
\end{thm}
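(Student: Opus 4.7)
The plan is to prove the two implications separately, using the observations from the earlier subsections that singletons and two-element adjacent sets are connected, and that a finite union of connected, pairwise-chained adjacent sets is connected.

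For the forward direction, I assume $f$ is connectivity preserving. For any $x\in X$, the singleton $\{x\}$ is trivially connected, so by Definition~\ref{mildly} the set $f(\{x\})=f(x)$ is connected, giving the first bullet. For any adjacent $x,x'\in X$, the set $\{x,x'\}$ is connected, so $f(x)\cup f(x')=f(\{x,x'\})$ is connected. Since $f(x)$ and $f(x')$ are both nonempty connected subsets whose union is connected, there must be a point of $f(x)$ that is equal or adjacent to a point of $f(x')$ (otherwise no path in $f(x)\cup f(x')$ could cross from one to the other), so by the definition of adjacency of subsets given in Section~\ref{prelims}, $f(x)$ and $f(x')$ are adjacent.

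For the reverse direction, assume both bullets hold and let $A\subset X$ be connected; I will show $f(A)$ is connected. Take any two points $y,y'\in f(A)$ and choose $x,x'\in A$ with $y\in f(x)$, $y'\in f(x')$. By connectedness of $A$, there is a sequence $x=x_0,x_1,\dots,x_m=x'$ in $A$ with consecutive terms equal or adjacent. For each $i$, the first bullet gives that $f(x_i)$ is connected, and for each consecutive pair, either $f(x_i)=f(x_{i+1})$ (in the ``equal'' case) or the second bullet gives that $f(x_i)$ and $f(x_{i+1})$ are adjacent. Thus $\bigcup_{i=0}^m f(x_i)$ is a finite union of connected adjacent sets, hence connected by the observation stated in Section~\ref{prelims}. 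This connected set lies inside $f(A)$ and contains both $y$ and $y'$, so $y$ and $y'$ lie in a common connected subset of $f(A)$; since they were arbitrary, $f(A)$ is connected.

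There is no real obstacle here: the only subtlety is being careful about the ``equal or adjacent'' clause along the chain $x_0,\ldots,x_m$, and remembering that adjacency of subsets, as defined earlier, allows the witnessing points to coincide, which is exactly what makes the argument go through when consecutive $x_i$'s are equal.
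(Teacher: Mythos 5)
Your proof is correct. Note that the paper itself states this result without proof (it is quoted from \cite{BoxSta16} with the statement ending in \qed), so there is no in-paper argument to compare against; your two-direction argument --- singletons and adjacent pairs for the forward implication, and a chain $x_0,\dots,x_m$ whose point-images form a finite union of connected, consecutively adjacent sets for the reverse --- is exactly the standard proof one would expect. The only implicit assumption worth flagging is that point-images $f(x)$ are nonempty (needed when you extract an adjacent pair of points from the connectedness of $f(x)\cup f(x')$), which is the usual convention for multivalued functions and is tacitly assumed throughout the paper.
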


Definition~\ref{mildly} is related to a definition of multivalued continuity for subsets of $\Z^n$ given and explored by Escribano, Giraldo, and Sastre in \cite{egs08, egs12} based on subdivisions. (These papers make a small error with respect to compositions, that is corrected in \cite{gs15}.) Their definitions are as follows:
\begin{definition}
For any positive integer $r$, the \emph{$r$-th subdivision} of $\Z^n$ is
\[ \Z_r^n = \{ (z_1/r, \dots, z_n/r) \mid z_i \in \Z \}. \]
An adjacency relation $\kappa$ on $\Z^n$ naturally induces an adjacency relation (which we also call $\kappa$) on $\Z_r^n$ as follows: $(z_1/r, \dots, z_n/r), (z'_1/r, \dots, z'_n/r)$ are adjacent in $\Z^n_r$ if and only if $(z_1, \dots, z_n)$ and $(z_1, \dots, z_n)$ are adjacent in $\Z^n$.

Given a digital image $(X,\kappa) \subset (\Z^n,\kappa)$, the \emph{$r$-th subdivision} of $X$ is 
\[ S(X,r) = \{ (x_1,\dots, x_n) \in \Z^n_r \mid (\lfloor x_1 \rfloor, \dots, \lfloor x_n \rfloor) \in X \}. \]

Let $E_r:S(X,r) \to X$ be the natural map sending $(x_1,\dots,x_n) \in S(X,r)$ to $(\lfloor x_1 \rfloor, \dots, \lfloor x_n \rfloor)$. \qed
\end{definition}

\begin{definition}
For a digital image $(X,\kappa) \subset (\Z^n,\kappa)$, a function $f:S(X,r) \to Y$ \emph{induces a multivalued function $F:X\multimap Y$} if $x \in X$ implies
\[ F(x) = \bigcup_{x' \in E^{-1}_r(x)} \{f(x')\}. \qed \]
\end{definition}

\begin{definition}
\label{multi-cont}
A multivalued function $F:X\multimap Y$ is called \emph{continuous} when there is some $r$ such that $F$ is induced by some single valued continuous function $f:S(X,r) \to Y$. 
\qed
\end{definition}

\begin{figure}
\begin{center}
\includegraphics[height=1.25
in]{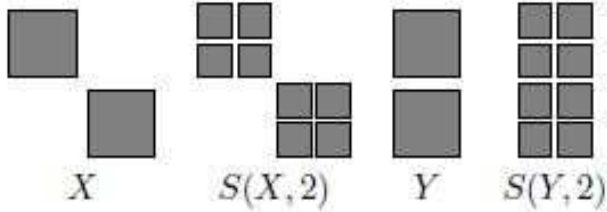}
\end{center}
\caption{\cite{BoxSta16} Two images $X$ and $Y$ with their second subdivisions. 
\label{subdivfig}}
\end{figure}

Note~\cite{BoxSta16} that the subdivision construction (and thus the notion of continuity) depends on the particular embedding of $X$ as a subset of $\Z^n$. In particular we may have $X, Y \subset \Z^n$ with $X$ isomorphic to $Y$ but $S(X,r)$ not isomorphic to $S(Y,r)$. This in fact is the case for the two images in Figure~\ref{subdivfig}, when we use 8-adjacency for all images. Then the spaces $X$ and $Y$ in the figure are isomorphic, each being a set of two adjacent points. But $S(X,2)$ and $S(Y,2)$ are not isomorphic since $S(X,2)$ can be disconnected by removing a single point, while this is impossible in $S(Y,2)$. 

The definition of connectivity preservation makes no reference to $X$ as being embedded inside of any particular integer lattice $\Z^n$.

\begin{prop}
\label{pt-images-connected}
\rm{\cite{egs08,egs12}}
Let $F:X\multimap Y$ be a continuous multivalued function
between digital images. Then
\begin{itemize}
\item for all $x \in X$, $F(x)$ is connected; and
\item for all connected subsets $A$ of $X$, $F(A)$ is connected.
\qed
\end{itemize}
\end{prop}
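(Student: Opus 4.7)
The plan is to unpack Definition~\ref{multi-cont}: continuity of $F$ supplies an $r$ and a (single-valued) $(\kappa,\kappa')$-continuous $f\colon S(X,r)\to Y$ with $F(x)=f(E_r^{-1}(x))$ for every $x\in X$, hence $F(A)=f(E_r^{-1}(A))$ for any $A\subset X$. Since $f$ is continuous, Definition~\ref{continuous} reduces both bullets to showing that the preimage sets $E_r^{-1}(x)$ and $E_r^{-1}(A)$ are $\kappa$-connected in $S(X,r)$.

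For the first bullet, I would observe that $E_r^{-1}(x)$ consists of exactly the $r^n$ points $x+(i_1/r,\ldots,i_n/r)$ with $0\le i_k\le r-1$. Rescaling by $r$, this set is in bijection with the cube $[0,r-1]_{\Z}^n$ under the same adjacency relation $\kappa$, and such a cube is $c_1$-connected, hence $\kappa$-connected for any $c_u$-adjacency. Applying continuity of $f$ via Definition~\ref{continuous} yields the first claim.

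For the second bullet, given a connected $A$ and two $\kappa$-adjacent points $x,x'\in A$, I would exhibit adjacency between the corresponding cubes as follows. Let $I=\{i:x_i\neq x'_i\}$, and choose $p\in E_r^{-1}(x)$ by setting the $i$th coordinate to $x_i+(r-1)/r$ when $i\in I$ and $x_i<x'_i$, to $x_i$ when $i\in I$ and $x_i>x'_i$, and to $x_i$ otherwise; define $p'\in E_r^{-1}(x')$ symmetrically. Then rescaling by $r$ turns $p$ and $p'$ into integer points that differ by exactly $1$ on the indices in $I$ and agree elsewhere; since $x$ and $x'$ are $\kappa$-adjacent these are $\kappa$-adjacent in $\Z^n$, so $p$ and $p'$ are $\kappa$-adjacent in $S(X,r)$. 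Thus the connected cubes $E_r^{-1}(x)$ and $E_r^{-1}(x')$ are adjacent. Walking along a $\kappa$-path in $A$ produces a chain of connected pairwise-adjacent cubes, whose union $E_r^{-1}(A)$ is connected by the remark preceding Definition~\ref{continuous}. Continuity of $f$ then gives that $F(A)=f(E_r^{-1}(A))$ is $\kappa'$-connected.

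The main obstacle is the bookkeeping in the second step: the induced adjacency on $\Z^n_r$ is defined only by rescaling from $\Z^n$, so one must carefully choose the ``nearest corner'' points $p,p'$ so that after rescaling they witness $\kappa$-adjacency in $\Z^n$. Once the corner construction is checked for $c_u$-adjacencies, everything else is a direct application of Definition~\ref{continuous} together with the finite-union-of-adjacent-connected-sets remark.
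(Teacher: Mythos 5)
Your proposal is correct, but there is nothing in the paper to compare it against: Proposition~\ref{pt-images-connected} is stated with a citation to \cite{egs08,egs12} and a \qed, with no in-paper proof. Your argument is the natural self-contained one and the details check out. Unpacking Definition~\ref{multi-cont} gives $F(A)=f(E_r^{-1}(A))$, the set $E_r^{-1}(x)$ rescales by $r$ to a translate of $[0,r-1]_{\Z}^n$, which is $c_1$-connected and hence $\kappa$-connected, and your ``nearest corner'' points $p,p'$ do rescale to integer points differing by exactly $1$ on the index set $I$ and agreeing elsewhere, so they inherit $c_u$-adjacency from $x$ and $x'$; the cubes over a path in $A$ then form a chain of adjacent connected sets. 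Two minor points worth recording. First, your argument tacitly assumes $\kappa$ is a $c_u$-adjacency (both to get connectedness of the cube from $c_1$-connectedness and to read adjacency off coordinatewise differences); this is the setting in which the paper's subdivision machinery is actually used (cf.\ Lemma~\ref{gcm-subdiv}), but the proposition as printed says only ``digital images,'' so the restriction should be made explicit. Second, for infinite connected $A$ the remark that a \emph{finite} union of adjacent connected sets is connected does not apply verbatim to $E_r^{-1}(A)$; one should note that any two points of $E_r^{-1}(A)$ already lie in the finite union of cubes over a single path in $A$, which your chain construction supplies, so connectedness of $E_r^{-1}(A)$ follows pointwise-pair by pointwise-pair.
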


\begin{thm}
\label{cont-hierarchy}
\rm{\cite{BoxSta16}}
For $(X,\kappa) \subset (\Z^n,\kappa)$, if $F:X\multimap Y$ 
is a continuous multivalued function, then $F$ is connectivity preserving. \qed
\end{thm}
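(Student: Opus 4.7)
The plan is to observe that Theorem \ref{cont-hierarchy} is a direct consequence of Proposition \ref{pt-images-connected}. By Definition \ref{mildly}, $F$ is connectivity preserving precisely when $F(A)$ is connected for every connected $A \subset X$, and the second bullet of Proposition \ref{pt-images-connected} asserts exactly this for any continuous multivalued $F$. So in the most economical reading, nothing further is required; one simply cites Proposition \ref{pt-images-connected}.

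If I instead wish to route the argument through the adjacency reformulation of Theorem \ref{mildadj}, I would verify its two conditions in turn. The first, that $F(x)$ is connected for each $x \in X$, is the first bullet of Proposition \ref{pt-images-connected}. For the second, given adjacent $x, x' \in X$, the set $\{x, x'\}$ is connected, so applying the second bullet of Proposition \ref{pt-images-connected} to $A = \{x, x'\}$ shows that $F(x) \cup F(x')$ is connected. Since each of $F(x)$ and $F(x')$ is itself connected and nonempty (the latter because $E_r^{-1}(x)$ is nonempty for the inducing map), their union can only be connected if the two pieces are adjacent in the sense of ``adjacent sets'' recalled in Section~\ref{prelims}; this is precisely the adjacency clause of Theorem \ref{mildadj}.

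I do not foresee any genuine obstacle, since Proposition \ref{pt-images-connected} is quoted from the literature and carries all of the substantive content. The only point worth flagging is the small observation that two connected, nonempty sets whose union is connected must be adjacent — this is the converse of the remark in Section~\ref{prelims} that a finite union of connected adjacent sets is connected, and follows by the usual argument that otherwise any path in the union witnessing connectedness could not cross from one piece to the other. The subdivision machinery in Definition \ref{multi-cont} plays no explicit role in the argument beyond having been used to establish Proposition \ref{pt-images-connected}.
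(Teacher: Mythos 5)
Your first paragraph is exactly the paper's (implicit) argument: the theorem is stated with no further proof precisely because the second bullet of Proposition~\ref{pt-images-connected} together with Definition~\ref{mildly} gives it immediately. The alternative route through Theorem~\ref{mildadj}, including the observation that two nonempty connected sets with connected union must be adjacent, is also sound but unnecessary.
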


The subdivision machinery often makes it difficult to prove that a given multivalued function is continuous. By contrast, many maps can easily be shown to be connectivity preserving. 

\begin{prop}
\label{1-to-all}
\rm{\cite{BoxSta16}}
Let $X$ and $Y$ be digital images.
Suppose $Y$ is connected. Then the
multivalued function $f: X \multimap Y$ defined by
$f(x)=Y$ for all $x \in X$ is connectivity preserving. \qed
\end{prop}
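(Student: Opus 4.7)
The plan is to apply Theorem~\ref{mildadj}, which reduces connectivity preservation to two pointwise conditions. Since $f$ is in effect a constant multivalued function taking the single value $Y$, both conditions should follow immediately from the hypothesis that $Y$ is connected.

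First, I would verify condition (i) of Theorem~\ref{mildadj}: for every $x \in X$, we have $f(x) = Y$, which is connected by assumption. Second, for condition (ii), I would let $x, x' \in X$ be any pair of $\kappa$-adjacent points and observe that $f(x) = f(x') = Y$; provided $Y$ is nonempty (which follows from its being connected in the usual convention, and can be handled trivially otherwise), the two sets share every point of $Y$ and are therefore adjacent in the sense defined in the Connectedness subsection.

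Alternatively, one can argue directly from Definition~\ref{mildly} without invoking Theorem~\ref{mildadj}: for any connected $A \subset X$, the image $f(A) = \bigcup_{x \in A} f(x)$ is either $Y$ (if $A \neq \emptyset$) or $\emptyset$ (if $A = \emptyset$), and both are connected. There is no real obstacle in this argument; the statement is essentially a sanity check that the ``constant-at-$Y$'' multivalued function behaves as expected, and the only mild subtlety is keeping track of the empty case, which I would dispose of in a single sentence.
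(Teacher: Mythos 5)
Your argument is correct; the paper states Proposition~\ref{1-to-all} without proof (it is quoted from \cite{BoxSta16}), and your verification---either via the two conditions of Theorem~\ref{mildadj}, noting that $f(x)=f(x')=Y$ is connected and the two sets intersect, or directly from Definition~\ref{mildly} since $f(A)=Y$ for nonempty $A$---is exactly the intended routine check.
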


\begin{prop}
\label{finite-to-infinite}
\rm{\cite{BoxSta16}}
Let $F: (X,\kappa) \multimap (Y,\lambda)$ be a multivalued
surjection between digital images 
$(X,\kappa),(Y,\kappa)\subset (\Z^n, \kappa)$. If $X$ is finite and $Y$
is infinite, then $F$ is not continuous. \qed
\end{prop}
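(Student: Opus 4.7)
The plan is to argue by contradiction, using the definition of continuous multivalued function (Definition~\ref{multi-cont}) together with the simple cardinality observation that subdivisions of finite images are finite.

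First I would suppose, toward contradiction, that $F:X\multimap Y$ is continuous. By Definition~\ref{multi-cont}, there exists a positive integer $r$ and a single-valued continuous function $f: S(X,r)\to Y$ that induces $F$, meaning
\[ F(x) \;=\; \bigcup_{x'\in E_r^{-1}(x)} \{f(x')\} \quad \text{for all } x\in X. \]

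Next I would observe that the subdivision $S(X,r)$ is finite whenever $X$ is finite. Indeed, each point $x\in X\subset\Z^n$ has exactly $r^n$ preimages under the floor map $E_r$, so $|S(X,r)| = r^n\,|X|<\infty$. Therefore the image $f(S(X,r))\subset Y$ is also finite.

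Then I would compute $F(X)$ in two ways. On one hand, since $F$ is a surjection, $F(X)=Y$, which by hypothesis is infinite. On the other hand, since the sets $E_r^{-1}(x)$ for $x\in X$ partition $S(X,r)$, we have
\[ F(X) \;=\; \bigcup_{x\in X} F(x) \;=\; \bigcup_{x\in X}\bigcup_{x'\in E_r^{-1}(x)}\{f(x')\} \;=\; f(S(X,r)), \]
which is finite. This contradiction shows that no such $r$ and $f$ can exist, so $F$ is not continuous.

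The argument is essentially a finiteness count, and I expect no real obstacle; the only point requiring a little care is verifying that the sets $E_r^{-1}(x)$ collectively exhaust $S(X,r)$, which is immediate from the definition of $S(X,r)$ as the preimage of $X$ under the floor map $E_r$.
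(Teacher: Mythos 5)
Your argument is correct: the paper states this proposition without proof (quoting it from the cited reference), and your cardinality argument --- a continuous $F$ would be induced by some single-valued $f:S(X,r)\to Y$ with $S(X,r)$ finite (of size $r^n|X|$), forcing $F(X)=f(S(X,r))$ to be finite and contradicting surjectivity onto the infinite $Y$ --- is exactly the standard proof of this fact. There are no gaps.
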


\begin{cor}
\rm{\cite{BoxSta16}}
Let $F: X \multimap Y$ be the
multivalued function
between digital images defined by
$F(x)=Y$ for all $x \in X$. If $X$ is finite and $Y$
is infinite and connected, then
$F$ is connectivity preserving but not continuous. \qed
\end{cor}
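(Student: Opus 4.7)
The plan is to obtain this corollary as an immediate combination of Proposition~\ref{1-to-all} and Proposition~\ref{finite-to-infinite}, with no new ideas required. Since $Y$ is connected and $F(x) = Y$ for every $x \in X$, Proposition~\ref{1-to-all} gives directly that $F$ is connectivity preserving; that disposes of the first conclusion in one line.

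For the second conclusion, I would first observe that $F$ is a surjection: for any $x_0 \in X$ we have $F(x_0) = Y$, so $F(X) = \bigcup_{x \in X} F(x) = Y$. Then $F$ satisfies the hypotheses of Proposition~\ref{finite-to-infinite} — a multivalued surjection from a finite image to an infinite image — so that proposition yields that $F$ is not continuous in the subdivision sense of Definition~\ref{multi-cont}.

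The only mild obstacle is bookkeeping: Proposition~\ref{finite-to-infinite} is stated for images sitting inside a common ambient $\Z^n$ with a common adjacency $\kappa$, whereas the corollary makes no such hypothesis explicit. This is not a genuine issue, since any finite digital image $X$ can be embedded into the same $\Z^n$ that contains $Y$ without altering its adjacency structure, and the continuity of $F$ in the sense of Definition~\ref{multi-cont} is what we wish to rule out anyway. Thus the entire proof should amount to two sentences — one citation of Proposition~\ref{1-to-all}, one verification of surjectivity followed by a citation of Proposition~\ref{finite-to-infinite}.
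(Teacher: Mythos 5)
Your proposal is correct and matches the paper's intent exactly: the corollary is stated immediately after Proposition~\ref{1-to-all} and Proposition~\ref{finite-to-infinite} precisely so that it follows by combining them, which is what you do. The surjectivity observation and the remark about the ambient lattice are both sound, so nothing further is needed.
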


Examples of connectivity preserving but not continuous multivalued functions on finite spaces are given in \cite{BoxSta16}.

\subsection{Other notions of multivalued continuity}
Other notions of continuity have been given
for multivalued functions between graphs (equivalently,
between digital images). We have the following.

\begin{definition}
\rm{~\cite{Tsaur}}
\label{Tsaur-def}
Let $F: X \multimap Y$ be a multivalued function between
digital images.
\begin{itemize}
\item $F$ has {\em weak continuity} if for each pair of
      adjacent $x,y \in X$, $f(x)$ and $f(y)$ are adjacent
      subsets of $Y$.
\item $F$ has {\em strong continuity} if for each pair of
      adjacent $x,y \in X$, every point of $f(x)$ is adjacent
      or equal to some point of $f(y)$ and every point of 
      $f(y)$ is adjacent or equal to some point of $f(x)$.
     \qed
\end{itemize}
\end{definition}

\begin{prop}
\label{mild-and-weak}
\rm{\cite{BoxSta16}}
Let $F: X \multimap Y$ be a multivalued function between
digital images. Then $F$ is connectivity preserving if and
only if $F$ has weak continuity and for all $x \in X$,
$F(x)$ is connected. \qed
\end{prop}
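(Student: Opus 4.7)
The proposition is essentially a re-indexing of Theorem~\ref{mildadj} using the terminology of Definition~\ref{Tsaur-def}, so my plan is to reduce it directly to those two prior results rather than argue from first principles.

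First I would observe that Theorem~\ref{mildadj} characterizes connectivity preservation of $F:X\multimap Y$ by exactly two conditions: (i) for every $x\in X$, $F(x)$ is connected in $Y$, and (ii) for every pair of adjacent $x,x'\in X$, the sets $F(x)$ and $F(x')$ are adjacent in $Y$. Next I would note that condition (ii) is, word for word, the defining clause of weak continuity from Definition~\ref{Tsaur-def}. Thus the conjunction ``$F$ is weakly continuous and $F(x)$ is connected for every $x\in X$'' is literally the conjunction of (i) and (ii).

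With that identification in hand, both directions are immediate. For the forward direction, assume $F$ is connectivity preserving; apply Theorem~\ref{mildadj} to extract (i) and (ii), then rename (ii) as weak continuity. For the converse, assume $F$ is weakly continuous and each $F(x)$ is connected; weak continuity gives (ii), the pointwise hypothesis gives (i), and Theorem~\ref{mildadj} then yields connectivity preservation.

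There is no real obstacle here; the content of the proposition is a definitional bookkeeping step that isolates the ``adjacency of images'' half of connectivity preservation and names it weak continuity. The only care needed is to make the citation chain explicit so the reader sees that nothing beyond Theorem~\ref{mildadj} and Definition~\ref{Tsaur-def} is being invoked. Accordingly my written proof would be two or three sentences long, simply lining up the clauses.
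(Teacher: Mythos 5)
Your reduction is correct: Theorem~\ref{mildadj} characterizes connectivity preservation by precisely the two clauses you list, and the second clause coincides verbatim with the definition of weak continuity in Definition~\ref{Tsaur-def} (both use the same notion of adjacent subsets), so the proposition is an immediate restatement. The paper itself offers no proof --- the result is quoted from \cite{BoxSta16} with a terminal \verb|\qed| --- but your two-sentence argument is exactly the intended one, and since Theorem~\ref{mildadj} is stated earlier as an independent prior result, there is no circularity in invoking it.
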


\begin{exl}
\rm{\cite{BoxSta16}}
\label{pt-images-discon}
If $F: [0,1]_{\Z} \multimap [0,2]_{\Z}$ is defined by
$F(0)=\{0,2\}$, $F(1)=\{1\}$, then $F$ has both weak and
strong continuity. Thus a multivalued function between
digital images that has weak or strong continuity need not
have connected point-images. By Theorem~\ref{mildadj} and
Proposition~\ref{pt-images-connected} it
follows that neither having weak continuity nor having
strong continuity implies that a multivalued function is
connectivity preserving or continuous.
$\Box$
\end{exl}

\begin{exl}
\rm{\cite{BoxSta16}}
Let $F: [0,1]_{\Z} \multimap [0,2]_{\Z}$ be defined by
$F(0)=\{0,1\}$, $F(1)=\{2\}$. Then $F$ is continuous and
has weak continuity but
does not have strong continuity. $\Box$
\end{exl}

\begin{prop}
\rm{\cite{BoxSta16}}
Let $F: X \multimap Y$ be a multivalued function between
digital images. If $F$ has strong continuity and for
each $x \in X$, $F(x)$ is connected, then $F$ is
connectivity preserving. \qed
\end{prop}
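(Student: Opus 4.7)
The plan is to reduce the claim to Proposition~\ref{mild-and-weak} by showing that strong continuity (together with connected point-images, although the latter is only needed for the final invocation of the proposition) implies weak continuity. Since connectivity of point-images is already hypothesized, the only substantive step is the strong-implies-weak implication.

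Concretely, I would first recall that by Proposition~\ref{mild-and-weak}, it suffices to prove that $F$ has weak continuity. To that end, fix any pair of adjacent points $x, y \in X$. Since $F(x)$ is connected, it is in particular nonempty, so I may pick some $a \in F(x)$. By the definition of strong continuity, every point of $F(x)$ is equal or adjacent to some point of $F(y)$; applying this to $a$ yields a point $b \in F(y)$ with $a$ and $b$ equal or adjacent. This witnesses the adjacency of the subsets $F(x)$ and $F(y)$ as defined in Section~\ref{prelims}, establishing weak continuity. Invoking Proposition~\ref{mild-and-weak} with the hypothesis that each $F(x)$ is connected then gives that $F$ is connectivity preserving.

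There is no real obstacle here: the argument is a direct unwinding of definitions, and the pointwise connectedness hypothesis is used solely to ensure that the point-images are nonempty (so that a witness $a \in F(x)$ exists) and to satisfy the second clause of Proposition~\ref{mild-and-weak}. The only subtlety worth flagging in the write-up is the observation that the ``every point of $F(y)$ is adjacent or equal to some point of $F(x)$'' half of the strong continuity condition is not actually needed for this implication; a one-sided version of strong continuity already suffices for weak continuity.
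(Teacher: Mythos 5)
Your argument is correct: strong continuity immediately yields weak continuity (pick any $a\in F(x)$, which exists since the point-images are nonempty, and strong continuity hands you a witness $b\in F(y)$ equal or adjacent to $a$), and then Proposition~\ref{mild-and-weak} finishes the job. The paper itself states this proposition without proof, citing \cite{BoxSta16}, so there is no in-paper argument to compare against; your reduction via Proposition~\ref{mild-and-weak} is the natural one and is essentially the intended proof. Your side remark that only one half of the strong continuity condition is needed is a fair observation and does not affect correctness.
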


The following shows that not requiring the images of
points to be connected yields topologically unsatisfying 
consequences for weak and strong continuity.

\begin{exl}
\rm{\cite{BoxSta16}}
Let $X$ and $Y$ be nonempty digital images. Let
the multivalued function $f: X \multimap Y$ be defined by
$f(x)=Y$ for all $x \in X$.
\begin{itemize}
\item $f$ has both weak and strong continuity.
\item $f$ is connectivity preserving if and only if $Y$ is
      connected. \qed
\end{itemize}
\end{exl}

As a specific example~\cite{BoxSta16} consider $X= \{0\} \subset \Z$ and $Y = \{0,2\}$, all with $c_1$ adjacency. Then the function $F:X \multimap Y$ with $F(0) = Y$ has both weak and strong continuity, even though it maps a connected image surjectively onto a disconnected image.

\subsection{Shy maps and their inverses}
\begin{definition}
\label{shy-def}
\cite{Boxer05}
Let $f: X \rightarrow Y$ be a
continuous surjection of digital images. We say $f$ is
{\em shy} if
\begin{itemize}
\item for each $y \in Y$, $f^{-1}(y)$ is connected, and
\item for every $y_0,y_1 \in Y$ such that $y_0$ and $y_1$ are
      adjacent, $f^{-1}(\{y_0,y_1\})$ is
      connected. \qed
\end{itemize}
\end{definition}

Shy maps induce surjections on fundamental groups
~\cite{Boxer05}.
Some relationships between shy maps $f$ and their inverses
$f^{-1}$ as multivalued functions were studied in
~\cite{Boxer14,BoxSta16,Boxer16}.
We have the following.

\begin{thm}
\label{shy-thm}
\rm{\cite{BoxSta16,Boxer16}}
Let $f: X \to Y$ be a 
continuous surjection between digital images.
Then the following are equivalent.
\begin{itemize}
\item f is a shy map.
\item For every connected $Y_0 \subset Y$ , $f^{-1}(Y_0)$
      is a connected subset of $X$.
\item $f^{-1}: Y \multimap X$ is a connectivity preserving multi-valued function.
\item $f^{-1}: Y \multimap X$ is a multi-valued function with weak continuity   
      such that for all $y \in Y$, $f^{-1}(y)$ is a connected subset of $X$. 
      \qed
\end{itemize}
\end{thm}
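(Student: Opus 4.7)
The plan is to reduce the four-way equivalence to a short cycle by invoking earlier results, rather than proving all twelve directed implications separately. Label the listed statements (a), (b), (c), (d) in order. The equivalence (c) $\iff$ (d) is immediate from Proposition~\ref{mild-and-weak}. The equivalence (b) $\iff$ (c) is essentially a reading of Definition~\ref{mildly} applied to $F = f^{-1}$, since for the multivalued map $f^{-1}$ the value $f^{-1}(Y_0) = \bigcup_{y \in Y_0} f^{-1}(y)$ coincides with the ordinary set preimage of $Y_0$. So the only substantive task is (a) $\iff$ (d).

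For (a) $\implies$ (d), the connectedness of each fiber $f^{-1}(y)$ is one of the two defining conditions of a shy map, so it suffices to verify weak continuity of $f^{-1}$. Given adjacent $y_0, y_1 \in Y$, the shy hypothesis says $f^{-1}(\{y_0,y_1\}) = f^{-1}(y_0) \cup f^{-1}(y_1)$ is connected. Because $y_0 \neq y_1$ these two preimages are disjoint, and they are nonempty by surjectivity of $f$. A connected set in a digital image cannot split as a disjoint union of two nonempty subsets with no adjacency between them, so $f^{-1}(y_0)$ and $f^{-1}(y_1)$ are adjacent in the sense of Section~\ref{prelims}, which is exactly weak continuity.

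For (d) $\implies$ (a), the fiber condition of (d) is precisely the first shy condition. For the second, given adjacent $y_0, y_1 \in Y$, weak continuity of $f^{-1}$ yields that $f^{-1}(y_0)$ and $f^{-1}(y_1)$ are adjacent, and each is connected by hypothesis. The remark in Section~\ref{prelims} that a finite union of connected adjacent sets is connected then gives that $f^{-1}(\{y_0, y_1\}) = f^{-1}(y_0) \cup f^{-1}(y_1)$ is connected, completing the verification that $f$ is shy.

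There is no serious obstacle here: the one non-cosmetic step is the observation that two disjoint nonempty connected subsets of a digital image whose union is connected must be adjacent, which is where the direction (a) $\implies$ (d) acquires its content. Everything else is bookkeeping against the cited definitions and Proposition~\ref{mild-and-weak}.
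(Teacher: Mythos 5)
The paper states Theorem~\ref{shy-thm} as a result quoted from \cite{BoxSta16,Boxer16} and gives no proof of it, so there is no internal argument to compare against; your proposal must be judged on its own. It is correct and complete: the reductions (c)~$\Leftrightarrow$~(d) via Proposition~\ref{mild-and-weak} and (b)~$\Leftrightarrow$~(c) via Definition~\ref{mildly} (noting that the multivalued image $f^{-1}(Y_0)$ is the ordinary set preimage) are exactly right, and the two substantive implications between (a) and (d) are sound --- in particular, the key observation that a connected digital image cannot be partitioned into two nonempty, mutually non-adjacent pieces (follow a path between the two pieces and examine the step at which it crosses from one to the other) correctly yields weak continuity of $f^{-1}$ from shyness, while the converse correctly invokes the remark in Section~\ref{prelims} that a union of adjacent connected sets is connected.
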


\subsection{Other tools}
Other terminology we use includes the following.
Given a digital image $(X,\kappa) \subset \Z^n$ and $x \in X$, the set of points adjacent to $x \in \Z^n$ and the
neighborhood of $x$ in $\Z^n$
are, respectively,
\[N_{\kappa}(x) = \{y \in \Z^n \, | \, y \mbox{ is }
    \kappa\mbox{-adjacent to }x\},\]
\[N_{\kappa}^*(x) = N_{\kappa}(x) \cup \{x\}.
\]

\section{Extensions of normal product adjacency}
In this section, we define extensions of the normal product adjacency, as follows.

\begin{definition}
\label{NP_u-def}
Let $u$ and $v$ be positive integers, $1 < u \leq v$. Let $\{(X_i,\kappa_i)\}_{i=1}^v$ be
digital images. Let $NP_u(\kappa_1, \ldots, \kappa_u)$ be the adjacency
defined on the Cartesian product $\Pi_{i=1}^v X_i$ as follows.
For $x_i,x_i' \in X_i$, $p=(x_1, \ldots, x_v)$ and $q=(x_1', \ldots, x_v')$ are
$NP_u(\kappa_1, \ldots, \kappa_u)$-adjacent if and only if
\begin{itemize}
\item For at least 1 and at most $u$ indices $i$, $x_i$ and $x_i'$ are
      $\kappa_i$-adjacent, and
\item for all other indices $i$, $x_i=x_i'$. \qed
\end{itemize}
\end{definition}

Throughout this paper, the reader should be careful to
note that some of our results for
$NP_u(\kappa_1, \ldots, \kappa_v)$ are stated for
all $u \in \{1,\ldots,v\}$ and others are stated
only for $u=1$ or $u=v$.

\begin{prop}
\label{NP_2}
$NP(\kappa, \lambda) = NP_2(\kappa,\lambda)$. I.e., given $x,x' \in (X,\kappa)$
and $y,y' \in (Y,\lambda)$, $p=(x,y)$ and $p'=(x',y')$ are 
$NP(\kappa, \lambda)$-adjacent in $X \times Y$ if and only if $p$ and $p'$
are $NP_2(\kappa, \lambda)$-adjacent.
\end{prop}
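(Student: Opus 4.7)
The plan is to unwind both definitions when the number of factors is $v=2$ and observe that the case analysis for $NP_2$ degenerates exactly into the three bullets defining $NP$. This is essentially bookkeeping, so I would present it as a direct verification rather than invoking any auxiliary machinery.

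First, I would fix notation by writing $p=(x,y)$ and $p'=(x',y')$ in $X \times Y$, and list the two coordinate indices $i=1,2$ so that ``index $i=1$'' refers to the $X$-coordinate (governed by $\kappa$) and ``index $i=2$'' to the $Y$-coordinate (governed by $\lambda$). Then I would expand Definition~\ref{NP_u-def} with $u=v=2$: the requirement becomes that the number $k$ of indices at which the coordinates are adjacent satisfies $1 \le k \le 2$, while at every other index the coordinates are equal.

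Next I would split on $k$. If $k=1$, then either (a) the $X$-coordinates are $\kappa$-adjacent and $y=y'$, or (b) $x=x'$ and the $Y$-coordinates are $\lambda$-adjacent; these reproduce exactly the second and first bullets of Definition~\ref{NP-def}. If $k=2$, then both pairs of coordinates are adjacent in their respective factors, reproducing the third bullet of Definition~\ref{NP-def}. Conversely, each of the three bullets in Definition~\ref{NP-def} corresponds to a valid choice of $k \in \{1,2\}$ together with the required equalities at the remaining indices, so the two adjacency relations coincide.

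There is no real obstacle here; the only thing to be careful about is making sure that ``at least 1 and at most $u$'' with $u=2$ genuinely covers both the $k=1$ and $k=2$ cases, and that in the $k=1$ subcase the requirement ``for all other indices $i$, $x_i = x_i'$'' correctly forces the unchanged coordinate to be equal (not merely adjacent). That observation makes the split into cases (a) and (b) above exhaustive and disjoint, and completes the equivalence.
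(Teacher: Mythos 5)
Your proposal is correct and is essentially the paper's own argument: the paper simply states that the proposition ``follows immediately from Definitions~\ref{NP-def} and~\ref{NP_u-def},'' and your case split on $k\in\{1,2\}$ is exactly the unwinding that justifies that claim. No further comment is needed.
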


\begin{proof} This follows immediately from Definitions~\ref{NP-def}
and~\ref{NP_u-def}.
\end{proof}

\begin{thm}
\label{2prod}
\rm{~\cite{BoxKar12}} For
$X \in \Z^m$, $Y \in Z^n$, $NP_2(c_m,c_n)=c_{m+n}$,
i.e., the normal product adjacency for
$(X,c_m) \times (Y,c_n)$ coincides with the $c_{m+n}$-adjacency for $X \times Y$. \qed
\end{thm}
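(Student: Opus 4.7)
The plan is to unpack the definitions on both sides and compare them. The key observation is that, in $\Z^k$, the $c_k$-adjacency places no restriction on \emph{how many} coordinates differ (at most $k$ is automatic), so two distinct points of $\Z^k$ are $c_k$-adjacent if and only if every coordinate difference lies in $\{-1,0,1\}$. Applying this for $k=m$, $k=n$, and $k=m+n$ turns the proposition into an elementary check.

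First I would prove the forward inclusion. Suppose $p=(x,y)$ and $p'=(x',y')$ are $NP_2(c_m,c_n)$-adjacent in $X\times Y \subset \Z^{m+n}$. By Definition~\ref{NP_u-def}, the three bullets of Definition~\ref{NP-def} apply: either $x=x'$ and $y,y'$ are $c_n$-adjacent, or $y=y'$ and $x,x'$ are $c_m$-adjacent, or both nontrivial adjacencies hold. In each case $p\ne p'$, and in each case the coordinate-wise differences between $p$ and $p'$ all lie in $\{-1,0,1\}$ (the equal factor contributes only zeros, while the $c_m$- or $c_n$-adjacent factor contributes differences in $\{-1,0,1\}$ by the definition of $c_u$-adjacency). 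Hence $p$ and $p'$ are $c_{m+n}$-adjacent in $\Z^{m+n}$.

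Next I would prove the converse. Suppose $p$ and $p'$ are $c_{m+n}$-adjacent in $\Z^{m+n}$, so $p\ne p'$ and every coordinate difference lies in $\{-1,0,1\}$. Split the coordinates into the first $m$ (giving $x,x'\in\Z^m$) and the last $n$ (giving $y,y'\in\Z^n$). I would then case-split on which of $x=x'$ and $y=y'$ holds. If $x=x'$ then $y\ne y'$ and all $n$ of its coordinate differences are in $\{-1,0,1\}$, so $y,y'$ are $c_n$-adjacent; this is the first bullet of Definition~\ref{NP_u-def} with a single nontrivial index block. The symmetric case gives the second bullet. If $x\ne x'$ and $y\ne y'$, then $x,x'$ are $c_m$-adjacent and $y,y'$ are $c_n$-adjacent, matching the third bullet. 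In all subcases $p,p'$ are $NP_2(c_m,c_n)$-adjacent.

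There is no real obstacle here; the only thing to be careful about is the boundary behavior of the ``at most $u$'' clause in the definition of $c_u$-adjacency, which is why the statement fixes $u=m$ in $\Z^m$, $u=n$ in $\Z^n$, and $u=m+n$ in $\Z^{m+n}$ — in each of these maximal cases the count constraint is vacuous, so the adjacency reduces cleanly to ``distinct with all coordinate differences in $\{-1,0,1\}$,'' which is precisely what makes the two definitions coincide.
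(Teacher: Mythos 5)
Your proof is correct. Note that the paper itself gives no argument for this theorem; it is quoted from the reference [BoxKar12] with only a \qed. Your verification is the natural definitional unpacking that such a proof would consist of, and the one observation that does all the work --- that for the maximal parameter $u=k$ the ``at most $u$ indices'' clause is vacuous, so $c_k$-adjacency in $\Z^k$ reduces to ``distinct points with every coordinate difference in $\{-1,0,1\}$'' --- is exactly the right reduction; both inclusions then follow by splitting the $m+n$ coordinates into the first $m$ and last $n$ as you do. No gaps.
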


Examples are also given in~\cite{BoxKar12} that show that if
$X \in \Z^m$, $Y \in Z^n$, and $a< m$ or $b < n$,
then $NP_2(c_a,c_b) \neq c_{a+b}$.

The following shows that $NP_v$ obeys a recursive property.

\begin{prop}
\label{inductive-equiv} Let $v>2$. Then
\[ NP_v(\kappa_1, \ldots, \kappa_v) = NP_2(NP_{v-1}(\kappa_1, \ldots, \kappa_{v-1}), \kappa_v). \]
\end{prop}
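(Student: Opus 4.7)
The approach is straightforward definition-chasing: I unpack both sides of the claimed equality using Definitions~\ref{NP-def} and~\ref{NP_u-def} and check that they describe the same adjacency on $\prod_{i=1}^v X_i$. Fix distinct points $p = (x_1, \ldots, x_v)$ and $q = (x_1', \ldots, x_v')$, and let $D = \{\, i \in \{1, \ldots, v\} : x_i \ne x_i' \,\}$. By Definition~\ref{NP_u-def}, $p$ and $q$ are $NP_v(\kappa_1, \ldots, \kappa_v)$-adjacent if and only if $D \ne \emptyset$ and $x_i$ is $\kappa_i$-adjacent to $x_i'$ for every $i \in D$; note that the ``at most $v$'' clause is automatic.

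For the right-hand side, I regard $\prod_{i=1}^v X_i$ as $\bigl(\prod_{i=1}^{v-1} X_i\bigr) \times X_v$ and write $\bar p = (x_1, \ldots, x_{v-1})$ and $\bar q = (x_1', \ldots, x_{v-1}')$. Setting $\kappa = NP_{v-1}(\kappa_1, \ldots, \kappa_{v-1})$, Proposition~\ref{NP_2} together with Definition~\ref{NP-def} says that $p$ and $q$ are $NP_2(\kappa, \kappa_v)$-adjacent iff one of the three clauses of Definition~\ref{NP-def} holds, which I label (i)~$\bar p = \bar q$ with $x_v$ and $x_v'$ being $\kappa_v$-adjacent; (ii)~$\bar p$ and $\bar q$ being $\kappa$-adjacent with $x_v = x_v'$; or (iii)~$\bar p$ and $\bar q$ being $\kappa$-adjacent with $x_v$ and $x_v'$ being $\kappa_v$-adjacent. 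Unpacking the $\kappa$-adjacency of $\bar p$ and $\bar q$ via Definition~\ref{NP_u-def} reduces the condition to a statement about $D' := D \cap \{1, \ldots, v-1\}$: namely, $D' \ne \emptyset$ and $x_i$ is $\kappa_i$-adjacent to $x_i'$ for every $i \in D'$.

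I then match the two characterizations by splitting on whether $v \in D$. If $v \notin D$ (so $x_v = x_v'$), only clause (ii) can apply, and it does so precisely when $D = D'$ is nonempty with the required adjacencies, which matches the LHS. If $v \in D$, clause (ii) is excluded; clause (i) applies exactly when $D = \{v\}$ with $x_v$ and $x_v'$ being $\kappa_v$-adjacent, and clause (iii) applies exactly when in addition $D' \ne \emptyset$ with the required adjacencies in $D'$. Taking the disjunction of (i) and (iii) gives: $x_v$ is $\kappa_v$-adjacent to $x_v'$ and $x_i$ is $\kappa_i$-adjacent to $x_i'$ for every $i \in D'$, which is again the LHS condition. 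I expect no real obstacle here; the only thing to watch is the bookkeeping of the ``at least one'' and ``at most $u$'' clauses as the last factor is peeled off, and in particular that the upper bounds in $NP_v$ and $NP_{v-1}$ are vacuous given the numbers of factors involved.
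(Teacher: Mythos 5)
Your proof is correct and takes essentially the same route as the paper's: unpack Definition~\ref{NP_u-def} on both sides and split into cases according to whether the last coordinate changes, observing that the ``at most $u$'' bounds are vacuous when $u$ equals the number of factors. If anything, your explicit handling of clause (i) (the case $D=\{v\}$, where the first $v-1$ coordinates all agree) is slightly more careful than the paper's bullet list, which glosses over that subcase.
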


\begin{proof} Let $x_i, x_i' \in X_i$ for $1 \leq i \leq v$.
Then $p=(x_1, \ldots, x_v)$ and $p'= (x_1', \ldots, x_v')$ are
$NP_v(\kappa_1, \ldots, \kappa_v)$-adjacent if and only if
for at least 1 and at most $v$ indices $i$, $x_i$ and $x_i'$ are
$\kappa_i$-adjacent and for all other indices $i$, $x_i=x_i'$. Hence
$p=(x_1, \ldots, x_v)$ and $p'= (x_1', \ldots, x_v')$ are
$NP_u(\kappa_1, \ldots, \kappa_u)$-adjacent if and only if either
\begin{itemize}
\item $x_i$ and $x_i'$ are $\kappa_i$-adjacent for from 1 to $u$ indices among
      $\{1, \ldots, v-1\}$, $x_i=x_i'$ for all other indices among
      $\{1, \ldots, v-1\}$, and $x_v=x_v'$; or
\item $x_i$ and $x_i'$ are $\kappa_i$-adjacent for from 1 to $u-1$ indices among
      $\{1, \ldots, v-1\}$, $x_i=x_i'$ for all other indices among
      $\{1, \ldots, v-1\}$, and $x_v$ and $x_v'$ are $\kappa_v$-adjacent.
\end{itemize}
Thus, $p=(x_1, \ldots, x_v)$ and $p'= (x_1', \ldots, x_v')$ are
$NP_u(\kappa_1, \ldots, \kappa_v)$-adjacent if and only if $p$ and $p'$ are
$NP_2(NP_{u-1}(\kappa_1, \ldots, \kappa_{v-1}), \kappa_v)$-adjacent.
\end{proof}

Notice Proposition~\ref{inductive-equiv} may fail
to extend to $NP_u(\kappa_1, \ldots, \kappa_v)$ if $u < v$, as shown in the following (suggested by an example
in~\cite{BoxKar12}).

\begin{exl}
Let $x_i,x_i' \in (X_i,\kappa_i)$,
$i \in \{1,2,3\}$. Suppose
$x_1$ and $x_1'$ are $\kappa_1$-adjacent,
$x_2$ and $x_2'$ are $\kappa_2$-adjacent,
and $x_3=x_3'$. Then
$(x_1,x_2,x_3)$ and $(x_1',x_2',x_3')$ 
are $NP_2(\kappa_1,\kappa_2,\kappa_3)$-adjacent in $X_1 \times X_2 \times X_3$, but
$(x_1,x_2)$ and $(x_1',x_2')$ are not
$NP_1(\kappa_1,\kappa_2)$-adjacent in
$X_1 \times X_2$. Thus,
\[ NP_2(\kappa_1,\kappa_2,\kappa_3) \neq
   NP_2(NP_1(\kappa_1,\kappa_2),\kappa_3).
\qed \]
\end{exl}

\begin{thm}
Let $f,g: (X,\kappa) \to (Y,\lambda)$ be
functions. 
Let $H: X \times [0,m]_{\Z} \to Y$ be
a function such that $H(x,0)=f(x)$ and
$H(x,m)=g(x)$ for all $x \in X$. Then
$H$ is a homotopy if and only if
$H$ is $(NP_1(\kappa,c_1),\lambda)$-continuous.
\end{thm}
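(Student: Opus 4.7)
The plan is to reduce both directions to Theorem~\ref{cont-by-adj} (the adjacency-based characterization of continuity) and unwrap the definition of $NP_1(\kappa,c_1)$. By Definition~\ref{NP_u-def} with $u=1$, $v=2$, two points $(x,t)$ and $(x',t')$ in $X \times [0,m]_{\Z}$ are $NP_1(\kappa,c_1)$-adjacent if and only if exactly one of the following holds: either $x=x'$ and $t, t'$ are $c_1$-adjacent, or $x, x'$ are $\kappa$-adjacent and $t=t'$. The boundary condition $H(x,0)=f(x)$, $H(x,m)=g(x)$ is part of the hypothesis, so what remains is to show that the two induced-function continuity conditions in Definition~\ref{htpy-2nd-def} together are equivalent to $(NP_1(\kappa,c_1),\lambda)$-continuity of $H$.

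For the forward direction, assume $H$ is a homotopy and let $(x,t), (x',t')$ be $NP_1(\kappa,c_1)$-adjacent. In the first case ($x=x'$, $t, t'$ $c_1$-adjacent), I would apply Theorem~\ref{cont-by-adj} to the continuous map $H_x$ to conclude $H(x,t)=H_x(t)$ and $H(x,t')=H_x(t')$ are equal or $\lambda$-adjacent. In the second case ($x, x'$ $\kappa$-adjacent, $t=t'$), I would similarly apply Theorem~\ref{cont-by-adj} to $H_t$. Then, invoking Theorem~\ref{cont-by-adj} in the opposite direction, this shows $H$ is $(NP_1(\kappa,c_1),\lambda)$-continuous.

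For the converse, assume $H$ is $(NP_1(\kappa,c_1),\lambda)$-continuous. To see $H_x$ is $(c_1,\lambda)$-continuous for fixed $x \in X$, I would take $c_1$-adjacent $t, t' \in [0,m]_{\Z}$; then $(x,t)$ and $(x,t')$ are $NP_1(\kappa,c_1)$-adjacent by the definition above, so $H(x,t)$ and $H(x,t')$ are equal or $\lambda$-adjacent, and Theorem~\ref{cont-by-adj} gives the continuity of $H_x$. The argument for $H_t$ being $(\kappa,\lambda)$-continuous is symmetric: for $\kappa$-adjacent $x, x' \in X$, the pair $(x,t), (x',t)$ is $NP_1(\kappa,c_1)$-adjacent.

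There is no real obstacle here; the only thing to be careful about is that $NP_1$ (rather than $NP_2 = NP$) is the correct adjacency, since a homotopy does not require the two induced directions to vary simultaneously — only one coordinate changes between adjacent pairs, which is exactly what $NP_1(\kappa,c_1)$ encodes. Using $NP_2$ instead would impose an extra diagonal adjacency that the definition of homotopy does not control.
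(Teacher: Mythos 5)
Your proposal is correct and follows essentially the same route as the paper's proof: both unwrap $NP_1(\kappa,c_1)$-adjacency into the two cases (equal first coordinates with $c_1$-adjacent times, or $\kappa$-adjacent first coordinates with equal times) and match these against the induced-function conditions of Definition~\ref{htpy-2nd-def} via Theorem~\ref{cont-by-adj}. The only point you gloss over, which the paper states explicitly, is that the converse direction also requires checking that $f$ and $g$ themselves are continuous (the definition of homotopy presupposes this); since $f = H_0$ and $g = H_m$, this is immediate from the $H_t$ condition you do establish, so the gap is cosmetic.
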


\begin{proof} In the following, we
consider arbitrary $(NP_1(\kappa,c_1),\lambda)$-adjacent $(x,t)$ and $(x',t')$
in $X \times [0,m]_{\Z}$ with $x,x' \in X$ and $t,t' \in [0,m]_{\Z}$. Such
points offer the following cases.
\begin{enumerate}
\item $x$ and $x'$ are  $\kappa$-adjacent and $t=t'$; or
\item $x=x'$ and $t$ and $t'$ are
      $c_1$-adjacent, i.e.,
      $|t-t'|=1$.
\end{enumerate}

Let $H$ be a homotopy.
Then $f$ and $g$ are continuous, and
given $(NP_1(\kappa,c_1),\lambda)$-adjacent $(x,t)$ and $(x',t')$
in $X \times [0,m]_{\Z}$, we consider the cases listed above.
\begin{itemize}
\item In case~1, since $H$ is a 
      homotopy, $H(x,t)$ and $H(x',t)=H(x',t')$ are equal or $\lambda$-adjacent.
\item In case~2, since $H$ is a 
      homotopy, $H(x,t)$ and $H(x',t')=H(x,t')$ are equal or $\lambda$-adjacent.
\end{itemize}
Therefore, $H$ is $(NP_1(\kappa,c_1),\lambda)$-continuous.

Suppose $H$ is $(NP_1(\kappa,c_1),\lambda)$-continuous. Then for
$\kappa$-adjacent $x,x'$,
$f(x)=H(x,0)$ and $H(x',0)=f(x')$
are equal or $\lambda$-adjacent, so
$f$ is continuous. Similarly, $g(x)=H(x,m)$ and $g(x')=H(x',m)$ 
are equal or $\lambda$-adjacent, so
$g$ is
continuous. Also, the continuity of $H$ implies that $H(x,t)$ and
$H(x',t)$ must be equal or $\lambda$-adjacent, so the induced function $H_t$ is $(\kappa,\lambda)$-continuous. 
For $c_1$-adjacent $t,t'$, the continuity of $H$ implies that
$H(x,t)$ and $H(x,t')$ are equal or
$\lambda$-adjacent, so the induced
function $H_x$ is continuous. By
Definition~\ref{htpy-2nd-def}, $H$ is
a homotopy.
\end{proof}

\section{$NP_u$ and maps on products}
Given functions $f_i: (X_i, \kappa_i) \to (Y_i, \lambda_i)$,
$1 < i \leq v$, the function
\[ \Pi_{i=1}^v f_i: (\Pi_{i=1}^v X_i, NP_u(\kappa_1, \ldots, \kappa_v)) \to (\Pi_{i=1}^v Y_i, NP_u(\lambda_1, \ldots, \lambda_v))\]
is defined by
\[ \Pi_{i=1}^v f_i(x_1, \ldots, x_v) = (f(x_1), \ldots, f(x_v)), \mbox{ where } x_i \in X_i. \]

The following generalizes a result
in~\cite{Boxer16,BoxKar12}.
\begin{thm}
\label{prod-cont}
Let $f_i: (X_i, \kappa_i) \to (Y_i, \lambda_i)$, $1 < i \leq v$.
Then the product map
\[ f=\Pi_{i=1}^v f_i: (\Pi_{i=1}^v X_i, NP_v(\kappa_1, \ldots, \kappa_v)) \to (\Pi_{i=1}^v Y_i, NP_v(\lambda_1, \ldots, \lambda_v)) \]
is continuous if and only if each $f_i$ is continuous.
\end{thm}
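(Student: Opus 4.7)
The plan is to use Theorem~\ref{cont-by-adj} throughout, which lets me reduce continuity of any map to a purely combinatorial check on adjacent pairs, together with Definition~\ref{NP_u-def} which characterizes $NP_v$-adjacency by specifying the set of coordinates on which a pair differs. I would also assume each $X_i$ is nonempty, since otherwise $\Pi X_i$ is empty and the statement is vacuous on one side.

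For the ``only if'' direction, I fix an index $i_0$ and suppose $x_{i_0},x_{i_0}' \in X_{i_0}$ are $\kappa_{i_0}$-adjacent. For every $j \neq i_0$ I pick an arbitrary $x_j \in X_j$, and form the points $p=(x_1,\ldots,x_{i_0},\ldots,x_v)$ and $p'=(x_1,\ldots,x_{i_0}',\ldots,x_v)$ that agree in all coordinates other than $i_0$. By Definition~\ref{NP_u-def}, $p$ and $p'$ are $NP_v(\kappa_1,\ldots,\kappa_v)$-adjacent (exactly one index differs, and on it they are $\kappa_{i_0}$-adjacent). Applying continuity of $f$ via Theorem~\ref{cont-by-adj}, $f(p)$ and $f(p')$ are equal or $NP_v(\lambda_1,\ldots,\lambda_v)$-adjacent. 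Since their coordinates outside $i_0$ coincide, the only place they can differ is the $i_0$-th coordinate, and there they must be equal or $\lambda_{i_0}$-adjacent, proving continuity of $f_{i_0}$.

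For the ``if'' direction, suppose each $f_i$ is continuous. Take any $NP_v$-adjacent points $p=(x_1,\ldots,x_v)$ and $p'=(x_1',\ldots,x_v')$, and let $S \subseteq \{1,\ldots,v\}$ be the nonempty set of indices on which $x_i$ and $x_i'$ are $\kappa_i$-adjacent, so $x_i=x_i'$ for $i \notin S$. For $i \in S$, continuity of $f_i$ gives that $f_i(x_i)$ and $f_i(x_i')$ are equal or $\lambda_i$-adjacent; for $i \notin S$ we trivially have $f_i(x_i)=f_i(x_i')$. Let $T \subseteq S$ be the set of indices where the images genuinely differ. If $T=\emptyset$, then $f(p)=f(p')$; otherwise $1 \le |T| \le v$ and $f_i(x_i),f_i(x_i')$ are $\lambda_i$-adjacent on $T$ and equal elsewhere, so $f(p)$ and $f(p')$ are $NP_v(\lambda_1,\ldots,\lambda_v)$-adjacent. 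Invoking Theorem~\ref{cont-by-adj} again gives continuity of $f$.

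Neither direction presents a real obstacle; the argument is essentially a bookkeeping check that the ``set of differing coordinates'' behaves well under the product map. The one mild subtlety is in the forward direction: one must observe that $T$ can be a proper subset of $S$ (coincidences $f_i(x_i)=f_i(x_i')$ may occur even when $x_i \neq x_i'$), but since $T \subseteq S$ and $|S| \le v$, the cardinality bound needed by Definition~\ref{NP_u-def} is automatically preserved, and the case $T=\emptyset$ is handled by the ``equal or adjacent'' clause of Theorem~\ref{cont-by-adj}.
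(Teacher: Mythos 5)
Your proof is correct and follows essentially the same route as the paper's: both reduce continuity of all maps involved to the pointwise adjacency criterion of Theorem~\ref{cont-by-adj} and then check coordinates against Definition~\ref{NP_u-def}. In the ``only if'' direction you perturb one coordinate at a time while holding the others fixed, which is in fact slightly more careful than the paper's version of that step, which simultaneously takes $\kappa_i$-adjacent pairs in every factor and therefore tacitly assumes such pairs exist in each $X_i$.
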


\begin{proof} In the following, we
let $p=(x_1, \ldots, x_v)$ and $p'=(x_1', \ldots, x_v')$, where
$x_i, x_i' \in X_i$.

Suppose each $f_i$ is continuous and $p$ and $p'$ are $NP_v(\kappa_1, \ldots, \kappa_v)$-adjacent.
Then for all indices $i$, $x_i$ and $x_i'$ are equal or
$\kappa_i$-adjacent, so
$f_i(x_i)$ and $f_i(x_i')$ are equal or $\lambda_i$-adjacent.
Therefore, $f(p)$ and $f(p')$ are equal or
$NP_v(\lambda_1, \ldots, \lambda_v)$-adjacent. Thus, $f$ is continuous.

Suppose $f$ is continuous and for all indices~$i$, $x_i$ and $x_i'$
are $\kappa_i$-adjacent. Then $f(p)$ and $f(p')$ are equal or
$NP_v(\lambda_1, \ldots, \lambda_i)$-adjacent. Therefore, for each index $i$,
$f_i(x_i)$ and $f_i(x_i')$ are equal or $\lambda_i$-adjacent. Thus, each $f_i$ is
continuous.
\end{proof}

The statement analogous to 
Theorem~\ref{prod-cont} is not generally true if $c_u$-adjacencies
are used instead of normal product adjacencies, as shown in the following.

\begin{exl}
\label{factors-not-prod}
Let $X = \{(0,0), (1,0)\} \subset \Z^2$.
Let $Y = \{(0,0), (1,1)\} \subset \Z^2$.
Clearly, there is an isomorphism $f: (X,c_2) \to (Y,c_2)$. Consider
$X' = X \times \{0\} \subset \Z^3$ and
$Y' = Y \times \{0\} \subset \Z^3$. Note that the product map
$f \times 1_{\{0\}}$ is not
$(c_1,c_1)$-continuous, since
$X'$ is $c_1$-connected and
$Y'=(f \times 1_{\{0\}})(X')$ is not $c_1$-connected. \qed
\end{exl}

The following is a generalization of a result of~\cite{Han05}.

\begin{thm}
\label{projection-cont}
The projection maps
$p_i: (\Pi_{i=1}^v X_i, NP_u(\kappa_1, \ldots, \kappa_v)) \to (X_i, \kappa_i)$
defined by $p_i(x_1, \ldots, x_v) = x_i$
for $x_i \in (X_i,\kappa_i)$, are all continuous, for $1 \leq u \leq v$.
\end{thm}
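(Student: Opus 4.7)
The plan is to apply Theorem~\ref{cont-by-adj}, which reduces continuity to checking that adjacent points map to equal-or-adjacent points. So the whole argument should collapse to a direct appeal to Definition~\ref{NP_u-def}.

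First I would fix an index $i \in \{1, \ldots, v\}$ and take two arbitrary $NP_u(\kappa_1, \ldots, \kappa_v)$-adjacent points $p = (x_1, \ldots, x_v)$ and $p' = (x_1', \ldots, x_v')$ in $\Pi_{j=1}^v X_j$. By Definition~\ref{NP_u-def}, there is a nonempty set $S \subseteq \{1, \ldots, v\}$ of cardinality at most $u$ such that $x_j$ and $x_j'$ are $\kappa_j$-adjacent for $j \in S$ and $x_j = x_j'$ for $j \notin S$. In either case, for each individual coordinate $j$ — in particular for the coordinate $i$ — we have that $x_j$ and $x_j'$ are equal or $\kappa_j$-adjacent. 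Hence $p_i(p) = x_i$ and $p_i(p') = x_i'$ are equal or $\kappa_i$-adjacent, so by Theorem~\ref{cont-by-adj}, $p_i$ is $(NP_u(\kappa_1, \ldots, \kappa_v), \kappa_i)$-continuous.

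There is essentially no obstacle here: the definition of $NP_u$-adjacency was crafted precisely so that each coordinate change is either trivial or along a factor adjacency, and the bound $u$ on the number of changing coordinates plays no role at all in this argument. The same proof therefore works uniformly for every $u \in \{1, \ldots, v\}$, which is why the statement is valid in that full range.
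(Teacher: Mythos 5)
Your proof is correct and follows essentially the same route as the paper's: take two $NP_u$-adjacent points, observe from Definition~\ref{NP_u-def} that every coordinate pair is equal or adjacent in its factor, and conclude via the adjacency characterization of continuity (Theorem~\ref{cont-by-adj}). You merely spell out the index set $S$ explicitly where the paper leaves it implicit, and your closing remark that $u$ plays no role is an accurate observation about why the result holds for all $1 \leq u \leq v$.
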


\begin{proof} Let $p=(x_1, \ldots, x_v)$ and
$p'=(x_1', \ldots,x_v')$ be
$NP_u(\kappa_1, \ldots, \kappa_v)$-adjacent in
$(\Pi_{i=1}^v X_i, NP_u(\kappa_1, \ldots, \kappa_v))$, where
$x_i,x_i' \in X_i$. Then for all indices $i$,
$x_i=p_i(p)$ and $x_i'=p_i(p')$ are equal or $\kappa_i$-adjacent. Thus, $p_i$ is continuous.
\end{proof}

The statement analogous to 
Theorem~\ref{projection-cont} is not generally true if a $c_u$-adjacency
is used instead of a normal product adjacency, as shown in the following.

\begin{exl}
\cite{BoxKar12}
Let $X=[0,1]_{\Z} \subset \Z$.
Let $Y=\{(0,0),(1,1)\} \subset \Z^2$.
Then the projection map
$p_2: (X \times Y, c_3) \to (Y,c_1)$ is
not continuous, since $X \times Y$ 
is $c_3$-connected and $Y$ is not
$c_1$-connected. \qed
\end{exl}

We see in the next result that isomorphism is preserved by taking
Cartesian products with a normal product
adjacency.

\begin{thm}
\label{prod-iso}
Let $X = \Pi_{i=1}^v X_i$.
Let $f_i: (X_i,\kappa_i) \to (Y_i\,\lambda_i)$, $1 \leq i \leq v$.
\begin{itemize}
\item For $1 \leq u \leq v$, if
the product map
$f=\Pi_{i=1}^v f_i: (X, NP_u(\kappa_1,\ldots,\kappa_v)) \to 
(\Pi_{i=1}^v Y_i, NP_u(\lambda_1,\ldots,\kappa_v))$ is an isomorphism, then
for $1 \leq i \leq v$, $f_i$ is an
isomorphism.
\item If $f_i$ is an isomorphism for all $i$, then the product map
$f=\Pi_{i=1}^v f_i: (X, NP_v(\kappa_1,\ldots,\kappa_v)) \to 
(\Pi_{i=1}^v Y_i, NP_v(\lambda_1,\ldots,\kappa_v))$ is an isomorphism.
\end{itemize}
\end{thm}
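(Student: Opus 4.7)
The plan for the second bullet is to apply Theorem~\ref{prod-cont} twice. If each $f_i$ is an isomorphism, then each $f_i$ is a bijection whose inverse $f_i^{-1}$ is also continuous. Taking products coordinate-wise, the map $\Pi_{i=1}^v f_i$ is a bijection with inverse $\Pi_{i=1}^v f_i^{-1}$. Applying Theorem~\ref{prod-cont} once to $\{f_i\}$ and once to $\{f_i^{-1}\}$ shows that both product maps are $NP_v$-continuous, and hence $\Pi_{i=1}^v f_i$ is an isomorphism in the $NP_v$ category.

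For the first bullet, the idea is to pull back each factor via a ``slice argument.'' Suppose $f=\Pi_{i=1}^v f_i$ is an isomorphism with respect to $NP_u$. Since $f$ is a bijection and the factors are taken to be non-empty, each $f_i$ is automatically a bijection. To show $f_i$ is $(\kappa_i,\lambda_i)$-continuous, I would fix arbitrary base points $x_j^0 \in X_j$ for $j\neq i$, and for any $\kappa_i$-adjacent pair $x_i,x_i' \in X_i$ form
\[ p=(x_1^0,\ldots,x_i,\ldots,x_v^0), \quad p'=(x_1^0,\ldots,x_i',\ldots,x_v^0). \]
These two points differ in exactly one coordinate where they are $\kappa_i$-adjacent, so they are $NP_u(\kappa_1,\ldots,\kappa_v)$-adjacent for every $u\in\{1,\ldots,v\}$. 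Continuity of $f$ then gives that $f(p)$ and $f(p')$ are equal or $NP_u$-adjacent. But by construction these two image points already agree in every coordinate $j\neq i$, so the $NP_u$-condition collapses to ``$f_i(x_i)$ and $f_i(x_i')$ are equal or $\lambda_i$-adjacent.'' Applying the identical argument to $f^{-1}=\Pi_{i=1}^v f_i^{-1}$, which is continuous because $f$ is an isomorphism, yields continuity of each $f_i^{-1}$, so each $f_i$ is an isomorphism.

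The main thing to watch is that the two halves use the $NP_u$ definition in opposite directions: the first bullet uses only the easy implication that a one-coordinate difference produces $NP_u$-adjacency, which is why it works uniformly for every $u$; the second bullet requires the converse direction of preservation, which is exactly what Theorem~\ref{prod-cont} supplies for $u=v$. I do not expect any serious obstacle beyond keeping track of which coordinates may vary at once; the uniformity of the first bullet in $u$ and the restriction to $u=v$ in the second are both natural, consistent with Example~\ref{factors-not-prod} which rules out the converse of the second bullet under $c_u$-adjacencies.
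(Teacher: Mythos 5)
Your proposal is correct and follows essentially the same route as the paper: the second bullet is the paper's double application of Theorem~\ref{prod-cont} to $\{f_i\}$ and $\{f_i^{-1}\}$, and your ``slice argument'' for the first bullet is exactly the paper's factorization $f_i = p_i' \circ f \circ I_i$ (and $f_i^{-1} = p_i \circ f^{-1} \circ I_i'$) with the composition unwound into a direct adjacency check. No gaps.
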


\begin{proof}
Let $f$ be an isomorphism. Then each $f_i$ must be
one-to-one and onto.

Let $x_i \in X_i$. Let
$I_i: X_i \to X$ be defined by
\[I_i(x)=(x_1, \ldots, x_{i-1},x, x_{i+1},\ldots,x_v).
\]
Define $I_i': Y_i \to Y$ similarly.
Clearly, $I_i$ is $(\kappa_i,NP_u(\kappa_1,\ldots,\kappa_v)$-continuous
and $I_i'$ is $(\lambda_i,NP_u(\lambda_1,\ldots,\lambda_v)$-continuous.
Let $p_i: X \to X_i$ and $p_i': Y \to Y_i$ be the projections to the
$i$-th coordinate.
By Theorems~\ref{composition} and~\ref{projection-cont},
$f_i=p_i' \circ f \circ I_i$ and
$f_i^{-1}= p_i \circ f^{-1} \circ I_i'$
are continuous. Hence, $f_i$ is an
isomorphism.

Let $f_i: X_i \to Y_i$
be an isomorphism. One sees
easily that $f$ is one-to-one and onto,
and by Theorem~\ref{prod-cont}, $f$ is
continuous. The inverse function $f^{-1}$
is the product function of the $f_i^{-1}$, hence
is continuous by Theorem~\ref{prod-cont}.
Thus, $f$ is an isomorphism.
\end{proof}

The statement analogous to Theorem~\ref{prod-iso} is not generally
true for all $c_u$-adjacencies, as shown
by the following.

\begin{exl}
\label{prod-map-exl}
Let $X = \{(0,0), (1,1)\} \subset \Z^2$.
Let $Y = \{(0,0), (1,0)\} \subset \Z^2$.
Clearly, $(X,c_2)$ and $(Y,c_2)$ are
isomorphic. Consider
$X' = X \times \{0\} \subset \Z^3$ and
$Y' = Y \times \{0\} \subset \Z^3$. Note
$(X',c_1)$ and $(Y',c_1)$ are not
isomorphic, since the former is $c_1$-disconnected and the latter is
$c_1$-connected. \qed
\end{exl}

\section{$NP_v$ and connectedness}
\begin{thm}
\label{prod-connected}
Let $(X_i,\kappa_i)$ be digital images, $i \in \{1,2, \ldots, v\}$. Then
$(X_i,\kappa_i)$ is connected for all $i$ if and only
$(\Pi_{i=1}^v X_i, NP_v(\kappa_1, \ldots, \kappa_v))$ is connected.
\end{thm}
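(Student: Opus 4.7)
The plan is to handle the two directions separately, using tools already established earlier in the paper.

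For the reverse implication, I would assume $(\Pi_{i=1}^v X_i, NP_v(\kappa_1,\ldots,\kappa_v))$ is connected and argue via continuity of projections. By Theorem~\ref{projection-cont}, each projection $p_i$ is $(NP_v(\kappa_1,\ldots,\kappa_v), \kappa_i)$-continuous, and it is surjective. Hence $X_i = p_i(\Pi_{j=1}^v X_j)$ is the image of a connected set under a continuous function, so $X_i$ is $\kappa_i$-connected by Definition~\ref{continuous}.

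For the forward implication, I would assume each $(X_i,\kappa_i)$ is connected and build an explicit path in the product between any two chosen points $p=(x_1,\ldots,x_v)$ and $q=(y_1,\ldots,y_v)$. For each $i$, choose a $\kappa_i$-path $\{z^{(i)}_j\}_{j=0}^{m_i}$ from $x_i$ to $y_i$ in $X_i$. Then concatenate them coordinate-by-coordinate: starting at $p$, first traverse the first-coordinate path $\{z^{(1)}_j\}$ while holding all other coordinates fixed at their $p$-values; then traverse the second-coordinate path $\{z^{(2)}_j\}$ while holding coordinate $1$ fixed at $y_1$ and the remaining coordinates at their $p$-values; and so on until terminating at $q$. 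Any two consecutive points in this sequence differ in exactly one coordinate, and in that coordinate the change is a $\kappa_i$-adjacency, so they are $NP_v(\kappa_1,\ldots,\kappa_v)$-adjacent.

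The main step to verify is that the concatenated sequence really consists of $NP_v$-adjacent consecutive pairs. This is not really an obstacle, but it is the heart of the argument: it works precisely because the definition of $NP_u$ requires ``at least $1$ and at most $u$'' coordinates to change, so a single-coordinate change is permitted when $u=v$. An alternative route would be induction on $v$ via Proposition~\ref{inductive-equiv}, reducing to a base case $v=2$ for the standard normal product, but the direct construction above is cleaner and avoids invoking a base case that is not explicitly stated in the paper.
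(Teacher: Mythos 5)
Your proposal is correct and matches the paper's own proof in both directions: the reverse implication via surjectivity and continuity of the projections (Theorem~\ref{projection-cont}), and the forward implication via the same coordinate-by-coordinate concatenation of factor paths (the paper writes these as the sets $P_i'$). The only cosmetic difference is that consecutive points in a path may also be equal rather than differing in exactly one coordinate, which does not affect the argument.
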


\begin{proof}
Suppose $(X_i,\kappa_i)$ is connected for all $i$. Let $x_i, x_i' \in X_i$.
Then there are paths $P_i$ in $X_i$ from $x_i$ to $x_i'$.
Let $p=(x_1,\ldots, x_v), p'=(x_1', \ldots, x_v') \in \Pi_{i=1}^v X_i$. Then
$\bigcup_{i=1}^v P_i'$, where
\[ P_1' = P_1 \times \{(x_2,\ldots,x_v)\}, \]
\[ P_i' = \{(x_1', \ldots, x_{i-1}')\} \times P_i \times \{(x_{i+1}, \ldots, x_v)\}
  \mbox{ for } 2 \leq i < v,
\]
\[ P_v' = \{(x_1', \ldots, x_{v-1}')\} \times P_v,
\]
is a path in $\Pi_{i=1}^v X_i$ from $p$ to $p'$. Since $p$ and $p'$ are 
arbitrarily selected points in $\Pi_{i=1}^v X_i$, it follows that
$(\Pi_{i=1}^v X_i, NP_v(\kappa_1, \ldots, \kappa_v))$ is connected.

If $(\Pi_{i=1}^v X_i, NP_v(\kappa_1, \ldots, \kappa_v))$ is connected, then
$(X_i,\kappa_i) = p_i(\Pi_{i=1}^v X_i)$ is connected,
by Definition~\ref{continuous} and 
Theorem~\ref{projection-cont}.
\end{proof}

The statement analogous to Theorem~\ref{prod-connected}
is not generally true if a $c_u$-adjacency is
used instead of $NP_v(\kappa_1, \ldots, \kappa_v)$
for $X \times Y$, as shown by the following.

\begin{exl}
\rm{\cite{BoxKar12}} Let $X=[0,1]_{\Z}$,
$Y=\{(0,0),(1,1)\} \subset \Z^2$. Then
$X \times Y$ is $c_2$-connected, but $Y$ is not
$c_1$-connected. Also, $X$ is $c_1$-connected
and $Y$ is $c_2$-connected, but $X \times Y$ is
not $c_1$-connected. \qed
\end{exl}

\section{$NP_v$ and homotopy relations}
In this section, we show that normal products preserve a variety of
digital homotopy relations. These include homotopy type and several generalizations introduced in~\cite{BoSt1}.
These generalizations -
homotopic similarity, long homotopy type, and real homotopy type -
all coincide with homotopy type on pairs of finite digital images; however, for
each of these relationships, an example
is given in~\cite{BoSt1} of a pair of digital images, at least one member of
which is infinite, such that the two
images have the given relation but are not
homotopy equivalent.

By contrast with Euclidean topology, in which a bounded space such as a single point and an unbounded
space such as $\R^n$ with Euclidean topology can have the same homotopy type, a finite digital
image and an image with infinite diameter - e.g., a single point
and $(\Z^n, c_1)$ - cannot share the same
homotopy type. However, examples in~\cite{BoSt1}
show that a finite digital
image and an image with infinite diameter
can share homotopic similarity, long homotopy type, or real homotopy type.

\subsection{Homotopic maps and homotopy type}
\begin{thm}
\label{prod-htpy}
Let $(X_i,\kappa_i)$ and $(Y_i,\lambda_i)$ be
digital images, $1 \leq i \leq v$. Let
$X = (\Pi_{i=1}^v X_i, NP_v(\kappa_1, \ldots, \kappa_i))$. Let
$Y = (\Pi_{i=1}^v Y_i, NP_v(\lambda_1, \ldots, \lambda_i))$.
Let $f_i,g_i: X_i \to Y_i$ be continuous and let
$H_i: X_i \times [0,m_i]_{\Z} \to Y_i$ be a homotopy
from $f_i$ to $g_i$. Then there is a homotopy $H$
between the product maps
$F=\Pi_{i=1}^v f_i: X \to Y$ and
$G=\Pi_{i=1}^v g_i: X \to Y$. If the homotopies $H_i$
are pointed, then $H$ is pointed.
\end{thm}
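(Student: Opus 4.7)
The plan is to concatenate the individual homotopies, performing each $H_i$ in its own coordinate while freezing the others at $f_j$ (for $j$ yet to be homotoped) or $g_j$ (for $j$ already homotoped). Specifically, let $M = \sum_{i=1}^{v} m_i$ and, writing $s_i = \sum_{j<i} m_j$, define, for $t \in [s_i, s_i + m_i]_{\Z}$,
\[ H((x_1,\ldots,x_v), t) = (g_1(x_1),\ldots,g_{i-1}(x_{i-1}),\, H_i(x_i, t-s_i),\, f_{i+1}(x_{i+1}),\ldots,f_v(x_v)). \]
At the shared endpoints $t = s_i + m_i = s_{i+1}$ the two interval definitions agree, since $H_i(x_i, m_i) = g_i(x_i)$ and $H_{i+1}(x_{i+1}, 0) = f_{i+1}(x_{i+1})$. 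At $t=0$ this gives $F$; at $t=M$ this gives $G$.

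The heart of the verification is checking the three conditions of Definition~\ref{htpy-2nd-def} for $H$. For each fixed $t$, the function $H_t$ is a Cartesian product of maps of the form $f_j$, $g_j$, or $H_j(\,\cdot\,, s)$, each of which is continuous by hypothesis, so $H_t$ is continuous by Theorem~\ref{prod-cont}. For the path condition on $H_{(x_1,\ldots,x_v)}$, I compare $H(\mathbf{x},t)$ with $H(\mathbf{x},t+1)$: if both lie in a single interval $[s_i, s_i + m_i]_{\Z}$, only coordinate $i$ changes, and does so by one step of $H_i$, giving equality or $\lambda_i$-adjacency in that coordinate; if $t = s_{i+1}$ is a boundary, then crossing to $t+1 \in [s_{i+1}, s_{i+1}+m_{i+1}]_{\Z}$ changes only coordinate $i+1$, from $f_{i+1}(x_{i+1}) = H_{i+1}(x_{i+1},0)$ to $H_{i+1}(x_{i+1},1)$, again giving equality or $\lambda_{i+1}$-adjacency. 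In either case $H(\mathbf{x},t)$ and $H(\mathbf{x},t+1)$ agree in all but at most one coordinate, so they are equal or $NP_v$-adjacent.

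I expect the main bookkeeping obstacle to be the boundary check just described: ensuring the two interval-definitions of $H$ agree at the seams $t = s_{i+1}$, and that the single ``swap'' from homotoping coordinate $i$ to coordinate $i+1$ does not inadvertently change two coordinates in one time step. The argument above handles this because at the seam the $i$-th coordinate has already settled to $g_i$ and the $(i{+}1)$-th coordinate is still at $f_{i+1} = H_{i+1}(\cdot,0)$, so only the $(i{+}1)$-th entry moves when $t$ advances.

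Finally, for the pointed claim, suppose each $H_i$ holds some $x_i^0 \in X_i$ fixed, so $H_i(x_i^0, t) = f_i(x_i^0) = g_i(x_i^0)$ for all $t$. At the basepoint $p_0 = (x_1^0,\ldots,x_v^0)$, every coordinate of $H(p_0, t)$ is therefore $f_j(x_j^0) = g_j(x_j^0)$ independent of $t$, so $H$ holds $p_0$ fixed. This completes the plan.
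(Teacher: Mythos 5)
Your construction is correct, but it takes a genuinely different route from the paper's. The paper pads each $H_i$ to a common length $M=\max\{m_i\}$ (by holding it constant at $g_i$ after time $m_i$) and then runs all $v$ padded homotopies \emph{simultaneously}, setting $H((x_1,\ldots,x_v),t)=(H_1'(x_1,t),\ldots,H_v'(x_v,t))$; the path condition for $H_x$ holds precisely because $NP_v$ permits all $v$ coordinates to move in a single time step. You instead \emph{concatenate} the homotopies over $[0,\sum_i m_i]_{\Z}$, moving one coordinate at a time while freezing the others at $g_j$ or $f_j$, and your seam check (that at $t=s_{i+1}$ coordinate $i$ has settled to $g_i$ while coordinate $i+1$ is still at $f_{i+1}=H_{i+1}(\cdot,0)$, so only one coordinate moves per step) is exactly the bookkeeping needed to make that work. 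The paper's version is shorter in time ($\max$ versus $\sum$) and slightly easier to verify; yours has the feature that each time step alters only one coordinate, so the path condition on $H_x$ would survive even under a product adjacency weaker than $NP_v$ in which simultaneous motion of several coordinates is not an adjacency (the $H_t$ condition, of course, still needs $NP_v$ on the codomain via Theorem~\ref{prod-cont}). Both arguments establish the pointed claim the same way, since a pointed $H_i$ forces $f_i(x_i^0)=g_i(x_i^0)$ and hence every coordinate of your concatenation is constant in $t$ at the basepoint.
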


\begin{proof}
Let $M = \max\{m_i\}_{i=1}^v$. Let
$H_i': X_i \times [0,M]_{\Z} \to Y_i$ be defined by
\[ H_i'(x,t) = \left \{ \begin{array}{ll}
           H_i(x,t) & \mbox{for } 0 \le t \leq m_i; \\
           H_i(x,m_i) & \mbox{for } m_i \le t \le M.
\end{array} \right .
\]
Clearly, $H_i'$ is a homotopy from $f_i$ to $g_i$.

Let $H: X \times M \to Y$ be defined by
\[ H((x_1,\ldots,x_v),t) = (H_1'(x_1,t), \ldots, H_v'(x_v,t)).
\]
It is easily seen that $H$ is a homotopy from $F$ to $G$, and that
if each $H_i$ is pointed, then $H$ is pointed.
\end{proof}

The following theorem generalizes results
of~\cite{BoSt1}.

\begin{thm}
\label{product-htpy-thm}
Suppose
\begin{equation}
\label{factor-hyp}
X_i \simeq_{\kappa_i,\lambda_i} Y_i
\mbox{ for } 1 \leq i \leq v.
\end{equation}
Then
\begin{equation}
\label{product-htpy}
X=\Pi_{i=1}^v X_i \simeq_{NP_v(\kappa_1,\ldots,\kappa_v),NP_v(\lambda_1,\ldots,\lambda_v)} Y=\Pi_{i=1}^v Y_i.
\end{equation}
Further, if the homotopy equivalences~(\ref{factor-hyp})
are all pointed 
with respect to $x_i \in X_i$ and $y_i \in Y_i$, then
the homotopy equivalence~(\ref{product-htpy}) is pointed
with respect to $(x_1,\ldots,x_v) \in X$ and
$(y_1,\ldots,y_v) \in Y$.
\end{thm}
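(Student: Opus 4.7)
The plan is to assemble the product homotopy equivalence directly from the factor data, using Theorem~\ref{prod-cont} to transport continuity to the product and Theorem~\ref{prod-htpy} to transport homotopies. For each $i$, the hypothesis~(\ref{factor-hyp}) supplies continuous maps $f_i\colon X_i\to Y_i$ and $g_i\colon Y_i\to X_i$ with $g_i\circ f_i\simeq_{\kappa_i,\kappa_i} 1_{X_i}$ and $f_i\circ g_i\simeq_{\lambda_i,\lambda_i} 1_{Y_i}$. Form the product maps $F=\Pi_{i=1}^v f_i\colon X\to Y$ and $G=\Pi_{i=1}^v g_i\colon Y\to X$, each of which is continuous with respect to the $NP_v$ adjacencies by Theorem~\ref{prod-cont}.

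Next I would observe the identity
\[
G\circ F \;=\; \Pi_{i=1}^v (g_i\circ f_i),\qquad F\circ G \;=\; \Pi_{i=1}^v (f_i\circ g_i),
\]
which is immediate from the coordinatewise definition of the product map. Because $g_i\circ f_i\simeq 1_{X_i}$ for each $i$, Theorem~\ref{prod-htpy} furnishes a homotopy on the product between $\Pi_{i=1}^v(g_i\circ f_i)$ and $\Pi_{i=1}^v 1_{X_i}=1_X$; that is, $G\circ F\simeq_{NP_v(\kappa_1,\ldots,\kappa_v),NP_v(\kappa_1,\ldots,\kappa_v)} 1_X$. The symmetric argument using the homotopies $f_i\circ g_i\simeq 1_{Y_i}$ gives $F\circ G\simeq 1_Y$. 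Together with Definition~\ref{htpy-type}, these two relations certify that $X\simeq_{NP_v(\kappa_1,\ldots,\kappa_v),NP_v(\lambda_1,\ldots,\lambda_v)} Y$, establishing~(\ref{product-htpy}).

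For the pointed refinement, I would choose the factor data so that $f_i(x_i)=y_i$, $g_i(y_i)=x_i$, and the factor homotopies hold $x_i$ and $y_i$ fixed respectively. Then $F(x_1,\ldots,x_v)=(y_1,\ldots,y_v)$ and $G(y_1,\ldots,y_v)=(x_1,\ldots,x_v)$, and Theorem~\ref{prod-htpy} explicitly says that when each component homotopy is pointed, so is the product homotopy. Hence the assembled homotopies hold $(x_1,\ldots,x_v)$ and $(y_1,\ldots,y_v)$ fixed, yielding the pointed conclusion.

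The proof is essentially a bookkeeping reduction to the two previously proved product theorems, so there is no serious obstacle. The only subtle point to be careful about is that the homotopies $H_i$ produced by~(\ref{factor-hyp}) may have different time lengths $m_i$; but this is precisely what Theorem~\ref{prod-htpy} already handles (by padding with the constant extension to $M=\max_i m_i$), so no additional argument is required at the product stage.
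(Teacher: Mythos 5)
Your proposal is correct and follows essentially the same route as the paper: form the product maps $F=\Pi_{i=1}^v f_i$ and $G=\Pi_{i=1}^v g_i$, get their continuity from Theorem~\ref{prod-cont}, and obtain the homotopies $G\circ F\simeq 1_X$ and $F\circ G\simeq 1_Y$ from the product-of-homotopies construction. The only cosmetic difference is that you invoke Theorem~\ref{prod-htpy} as a black box (via the identity $G\circ F=\Pi_{i=1}^v(g_i\circ f_i)$), whereas the paper re-derives the padded product homotopy inline; the content is identical.
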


\begin{proof}
We give a proof for the unpointed assertion. With
minor modifications, the pointed assertion is proven
similarly.

By hypothesis, there exist continuous functions
$f_i: X_i \to Y_i$ and $g_i: Y_i \to X_i$
and homotopies
$H_i: X_i \times [0,m_i]_{\Z} \to X_i$ from
$g_i \circ f_i$ to $1_{X_i}$ and
$K_i: Y_i \times [0,n_i]_{\Z} \to Y_i$ from
$f_i \circ g_i$ to $1_{Y_i}$.

Let $M = \max\{m_i\}_{i=1}^v$. Then
$H_i': X_i \times [0,M]_{\Z} \to X_i$, defined by 
\[ H_i'(x,t) = \left \{ \begin{array}{ll}
   H_i(x,t) & \mbox{for } 0 \leq t \leq m_i; \\
   H_i(x,m_i) & \mbox{for } m_i \leq t \leq M,
   \end{array} \right .
\]
is clearly a homotopy from $g_i \circ f_i$ to $1_{X_i}$.

Let $F=\Pi_{i=1}^v f_i: X \to Y$.
Let $G=\Pi_{i=1}^v g_i: Y \to X$.
By Theorem~\ref{prod-cont}, $F$ and $G$ are
continuous.
Let $H: X \times [0,M]_{\Z} \to X$ be defined
by
\[ H(x_1,\ldots,x_v,t) =
   (H_1'(x_1,t),\ldots, H_v'(x_v,t)).
\]
Then $H$ is easily seen to be a homotopy from $G \circ 
F$ to $\Pi_{i=1}^v 1_{X_i} = 1_X$.

We can similarly show that $F \circ G \simeq 1_Y$. 
Therefore, $X \simeq Y$.
\end{proof}

The statements analogous to Theorems~\ref{prod-htpy}
and~\ref{product-htpy-thm} are not generally true if
a $c_u$-adjacency is used instead of a normal
product adjacency for the Cartesian product. 
Consider, e.g., $X$ and $Y$ as in
Example~\ref{prod-map-exl}. Let
$f: Y \to X$ be a $(c_1,c_2)$-isomorphism. Then
$f$ is $(c_1,c_2)$-homotopic to the constant map $\overline{(0,0)}$ of $Y$ to $(0,0)$. However,
$f \times 1_{\{0\}}$ is not even
$(c_1,c_1)$-continuous, hence is not $(c_1,c_1)$-homotopic to 
$\overline{(0,0)} \times 1_{\{0\}}$. Although $X$
and $Y$ are $(c_2,c_1)$-homotopy equivalent,
$X' = X \times \{0\}$ and $Y'=Y \times \{0\}$
are not $(c_1,c_1)$-homotopy equivalent, since
$X'$ is not $c_1$-connected and $Y'$ is
$c_1$-connected.

\subsection{Homotopic similarity}
\begin{definition}
\label{htpy-sim-def}
\rm{\cite{BoSt1}}
Let $X$ and $Y$ be digital images.
We say $(X,\kappa)$ and $(Y,\lambda)$
are {\rm homotopically similar}, denoted
$X \simeq_{\kappa,\lambda}^s Y$, if
there exist subsets
$\{X_j\}_{j=1}^{\infty}$ of $X$ and
$\{Y_j\}_{j=1}^{\infty}$ of $Y$ such that:
\begin{itemize}
\item $X = \bigcup_{j=1}^{\infty} X_j$, 
$Y = \bigcup_{j=1}^{\infty} Y_j$, 
and, for all $j$,
$X_j \subset X_{j+1}$,
$Y_j \subset Y_{j+1}$. 
\item There are continuous functions
      $f_j: X_j \rightarrow Y_j$, 
      $g_j: Y_j \rightarrow X_j$ such that
      $g_j \circ f_j \simeq_{\kappa,\kappa} 1_{X_j}$ and
      $f_j \circ g_j \simeq_{\lambda,\lambda} 1_{Y_j}$.
\item For $m \leq n$, 
      $f_n|X_m \simeq_{\kappa,\lambda} f_m$ in $Y_m$ and
      $g_n|Y_m \simeq_{\lambda,\kappa} g_m$ in $X_m$.
\end{itemize}

If all of these homotopies are pointed with respect to
some $x_1 \in X_1$ and $y_1 \in Y_1$,
we say $(X,x_1)$ and $(Y,y_1)$ are 
{\rm pointed homotopically similar},
denoted $(X,x_1) \simeq_{\kappa,\lambda}^s (Y,y_1)$
or $(X,x_1) \simeq^s (Y,y_1)$ when $\kappa$ and
$\lambda$ are understood.
$\Box$
\end{definition}

\begin{thm}
\label{htpy-sim}
Let $X_i \simeq_{\kappa_i,\lambda_i}^s Y_i$, $1 \leq i \leq v$.
Let $X = \Pi_{i=1}^v X_i$, $X = \Pi_{i=1}^v X_i$. Then
$X \simeq_{NP_v(\kappa_1,\ldots,\kappa_v), NP_v(\lambda_1,\ldots,\lambda_v)}^s Y$. If the similarities $X_i \simeq_{\kappa_i,\lambda_i}^s Y_i$ are pointed at $x_i \in X_i$, $y_i \in Y_i$,
then the similarity
$X \simeq_{NP_v(\kappa_1,\ldots,\kappa_v), NP_v(\lambda_1,\ldots,\lambda_v)}^s Y$
is pointed at $x_0=(x_1,\ldots,x_v) \in X$,
$y_0=(y_1,\ldots,y_v) \in Y$.
\end{thm}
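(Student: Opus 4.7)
The plan is to build an exhaustion of the product $X$ (and of $Y$) by taking products of the exhaustions provided by each $X_i\simeq^s_{\kappa_i,\lambda_i} Y_i$, and to produce the required maps and homotopies by applying the product theorems already established, namely Theorem~\ref{prod-cont} (continuity of product maps) and Theorem~\ref{prod-htpy} (product of homotopies is a homotopy).

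First, fix exhaustions $\{X_{i,j}\}_{j=1}^\infty$ of $X_i$ and $\{Y_{i,j}\}_{j=1}^\infty$ of $Y_i$, together with continuous $f_{i,j}\colon X_{i,j}\to Y_{i,j}$, $g_{i,j}\colon Y_{i,j}\to X_{i,j}$ and the three types of homotopies witnessing $X_i\simeq^s_{\kappa_i,\lambda_i} Y_i$. Define
\[
X_j := \Pi_{i=1}^v X_{i,j}, \qquad Y_j := \Pi_{i=1}^v Y_{i,j},
\]
each carrying the normal product adjacency of the restricted factor adjacencies, which coincides with the restriction of $NP_v(\kappa_1,\ldots,\kappa_v)$ (respectively $NP_v(\lambda_1,\ldots,\lambda_v)$) to $X_j$ (respectively $Y_j$). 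The inclusions $X_{i,j}\subset X_{i,j+1}$ give $X_j\subset X_{j+1}$; and given $(a_1,\ldots,a_v)\in X$, each $a_i$ lies in some $X_{i,j_i}$, so letting $J=\max_i j_i$ yields $(a_1,\ldots,a_v)\in X_J$, showing $X=\bigcup_j X_j$. The analogous facts hold for $Y$.

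Next, define the product maps $F_j := \Pi_{i=1}^v f_{i,j}\colon X_j\to Y_j$ and $G_j := \Pi_{i=1}^v g_{i,j}\colon Y_j\to X_j$; Theorem~\ref{prod-cont} makes them continuous. To verify $G_j\circ F_j\simeq 1_{X_j}$, observe that $G_j\circ F_j=\Pi_{i=1}^v(g_{i,j}\circ f_{i,j})$, and apply Theorem~\ref{prod-htpy} to the given homotopies $g_{i,j}\circ f_{i,j}\simeq 1_{X_{i,j}}$, noting that $\Pi_{i=1}^v 1_{X_{i,j}} = 1_{X_j}$. The relation $F_j\circ G_j\simeq 1_{Y_j}$ is handled identically. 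For the compatibility condition, for $m\leq n$ the restriction $F_n|X_m$ is exactly the product map $\Pi_{i=1}^v(f_{i,n}|X_{i,m})$, and the factor hypothesis supplies homotopies $f_{i,n}|X_{i,m}\simeq f_{i,m}$ in $Y_{i,m}$; a second application of Theorem~\ref{prod-htpy} then yields $F_n|X_m\simeq F_m$ in $Y_m$, and symmetrically for the $G_j$.

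For the pointed assertion, the pointedness of each factor similarity at $x_i$ and $y_i$ forces $x_i\in X_{i,1}$ and $y_i\in Y_{i,1}$ (otherwise the homotopies cannot hold them fixed), so $(x_1,\ldots,x_v)\in X_1$ and $(y_1,\ldots,y_v)\in Y_1$; the product homotopy constructed in Theorem~\ref{prod-htpy} holds a tuple fixed provided each coordinate homotopy holds its basepoint fixed, so every homotopy above is pointed at the required product basepoints. The main obstacle, should one arise, is bookkeeping: keeping straight that the adjacency on the product subset $X_j$ agrees with the restriction of the ambient $NP_v$-adjacency, which is immediate from the pointwise definition of $NP_v$ in Definition~\ref{NP_u-def}, and making sure the product maps we write down are actually the restrictions claimed, which again reduces to coordinatewise inspection.
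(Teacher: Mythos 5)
Your proposal is correct and follows essentially the same route as the paper's proof: form the product exhaustions $X_j=\Pi_i X_{i,j}$, $Y_j=\Pi_i Y_{i,j}$, take product maps $F_j=\Pi_i f_{i,j}$, $G_j=\Pi_i g_{i,j}$, and invoke Theorem~\ref{prod-cont} for continuity and Theorem~\ref{prod-htpy} for all three families of homotopies, including the compatibility conditions for $m\leq n$. The extra detail you supply (the $\max$ argument for $X=\bigcup_j X_j$, the identification of restrictions of product maps with products of restrictions) is correct and only makes explicit what the paper leaves as ``clearly.''
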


\begin{proof}
We give a proof for the unpointed assertion. A virtually 
identical argument can be given for the pointed assertion.

By hypothesis, for $j \in \N$ there exist digital images
$X_{i,j} \subset X_i$, $Y_{i,j} \subset Y_i$ such that
$X_{i,j} \subset X_{i,j+1}$, $X_i = \bigcup_{j=1}^{\infty} X_{i,j}$,
$Y_{i,j} \subset Y_{i,j+1}$, $Y_i = \bigcup_{j=1}^{\infty} Y_{i,j}$, and continuous functions
$f_{i,j}: X_{i,j} \to Y_{i,j}$, $g_{i,j}: Y_{i,j} \to X_{i,j}$,
such that
$g_{i,j} \circ f_{i,j} \simeq_{\kappa_i,\kappa_i} 1_{X_{i,j}}$,
$f_{i,j} \circ g_{i,j} \simeq_{\lambda_i,\lambda_i} 1_{Y_{i,j}}$, and $m \le n$ implies
$f_{i,n}|X_{i,m} \simeq_{\kappa_i,\lambda_i} f_{i,m}$ in $Y_{i,m}$ and
$g_{i,n}|X_{i,m} \simeq_{\lambda_i,\kappa_i} g_{i,m}$ in $X_{i,m}$.

Let $X_j = \Pi_{i=1}^v X_{i,j}$,
$Y_j = \Pi_{i=1}^v Y_{i,j}$. Clearly we have
$X = \bigcup_{j=1}^{\infty} X_j$,
$Y = \bigcup_{j=1}^{\infty} Y_j$,
$X_j \subset X_{j+1}$, $Y_j \subset Y_{j+1}$. Let
$f_j = \Pi_{i=1}^v f_{i,j}: X_j \to Y_j$,
$g_j = \Pi_{i=1}^v g_{i,j}: Y_j \to X_j$.
By Theorem~\ref{prod-cont},
$f_j$ is $(NP_v(\kappa_1,\dots,\kappa_v), NP_v(\lambda_1,\ldots,\lambda_v))$-continuous and $g_j$ is
$(NP_v(\lambda_1,\ldots,\lambda_v), NP_v(\kappa_1,\dots,\kappa_v))$-continuous.
By Theorem~\ref{prod-htpy},
$g_j \circ f_j \simeq_{NP_v(\kappa_1,\dots,\kappa_v), NP_v(\kappa_1,\dots,\kappa_v)} 1_{X_j}$
and $f_j \circ g_j \simeq_{NP_v(\lambda_1,\ldots,\lambda_v), NP_v(\lambda_1,\ldots,\lambda_v)} 1_{Y_j}$.
Also by Theorem~\ref{prod-htpy}, $m \leq n$ implies
$f_n|X_m \simeq_{NP_v(\kappa_1,\dots,\kappa_v),NP_v(\lambda_1,\ldots,\lambda_v)} f_m$ in $Y_m$ and
      $g_n|Y_m \simeq_{NP_v(\lambda_1,\ldots,\lambda_v),NP_v(\kappa_1,\dots,\kappa_v)} g_m$ in $X_m$.
This completes the proof.
\end{proof}

\subsection{Long homotopy type}
\begin{definition}
\label{revised-L}
\rm{\cite{BoSt1}}
Let $(X,\kappa)$ and $(Y,\lambda)$ be digital
images. Let $f,g: X \rightarrow Y$ be continuous.
Let $F: X \times \Z \to Y$ be
a function such that
\begin{itemize}
\item for all $x\in X$, there exists 
$N_{F,x} \in \N$ such that $t \leq -N_{F,x}$ implies
$F(x,t)=f(x)$ and $t \geq N_{F,x}$ implies
$F(x,t)=g(x)$.
\item For all $x\in X$, the induced function $F_x:\Z \to Y$ defined by 
\[ F_x(t) = F(x,t) \text{ for all } t\in \Z \]
is $(c_1,\lambda)$-continuous. 
\item For all $t\in \Z$, the induced function $F_t:X \to Y$ defined by
\[ F_t(x) = F(x,t) \text{ for all } x\in X \]
is $(\kappa,\lambda)$-continuous.
\end{itemize}
Then $F$ is a {\rm long homotopy} from $f$ to $g$.
If for some $x_0 \in X$ and $y_0 \in Y$ we have
$F(x_0,t)=y_0$ for all $t \in \N^*$, we say
$F$ is a {\rm pointed long homotopy}. We write
$f \simeq_{\kappa,\lambda}^L g$, or 
$f \simeq^L g$ when the adjacencies $\kappa$ and
$\lambda$ are understood, to indicate that
$f$ and $g$ are long homotopic functions.
$\Box$
\end{definition}

We have the following.
\begin{thm}
\label{long-htpc}
Let $f_i,g_i: (X_i,\kappa_i) \to (Y_i,\lambda_i)$ be
continuous functions that are long homotopic, $1 \leq i \leq v$.
Then $f=\Pi_{i=1}^v f_i$ and $g=\Pi_{i=1}^v g_i$ are long
homotopic maps from $(\Pi_{i=1}^v X_i, NP_v(\kappa_1,\ldots,\kappa_v))$ to $(\Pi_{i=1}^v Y_i, NP_v(\lambda_1,\ldots,\lambda_v))$. If the long homotopies $f_i \simeq^L g_i$ are
pointed with respect to $x_i \in X_i$ and $y_i \in Y_i$, then
the long homotopy $f \simeq^L g$ is pointed with respect to
$(x_1, \ldots, x_v) \in \Pi_{i=1}^v X_i$ and
$(y_1, \ldots, y_v) \in \Pi_{i=1}^v Y_i$.
\end{thm}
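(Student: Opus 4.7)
The plan is to mimic the proof of Theorem~\ref{prod-htpy}, but adapted to long homotopies on the time axis $\Z$. For each $i$, let $F_i: X_i \times \Z \to Y_i$ be a long homotopy from $f_i$ to $g_i$, with stabilization thresholds $N_{F_i,x_i} \in \N$ as in Definition~\ref{revised-L}. I define the candidate long homotopy $F: X \times \Z \to Y$, where $X = \Pi_{i=1}^v X_i$ and $Y = \Pi_{i=1}^v Y_i$, by
\[ F((x_1,\ldots,x_v),t) = (F_1(x_1,t),\ldots,F_v(x_v,t)). \]
The usual reparametrization trick from Theorem~\ref{prod-htpy} is unnecessary here because all homotopy parameters already live in the common set $\Z$; instead, the only coordination between factors is needed in the stabilization condition.

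For the stabilization condition of Definition~\ref{revised-L}, set
\[ N_{F,(x_1,\ldots,x_v)} = \max_{1 \leq i \leq v} N_{F_i,x_i}. \]
Then $t \leq -N_{F,(x_1,\ldots,x_v)}$ forces $F_i(x_i,t) = f_i(x_i)$ in every coordinate, so $F((x_1,\ldots,x_v),t) = f(x_1,\ldots,x_v)$, and analogously for large positive $t$. The slice continuity of $F_t: X \to Y$ is immediate from Theorem~\ref{prod-cont}, because $F_t = \Pi_{i=1}^v (F_i)_t$ is a product of the $(\kappa_i,\lambda_i)$-continuous slice maps $(F_i)_t$, and Theorem~\ref{prod-cont} yields $(NP_v(\kappa_1,\ldots,\kappa_v), NP_v(\lambda_1,\ldots,\lambda_v))$-continuity.

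The most delicate verification, and the part I would treat most carefully, is the path continuity of $F_{(x_1,\ldots,x_v)}: \Z \to Y$. For $c_1$-adjacent $t,t' \in \Z$, each coordinate $F_i(x_i,t)$ and $F_i(x_i,t')$ is either equal or $\lambda_i$-adjacent by the path continuity of $F_i$. If every coordinate agrees, the tuples coincide; otherwise at least one index exhibits strict $\lambda_i$-adjacency while the remaining coordinates are equal or $\lambda_j$-adjacent, which is exactly the criterion of Definition~\ref{NP_u-def} for $NP_v(\lambda_1,\ldots,\lambda_v)$-adjacency. Thus $F_{(x_1,\ldots,x_v)}$ is $(c_1, NP_v(\lambda_1,\ldots,\lambda_v))$-continuous. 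The potential pitfall here is to remember that Definition~\ref{NP_u-def} requires \emph{at least one} coordinate to differ by adjacency, not every coordinate; the case where the tuples happen to be equal is handled separately as a trivial path step.

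For the pointed statement, suppose each $F_i$ holds $x_i$ fixed with value $y_i$, i.e.\ $F_i(x_i,t) = y_i$ for all $t \in \Z$. Then $F((x_1,\ldots,x_v),t) = (y_1,\ldots,y_v)$ for all $t$, so $F$ holds $(x_1,\ldots,x_v)$ fixed with value $(y_1,\ldots,y_v)$, establishing that $F$ is a pointed long homotopy.
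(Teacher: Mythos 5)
Your proposal is correct and follows essentially the same route as the paper: the same product homotopy $F(x,t)=(F_1(x_1,t),\ldots,F_v(x_v,t))$, the same choice $N_{F,x}=\max_i N_{F_i,x_i}$ for the stabilization bound, and the same continuity checks for the induced slice and path functions (which the paper simply defers to the argument of Theorem~\ref{prod-htpy} rather than spelling out). Your explicit verification of the $(c_1,NP_v(\lambda_1,\ldots,\lambda_v))$-continuity of $F_x$, including the ``at least one adjacent coordinate'' point, is a correct elaboration of what the paper leaves implicit.
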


\begin{proof}
We give a proof for the unpointed assertion. Minor modifications
yield a proof for the pointed assertion.

Let $h_i: X_i \times \Z \to Y_i$ be a long homotopy from
$f_i$ to $g_i$. For all $x_i \in X_i$, there exists
$N_{F_i,x_i} \in \N$ such that $t \le -N_{F_i,x_i}$ implies
$h_i(x_i,t)=f_i(x_i)$ and $t \ge N_{F_i,x_i}$ implies
$h_i(x_i,t)=g_i(x_i)$. For all
$x=(x_1,\ldots, x_v) \in \Pi_{i=1}^v X_i$, let
$N_x=\max\{N_{F_i,x_i} \, | \, 1 \le i \le v\}$. Let
$h=\Pi_{i=1}^v h_i: \Pi_{i=1}^v X_i \times \Z \to \Pi_{i=1}^v  Y_i$. Then $t \le -N_x$ implies $h(x,t)=f(x)$ and
$t \ge N_x$ implies $h(x,t)=g(x)$.

For all $x \in \Pi_{i=1}^v X_i$, the induced function
$h_x(t)=(h_i(x_1,t), \ldots, h_v(x_v,t))$ is $(c_1,NP_v(\lambda_1,\ldots,\lambda_v))$-continuous, by an
argument similar to that given in the proof of
Theorem~\ref{prod-htpy}.

For all $t \in \Z$, the induced function
$h_t(x)=(h_i(x_1,t), \ldots, h_v(x_v,t))$ is
$(NP_v(\kappa_1,\ldots,\kappa_v), NP_v(\lambda_1,\ldots,\lambda_v))$-continuous, by an
argument similar to that given in the proof of
Theorem~\ref{prod-htpy}. The assertion follows.
\end{proof}

\begin{definition}
\label{long-equiv-def}
\rm{\cite{BoSt1}}
Let $f: (X, \kappa) \rightarrow (Y,\lambda)$ and
$g: (Y, \lambda) \rightarrow (X,\kappa)$ be
continuous functions. Suppose
$g \circ f \simeq^L 1_X$ and
$f \circ g \simeq^L 1_Y$. Then we say
$(X,\kappa)$ and $(Y,\lambda)$  
have the {\em same long homotopy type}, denoted 
$X\simeq_{\kappa,\lambda}^L Y$ or simply $X \simeq^L Y$.
If there exist $x_0 \in X$ and $y_0 \in Y$ such
that $f(x_0)=y_0$, $g(y_0)=x_0$,
the long homotopy
$g \circ f \simeq^L 1_X$
holds $x_0$ fixed, and
the long homotopy
$f \circ g \simeq^L 1_Y$
holds $y_0$ fixed, then 
$(X,x_0,\kappa)$ and $(Y,y_0,\lambda)$ have the 
same {\em pointed long homotopy type}, denoted
$(X,x_0) \simeq_{\kappa,\lambda}^L (Y,y_0)$ or
$(X,x_0) \simeq^L (Y,y_0)$.
$\Box$
\end{definition}

\begin{thm}
\label{long-htpy-type}
Let $X_i \simeq_{\kappa_i,\lambda_i}^L Y_i$, $1 \le i \le v$.
Let $X = \Pi_{i=1}^v X_i$, $Y=\Pi_{i=1}^v Y_i$. Then
$X \simeq_{NP_v(\kappa_1,\ldots,\kappa_v),NP_v(\lambda_1,\ldots,\lambda_v)}^L Y$.
If for each $i$ the long homotopy equivalence
$X_i \simeq_{\kappa_i,\lambda_i}^L Y_i$ is pointed with respect
to $x_i \in X_i$ and $y_i \in Y_i$, then the long homotopy equivalence
$X \simeq_{NP_v(\kappa_1,\ldots,\kappa_v),NP_v(\lambda_1,\ldots,\lambda_v)}^L Y$
is pointed with respect to $x_0=(x_1,\ldots, x_v) \in X$ and
$y_0=(y_1,\ldots, y_v) \in Y$.
\end{thm}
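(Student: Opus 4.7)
The plan is to reduce Theorem~\ref{long-htpy-type} directly to the two previously proven product-preservation results, namely Theorem~\ref{prod-cont} (continuity of product maps under $NP_v$) and Theorem~\ref{long-htpc} (products of long homotopic maps are long homotopic). Structurally the argument will mirror the proof of Theorem~\ref{product-htpy-thm}, replacing ordinary homotopies by long homotopies at each step.

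First, unpack the hypothesis: for each $i \in \{1,\ldots,v\}$ there are continuous maps $f_i : X_i \to Y_i$ and $g_i : Y_i \to X_i$ together with long homotopies $g_i \circ f_i \simeq^L_{\kappa_i,\kappa_i} 1_{X_i}$ and $f_i \circ g_i \simeq^L_{\lambda_i,\lambda_i} 1_{Y_i}$. Form the product maps
\[
F = \Pi_{i=1}^v f_i : X \to Y, \qquad G = \Pi_{i=1}^v g_i : Y \to X.
\]
By Theorem~\ref{prod-cont}, both $F$ and $G$ are continuous with respect to the $NP_v$-adjacencies on the source and target.

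Next, observe the coordinatewise identities $G \circ F = \Pi_{i=1}^v (g_i \circ f_i)$ and $F \circ G = \Pi_{i=1}^v (f_i \circ g_i)$, and that $\Pi_{i=1}^v 1_{X_i} = 1_X$ and $\Pi_{i=1}^v 1_{Y_i} = 1_Y$. Applying Theorem~\ref{long-htpc} to the factor long homotopies $g_i \circ f_i \simeq^L 1_{X_i}$ yields
\[
G \circ F \simeq^L_{NP_v(\kappa_1,\ldots,\kappa_v),\,NP_v(\kappa_1,\ldots,\kappa_v)} 1_X,
\]
and symmetrically $F \circ G \simeq^L 1_Y$. By Definition~\ref{long-equiv-def}, this is precisely $X \simeq^L Y$ with the claimed adjacencies.

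For the pointed assertion, the hypothesis supplies base points $x_i \in X_i$ and $y_i \in Y_i$ with $f_i(x_i) = y_i$, $g_i(y_i) = x_i$, and long homotopies holding these points fixed. Then $F(x_0) = y_0$ and $G(y_0) = x_0$ by construction, and the pointed clause of Theorem~\ref{long-htpc} delivers product long homotopies $G \circ F \simeq^L 1_X$ and $F \circ G \simeq^L 1_Y$ that hold $x_0$ and $y_0$ fixed, respectively. I expect no substantive obstacle: all the technical content (the extension of unequal-length long homotopies to a common $N_x$, the coordinatewise continuity of the slices $h_x$ and $h_t$) was absorbed into Theorem~\ref{long-htpc}, so this proof is a two-line bookkeeping exercise, with the only minor care being the verification that taking products of the various long homotopies preserves the pointedness.
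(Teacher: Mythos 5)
Your proposal is correct and follows exactly the route the paper takes: the paper's own proof is the one-line remark that the result ``follows easily from Definition~\ref{long-equiv-def} and Theorem~\ref{long-htpc},'' and your argument is simply that reduction written out in full, with Theorem~\ref{prod-cont} supplying continuity of $F$ and $G$ and Theorem~\ref{long-htpc} supplying the product long homotopies in both the unpointed and pointed cases.
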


\begin{proof} This follows easily from
Definition~\ref{long-equiv-def} and
Theorem~\ref{long-htpc}.
\end{proof}

\subsection{Real homotopy type}
\begin{definition}
\rm{\cite{BoSt1}}
\label{real-path}
Let $(X,\kappa)$ be a digital image, and $[0,1] \subset \R$ be the unit interval. A function $f:[0,1] \to X$ is a \emph{real [digital] [$\kappa$-]path in $X$} if:
\begin{itemize}
\item there exists $\epsilon_0>0$ such that $f$ is constant on $(0,\epsilon_0)$ with constant value equal or  $\kappa$-adjacent to $f(0)$, and
\item for each $t\in (0,1)$ there exists $\epsilon_t >0$ such that $f$ is constant on each of the intervals $(t-\epsilon_t,t)$ and $(t,t+\epsilon_t)$, and these two constant values are equal or $\kappa$-adjacent, with at least one of them equal to $f(t)$, and
\item there exists $\epsilon_1>0$ such that $f$ is constant on $(1-\epsilon_1, 1)$ with constant value equal or  $\kappa$-adjacent to $f(1)$.
\end{itemize}
If $t=0$ and $f(0) \neq f((0,\epsilon_0))$, or $0<t<1$ and
the two constant 
values $f((t-\epsilon_t,t))$ and $f((t,t+\epsilon_t))$
are not equal, or $t=1$ and $f(1) \neq f((1-\epsilon_1,1))$, we say $t$ is a \emph{jump} of $f$.
\end{definition}

\begin{prop}
\rm{\cite{BoSt1}}
\label{finite-jumps}
Let $p, q \in (X, \kappa)$. 
Let $f : [a, b] \rightarrow X$ be a real $\kappa$-path
from $p$ to $q$. Then the number of jumps of $f$ is 
finite.
\end{prop}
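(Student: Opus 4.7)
The plan is to exploit compactness of $[a,b]$ together with the local constancy structure supplied by Definition~\ref{real-path}. For each $t \in [a,b]$ the definition gives an $\epsilon_t > 0$ such that $f$ is constant on each of $(t-\epsilon_t,t)$ and $(t,t+\epsilon_t)$ (or on the appropriate one-sided version at an endpoint). I will build an open cover of $[a,b]$ whose members each contain at most one jump, extract a finite subcover, and count.

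Concretely, for $t = a$ set $U_a = [a, a+\epsilon_a)$; for $t = b$ set $U_b = (b-\epsilon_b, b]$; and for $t \in (a,b)$, after shrinking $\epsilon_t$ if necessary so that $(t-\epsilon_t, t+\epsilon_t) \subset (a,b)$, set $U_t = (t-\epsilon_t, t+\epsilon_t)$. Then $\{U_t\}_{t \in [a,b]}$ is an open cover of the compact space $[a,b]$, so some finite subfamily $U_{t_1},\ldots,U_{t_k}$ already covers $[a,b]$.

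The crux is then to show that each $U_{t_i}$ contains at most one jump of $f$, namely possibly $t_i$ itself. To verify this, take any $s \in U_{t_i}$ with $s \neq t_i$. Then $s$ lies in one of the open half-intervals on which $f$ is constant, with value $c$ say; in particular $f(s)=c$. Applying Definition~\ref{real-path} at $s$ and choosing $\epsilon_s$ small enough that $(s-\epsilon_s, s+\epsilon_s)$ is contained in that same half-interval, the two local constants at $s$ are both $c$ and agree with $f(s)$, so $s$ is not a jump. Since every jump of $f$ lies in some $U_{t_i}$, and each $U_{t_i}$ contributes at most one jump, $f$ has at most $k$ jumps.

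The only mildly delicate points are bookkeeping: ensuring interior $\epsilon_t$ are shrunk so each $U_t$ stays inside $(a,b)$, and choosing $\epsilon_s$ small enough when $s$ is close to $t_i$. Neither is more than a routine adjustment, so I do not anticipate any substantive obstacle; essentially everything is driven by compactness plus the observation that the local structure around a non-jump is strong enough to propagate ``non-jumpness'' to all nearby points.
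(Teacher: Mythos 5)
Your argument is correct. The paper itself gives no proof of this proposition --- it is quoted from \cite{BoSt1} as a cited result --- so there is nothing internal to compare against, but your compactness argument is the natural one: the cover by the intervals $U_t$ of local one-sided constancy, a finite subcover $U_{t_1},\dots,U_{t_k}$, and the observation that any $s\in U_{t_i}\setminus\{t_i\}$ sits inside an open interval on which $f$ is constant (so both one-sided constant values at $s$ equal $f(s)$ and $s$ is not a jump) together give at most $k$ jumps. The one point worth making explicit is that ``$s$ is a jump'' is independent of the choice of $\epsilon_s$, since shrinking $\epsilon_s$ does not change the two one-sided constant values; you use this implicitly when you pass to a smaller $\epsilon_s$ contained in the half-interval, and it is a one-line remark.
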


\begin{definition}
\rm{\cite{BoSt1}}
\label{real-def}
Let $(X,\kappa)$ and $(Y,\kappa')$ be digital images, and let $f,g: X \to Y$ be $(\kappa,\kappa')$ continuous. Then a \emph{real [digital] homotopy} of $f$ and $g$ is a function $F:X \times [0,1] \to Y$ such that:
\begin{itemize}
\item for all $x\in X$, $F(x,0) = f(x)$ and $F(x,1) = g(x)$
\item for all $x \in X$, the induced function $F_x:[0,1]\to Y$ defined by
\[ F_x(t) = F(x,t)\, \text{for all $t \in [0,1]$} \]
is a real $\kappa$-path in $X$.
\item for all $t\in [0,1]$, the induced function $F_t:X\to Y$ defined by 
\[ F_t(x) = F(x,t) \, \text{for all $x\in X$} \]
is $(\kappa,\kappa')$--continuous.
\end{itemize}
If such a function exists we say $f$ and $g$ are
{\em real homotopic} and write $f\simeq^\R g$.
If there are points $x_0 \in X$ and $y_0 \in Y$ such
that $F(x_0,t)=y_0$
for all $t \in [0,1]$, we say $f$ and $g$ are
{\em pointed real homotopic}.
\end{definition}

\begin{definition}
\rm{\cite{BoSt1}}
\label{real-htpy-equiv}
We say digital images $(X,\kappa)$ and $(Y, \kappa')$
have the {\em same real homotopy type}, denoted
$X \simeq_{\kappa,\kappa'}^\R Y$ or $X \simeq^\R Y$ when
$\kappa$ and $\kappa'$ are understood,
if there are continuous functions $f: X \rightarrow Y$
and $g: Y \rightarrow X$ such that 
$g \circ f \simeq^\R 1_X$ and $f \circ g \simeq^\R 1_Y$.
If there exist $x_0 \in X$ and $y_0 \in Y$ such that
$f(x_0)=y_0$, $g(y_0)=x_0$, and the real homotopies
above are pointed with respect to $x_0$ and $y_0$, we say $X$ and $Y$ have the 
{\em same pointed real homotopy type}, denoted
$(X,x_0) \simeq_{\kappa,\kappa'}^\R (Y,y_0)$ or
$(X,x_0) \simeq^\R (Y,y_0)$.
\end{definition}

\begin{thm}
\label{real-prod-thm}
Suppose
\begin{equation}
\label{real-equiv-hyp}
X_i \simeq_{\kappa_i,\lambda_i}^{\R} Y_i
\mbox{ for } 1 \le i \le v.
\end{equation}
Let $X = \Pi_{i=1}^v X_i$, $Y = \Pi_{i=1}^v Y_i$. Then
\begin{equation}
\label{real-prod-equiv}
X \simeq_{NP_v(\kappa_1,\ldots,\kappa_v),NP_v(\lambda_1,\ldots,\lambda_v)}^{\R} Y.
\end{equation}
If the equivalences~\rm{(\ref{real-equiv-hyp})} are
all pointed with respect to $x_i \in X_i$ and
$y_i \in Y_i$, then the equivalence~\rm{(\ref{real-prod-equiv})} is pointed with respect to
$x_0=(x_1,\ldots,x_v) \in X$ and
$y_0=(y_1,\ldots,y_v) \in Y$.
\end{thm}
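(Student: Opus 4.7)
The plan is to mimic the proofs of Theorems~\ref{product-htpy-thm} and~\ref{long-htpy-type}: take $F=\Pi_{i=1}^v f_i:X\to Y$ and $G=\Pi_{i=1}^v g_i:Y\to X$, where $f_i:X_i\to Y_i$ and $g_i:Y_i\to X_i$ are continuous maps witnessing $X_i\simeq^{\R}Y_i$. These product maps are $NP_v$-continuous by Theorem~\ref{prod-cont}, so the theorem reduces to constructing real homotopies $G\circ F\simeq^{\R}1_X$ and $F\circ G\simeq^{\R}1_Y$ out of the given real homotopies $H_i:X_i\times[0,1]\to X_i$ from $g_i\circ f_i$ to $1_{X_i}$ and $K_i:Y_i\times[0,1]\to Y_i$ from $f_i\circ g_i$ to $1_{Y_i}$. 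I focus on the first; the second is symmetric.

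The main obstacle is that the naive parallel recipe $H(x,t)=(H_1(x_1,t),\ldots,H_v(x_v,t))$ used in Theorems~\ref{prod-htpy} and~\ref{long-htpc} does not produce a real path in the $t$-variable. If two different coordinate paths $H_i(x_i,\cdot)$ and $H_j(x_j,\cdot)$ have jumps at a common $t$, then the left and right side-constants of $H_x$ at $t$ differ in both coordinates; although they remain $NP_v$-adjacent, Definition~\ref{real-path} additionally demands that at least one of the two side-constants equal $H_x(t)$, and if coordinate $i$'s left constant agrees with $H_i(x_i,t)$ while coordinate $j$'s right constant agrees with $H_j(x_j,t)$, neither tuple matches $H_x(t)$.

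To avoid simultaneous jumps I stagger the factor homotopies in time. Partition $[0,1]$ into $2v$ equal subintervals, and for each $i\in\{1,\ldots,v\}$ designate the active subinterval $I_i^a=\bigl[(2i-2)/(2v),\,(2i-1)/(2v)\bigr]$ and the rest subinterval $I_i^r=\bigl[(2i-1)/(2v),\,2i/(2v)\bigr]$. Let $\phi_i:[0,1]\to[0,1]$ equal $0$ on $[0,(2i-2)/(2v)]$, be affine on $I_i^a$ mapping it onto $[0,1]$, and equal $1$ on $[(2i-1)/(2v),1]$. Set
\[ H(x,t)=\bigl(H_1(x_1,\phi_1(t)),\;\ldots,\;H_v(x_v,\phi_v(t))\bigr). \]
Then $H(x,0)=(g_if_i(x_i))_i=G(F(x))$, $H(x,1)=x$, and for each fixed $t$ the slice $H_t$ is a product of $(\kappa_i,\kappa_i)$-continuous maps, hence $(NP_v,NP_v)$-continuous by Theorem~\ref{prod-cont}. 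The key point is that since the active intervals $I_i^a$ are pairwise disjoint, in a one-sided neighborhood of any $t\in[0,1]$ at most one $\phi_i$ is non-constant, so at most one coordinate of $H_x$ can vary there. A short case analysis (interior of an active interval, interior of a rest interval, the boundaries $(2i-2)/(2v)$ and $(2i-1)/(2v)$, and the endpoints $0$ and $1$) then shows that the left and right constants of $H_x$ at any interior $t$ differ in at most one coordinate by a $\kappa_i$-adjacency and so are equal or $NP_v$-adjacent, with at least one of them agreeing with $H_x(t)$; the endpoint conditions hold similarly. Hence $H_x$ is a real $NP_v$-path and $H$ is a real homotopy.

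The analogous construction using the $K_i$'s gives $F\circ G\simeq^{\R}1_Y$, proving~(\ref{real-prod-equiv}). For the pointed assertion, if each $H_i$ holds $x_i$ fixed then $H_i(x_i,\phi_i(t))=x_i$ for every $t$, so $H$ holds $x_0=(x_1,\ldots,x_v)$ fixed; symmetrically the staggered homotopy built from the $K_i$'s holds $y_0=(y_1,\ldots,y_v)$ fixed.
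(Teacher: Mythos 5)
Your proposal is correct, but it takes a genuinely different route from the paper. The paper's proof uses exactly the ``naive parallel recipe'' you reject: it sets $H(x,t)=(h_1(x_1,t),\ldots,h_v(x_v,t))$, observes via Proposition~\ref{finite-jumps} that $H_x$ has only finitely many jumps, and then simply asserts that ``it follows that $H_x$ is a real $NP_v$-path,'' before checking continuity of the slices $H_t$ and concluding. The obstacle you identify is real: if two coordinates jump at the same $t$ with opposite side-matching (one coordinate's value at $t$ agrees with its left constant, another's with its right constant), then neither one-sided constant of $H_x$ equals $H_x(t)$, so the second clause of Definition~\ref{real-path} can fail for the parallel construction; the paper's proof does not address this. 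Your staggering of the factor homotopies onto disjoint active subintervals via the reparametrizations $\phi_i$ guarantees that at most one coordinate varies near any $t$, which cleanly restores the real-path condition while preserving the endpoint values, the continuity of the slices $H_t$ (by Theorem~\ref{prod-cont}), and pointedness. So your argument is more careful and in fact repairs a gap in the published proof; an alternative, closer in spirit to the paper, would be to first normalize each $h_i$ so that its value at every jump equals the left-hand constant (replacing $h_i(x_i,t)$ by $\lim_{s\to t^-}h_i(x_i,s)$ for $t>0$), after which the parallel construction does yield a real homotopy.
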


\begin{proof}
We give a proof for the unpointed assertion.
With minor modifications, the same argument
yields the pointed assertion.

By hypothesis, there exist continuous functions
$f_i: X_i \to Y_i$, $g_i: Y_i \to X_i$
and real homotopies
$h_i: X_i \times [0,1] \to X_i$ from $g_i \circ f_i$ to $1_{X_i}$,
$k_i: Y_i \times [0,1] \to X_i$ from $f_i \circ g_i$ to $1_{Y_i}$.

Let $f = \Pi_{i=1}^v f_i: X \to Y$. Let
$g = \Pi_{i=1}^v g_i: Y \to X$. 
For $x=(x_1,\ldots,x_v) \in X$ with
$x_i \in X_i$, let
$H: X \times [0,1] \to X$ be defined by
\[H(x,t) = (h_1(x_1,t),\ldots,h_v(x_v,t)).
\]
Then $H(x,0)=G \circ F(x)$ and $H(x,1)=x$.

For $x \in X$, the induced function
$H_x$ has jumps only at the finitely
many (by Proposition~\ref{finite-jumps}) jumps of the functions
$h_i$ by the $x_i$. It follows that
$H_x$ is a real $NP_v(\lambda_1,\ldots,\lambda_v)$-path in $Y$.

Let $x'=(x_1',\ldots,x_v')$ be
$NP_v(\kappa_1,\ldots,\kappa_v)$-adjacent to
$x$ in $X$. Then, for any $t \in [0,1]$,
$H_t(x')=(H_1(x_1',t),\ldots,H_v(x_v',t))$
is $NP_v(\lambda_1,\ldots,\lambda_v)$-adjacent to
$H_t(x)=(H_1(x_1,t),\ldots,H_v(x_v,t))$, since each $H_i$ is a real homotopy. Hence
$H_t$ is continuous.

Thus, $H$ is a real homotopy from
$G \circ F$ to $1_X$. A similar argument
lets us conclude that $F \circ G \simeq^{\R} 1_Y$. Therefore, $X \simeq^{\R} Y$.
\end{proof}

\section{$NP_v$ and retractions}
\begin{definition}
\rm{\cite{Borsuk,Boxer94}}
\label{retract-def}
Let $Y \subset (X,\kappa)$. A $(\kappa,\kappa)$-continuous
function $r: X \to Y$ is a {\em retraction}, and 
$A$ is a {\em retract of} $X$, if $r(y)=y$ for all
$y \in Y$. \qed
\end{definition}

\begin{thm}
\label{ret-thm}
Let $A_i \subset (X_i,\kappa_i)$, $i \in \{1, \ldots, v\}$. Then
$A_i$ is a retract of $X_i$ for all $i$ if and only if
$\Pi_{i=1}^v A_i$ is a retract of 
$(\Pi_{i=1}^v X_i, NP_v(\kappa_1, \ldots, \kappa_v))$.
\end{thm}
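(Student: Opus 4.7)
My plan is to prove the two directions separately, with the forward direction following quickly from the product-map machinery already in the paper, and the reverse direction using the standard ``slice embedding'' trick.

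For the forward direction, suppose $r_i : X_i \to A_i$ is a retraction for each $i$. Then the product map $r = \Pi_{i=1}^v r_i$ from $(\Pi_{i=1}^v X_i, NP_v(\kappa_1,\ldots,\kappa_v))$ to $(\Pi_{i=1}^v A_i, NP_v(\kappa_1,\ldots,\kappa_v))$ is $(NP_v,NP_v)$-continuous by Theorem~\ref{prod-cont}. Since $r_i(a) = a$ whenever $a \in A_i$, we immediately get $r(a_1,\ldots,a_v) = (a_1,\ldots,a_v)$ for every $(a_1,\ldots,a_v) \in \Pi_{i=1}^v A_i$, so $r$ is a retraction.

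For the reverse direction, suppose $r : (\Pi_{i=1}^v X_i, NP_v(\kappa_1,\ldots,\kappa_v)) \to \Pi_{i=1}^v A_i$ is a retraction. If any $A_j$ is empty then $\Pi A_i$ is empty, which forces $\Pi X_i$ to be empty, and then every $X_i$ with a nonempty factor is irrelevant, while the empty $A_j$ is trivially a retract of $X_j$ (since $X_j$ must be empty too, as otherwise $\Pi X_i$ would be nonempty while $\Pi A_i$ is empty, making a retraction impossible). So assume each $A_j$ is nonempty and fix, for each $i$, points $a_j \in A_j$ for $j \neq i$. Define $I_i : X_i \to \Pi_{j=1}^v X_j$ by
\[ I_i(x) = (a_1,\ldots,a_{i-1},x,a_{i+1},\ldots,a_v), \]
exactly as in the proof of Theorem~\ref{prod-iso}; this map is $(\kappa_i, NP_v(\kappa_1,\ldots,\kappa_v))$-continuous because coordinates other than the $i$-th are constant. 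Let $p_i : \Pi_{j=1}^v A_j \to A_i$ be the projection, which is continuous by Theorem~\ref{projection-cont}. Define
\[ r_i = p_i \circ r \circ I_i : X_i \to A_i. \]
By Theorem~\ref{composition}, $r_i$ is $(\kappa_i,\kappa_i)$-continuous.

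It remains to verify $r_i$ fixes $A_i$. For $x \in A_i$ we have $I_i(x) = (a_1,\ldots,a_{i-1},x,a_{i+1},\ldots,a_v) \in \Pi_{j=1}^v A_j$, so because $r$ is a retraction, $r(I_i(x)) = I_i(x)$, and then $p_i(I_i(x)) = x$. Hence $r_i(x) = x$, so $A_i$ is a retract of $X_i$. I expect no serious obstacle here; the only mild subtlety is the empty-factor case, which I handle above by observing that a retraction onto an empty set forces the domain to be empty.
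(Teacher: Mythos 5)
Your proof is correct and follows essentially the same route as the paper's: the forward direction via Theorem~\ref{prod-cont} applied to the product of the retractions, and the reverse direction via the slice embedding $I_i$ composed with $r$ and the projection $p_i$. Your extra care with the empty-factor case is a minor refinement the paper omits, but the argument is otherwise the same.
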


\begin{proof}
Suppose, for all $i$, $A_i$ is a retract of $X_i$. Let
$r_i: X_i \to A_i$ be a retraction. Then, by Theorem~\ref{prod-cont},
$\Pi_{i=1}^v r_i: \Pi_{i=1}^v X_i \to \Pi_{i=1}^v A_i$ is continuous, and
therefore is easily seen to be a retraction.

Suppose there is a retraction $r: \Pi_{i=1}^v X_i \to \Pi_{i=1}^v A_i$.
We construct retractions $r_j: X_j \to A_j$ as follows. Let $a_i \in A_i$.
Define $f_j: X_j \to \Pi_{i=1}^v X_i$ by
\[ f_j(x) = (a_1, \ldots, a_{j-1}, x, a_{j+1}, \ldots, a_v). \]
Clearly, $f_j$ is continuous. Then
$r_j = p_j \circ r \circ f_j$ is continuous, by Theorem~\ref{composition} and
Corollary~\ref{projection-cont}, and is easily seen to be a retraction.
\end{proof}

Let $A \subset (X,\kappa)$. We say $A$ is
a {\em deformation retract of} $X$ if there is
a $\kappa$-homotopy 
$H: X \times [0,m]_{\Z} \to X$ 
from $1_X$ to a retraction of $X$ to $A$.
If $H(a,t)=a$ for all
$(a,t) \in Y \times [0,m]_{\Z}$, we say $H$
is a {\em strong} deformation and
$A$ is a {\em strong deformation retract of}
$X$. We have the following.

\begin{thm}
\label{def-ret-thm}
Let $A_i \subset (X_i,\kappa_i)$, $i \in \{1, \ldots, v\}$. Then
$A_i$ is a (strong) deformation retract of $X_i$ for all $i$ if and only if
$A=\Pi_{i=1}^v A_i$ is a (strong) deformation retract of 
$X=(\Pi_{i=1}^u X_i, NP_v(\kappa_1, \ldots, \kappa_v))$.
\end{thm}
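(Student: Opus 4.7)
The plan is to prove both directions by mimicking the pattern already used in Theorems~\ref{ret-thm} and~\ref{prod-htpy}: one builds a product homotopy from factor homotopies in the forward direction, and one extracts factor homotopies by ``slicing'' the ambient homotopy in the reverse direction. The strong case will fall out with almost no extra work in each direction, by tracking which points are fixed throughout.

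For the forward direction, suppose each $A_i$ is a (strong) deformation retract of $X_i$, witnessed by a retraction $r_i : X_i \to A_i$ and a homotopy $H_i : X_i \times [0,m_i]_{\Z} \to X_i$ from $1_{X_i}$ to $r_i$. To combine these into one homotopy I would first equalize time lengths using the padding trick from the proof of Theorem~\ref{prod-htpy}: set $M = \max_i m_i$ and extend each $H_i$ to $H_i' : X_i \times [0,M]_{\Z} \to X_i$ by $H_i'(x,t) = H_i(x,m_i)$ for $t \geq m_i$. Then define
\[
H : X \times [0,M]_{\Z} \to X, \qquad
H\bigl((x_1,\ldots,x_v),t\bigr) = \bigl(H_1'(x_1,t),\ldots,H_v'(x_v,t)\bigr).
\]
Continuity of $H$ as an $(NP_1(NP_v(\kappa_1,\ldots,\kappa_v),c_1),NP_v(\kappa_1,\ldots,\kappa_v))$-map follows by essentially the same product-of-continuous-maps argument used in Theorem~\ref{prod-htpy}. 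At $t=0$ we get $1_X$, and at $t=M$ we get $\Pi_{i=1}^v r_i$, which is a retraction of $X$ onto $A$ by Theorem~\ref{ret-thm}. For the strong case, each $H_i$ holds $A_i$ pointwise fixed, so $H$ holds $A = \Pi_{i=1}^v A_i$ pointwise fixed.

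For the reverse direction, suppose $H : X \times [0,m]_{\Z} \to X$ is a (strong) deformation retraction from $1_X$ to a retraction $r : X \to A$. Imitating the reverse-direction construction in Theorem~\ref{ret-thm}, fix for each $j$ a basepoint $a_i \in A_i$ for $i \neq j$, let $f_j : X_j \to X$ be the slice inclusion $f_j(x) = (a_1,\ldots,a_{j-1},x,a_{j+1},\ldots,a_v)$ (which is $(\kappa_j, NP_v(\kappa_1,\ldots,\kappa_v))$-continuous by direct checking), and define
\[
H_j : X_j \times [0,m]_{\Z} \to X_j, \qquad H_j(x,t) = p_j\bigl(H(f_j(x),t)\bigr).
\]
Continuity of $H_j$ follows from Theorems~\ref{composition} and~\ref{projection-cont} together with continuity of $f_j \times \mathrm{id}$. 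One then verifies $H_j(x,0) = p_j(f_j(x)) = x$; $H_j(\cdot,m)$ lands in $A_j$ because $r(f_j(x)) \in A$; and for $x \in A_j$, $f_j(x) \in A$ so $r(f_j(x)) = f_j(x)$, giving $H_j(x,m) = x$, so $H_j(\cdot,m) : X_j \to A_j$ is a retraction. For the strong case, if $H$ holds $A$ fixed then for $x \in A_j$ the whole track $H(f_j(x),t) = f_j(x)$ is constant, hence $H_j(x,t) = x$ for all $t$.

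The main obstacle is really just bookkeeping: ensuring that the projection-and-slice construction in the reverse direction gives a genuine homotopy $X_j \times [0,m]_{\Z} \to X_j$ (not merely a map into $X$), which is handled by composing with $p_j$ and using that $r$ lands in $A$; and handling the unequal domain lengths $m_i$ in the forward direction, which is a direct reuse of the padding argument from Theorem~\ref{prod-htpy}. Neither step involves new ideas beyond what has already appeared in this section.
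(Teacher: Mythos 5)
Your proposal is correct and follows essentially the same route as the paper: the forward direction combines the padding construction of Theorem~\ref{prod-htpy} with Theorem~\ref{ret-thm}, and the reverse direction uses exactly the paper's slice-and-project homotopy $H_j(x,t)=p_j(H(f_j(x),t))$, with the same bookkeeping for the strong case. No substantive differences.
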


\begin{proof}
Suppose $A_i$ is a deformation retract
of $X_i$, $1 \leq i \leq v$. It follows from
Theorems~\ref{prod-htpy} and~\ref{ret-thm} and that $A$ is a 
deformation retract of $X$. If each $A_i$ is a strong
deformation retract of $X_i$, then by using the
argument in the proof of Theorem~\ref{prod-htpy} we
can construct a homotopy from $1_X$ to a retraction
of $X$ to $A$ that holds every point of $a$ fixed,
so $A$ is a strong deformation retract of $X$.

Suppose $A$ is a (strong) deformation retract of $X$.
This means there is a homotopy
$H: X \times [0,m]_{\Z} \to X$ from $1_X$ to a
retraction $r$ of $X$ onto $A$ (such that
$H(a,t)=a$ for all $(a,t) \in A \times [0,m]_{\Z}$).
Let $f_i: X_i \to X$ be as in Theorem~\ref{ret-thm}.
Let $H_i: X_i \times [0,m]_{\Z} \to X_i$ be
defined by
$H_i(x,t)=p_i(H(f_i(x),t))$. Then $H_i$ is a homotopy
between $p_i \circ f_i = 1_{X_i}$ and
$p_i \circ r \circ f_i$ (such that $H_i(a_i,t)=a_i$
for all $a_i \in A_i$). Since 
\[ p_i \circ r \circ f_i(X_j) \subset
p_i \circ r(X) = p_i(A) = A_i \]
and $a \in A_i$ implies
$p_i \circ r \circ f_i(a)=a$,
$p_i \circ r \circ f_i$ is a 
retraction. Thus, $A_i$ is a (strong)
deformation retract of $X_i$.
\end{proof}

\section{$NP_v$ and the digital Borsuk-Ulam theorem}
The Borsuk-Ulam Theorem of Euclidean topology 
states that if $f: S^n \to \R^n$ is a
continuous function, where $\R^n$ is
$n$-dimensional Euclidean space and
$S^n$ is the unit sphere in $\R^{n+1}$, i.e.,
\[ S^n = \{(x_1,\ldots, x_{n+1})\in \R^{n+1} \, | \,
            \sum_{i=1}^{n+1} x_i^2 = 1\},
\]
then there exists $x \in S^n$ such that
$f(-x)=f(x)$.  A ``layman's example" of this
theorem for $n=2$ is that there are
opposite points $x,-x$ on the earth's surface with
the same temperature and the same
barometric pressure.

We say a set $X \subset \Z^n$ is {\em symmetric
with respect to the origin} if for every
$x \in X$, $-x \in X$.

The assertion analogous to the Borsuk-Ulam
theorem is not generally true
in digital topology. An example is given
in~\cite{Boxer10} of a continuous function
$f: S \to \Z$ from a simple closed curve
$S \subset (\Z^2, c_2)$, a digital analog of
$S^1$, into the digital line $\Z$, with $S$
symmetric with respect to the origin, such that
$f(x) \neq f(-x)$ for all $x \in S$. However,
the papers~\cite{Boxer10,Staecker16} give
conditions under which a continuous
function $f$ from a digital version $S_n$ of
$S^n$ to $\Z^n$ must have a point $x \in S_n$
for which $f(x)$ and $f(-x)$ are equal or
adjacent.

In particular,~\cite{Staecker16} uses the 
``boundary" of a digital box
as a digital model of a Euclidean sphere.
Let $B_n = \Pi_{i=1}^n [-a_i,a_i]_{\Z}$, for
$a_i \in \N$. Let
\[ \delta B_n = \bigcup_{i=1}^n \{(x_1,\ldots,x_n) \in B_n \, | \, x_i \in \{-a_i,a_i\}\}.
\]

\begin{thm}
\label{BU-lit}
We have the following.
\begin{itemize}
\item \rm{\cite{Boxer10}} Let $(S,\kappa)$ be a 
      digital simple closed curve in $Z^n$ such
      that $S$ is symmetric with respect to the
      origin. Let $f: S \to \Z$ be a
      $(\kappa,c_1)$-continuous function. Then
      for some $x \in S$,
      $f(x)$ and $f(-x)$ are equal or
      $c_1$-adjacent, i.e., $|f(x)-f(-x)| \leq 1$.
\item \rm{\cite{Staecker16}}
      Let $u \in \{1,n-1\}$ and let$f: \delta B_n \to \Z^{n-1}$ be a
      $(c_n,c_u)$-continuous function. Then for
      some $x \in \delta B_n$, $f(x)$ and $f(-x)$ 
      are equal or $c_u$-adjacent. \qed
\end{itemize}
\end{thm}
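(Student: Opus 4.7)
The plan is to handle the two items separately, since they come from different sources and use different digital models of spheres. In both, the basic idea is the same: introduce the auxiliary ``antipodal difference'' function $g(x) = f(x) - f(-x)$, note that $g(-x) = -g(x)$ (so $g$ is antisymmetric), and conclude that $g$ must take a suitably small value somewhere.

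For the first item, $g : S \to \Z$ is $(\kappa,c_1)$-antisymmetric. If $g \equiv 0$ there is nothing to prove, so assume $g$ takes both strictly positive and strictly negative values. Since $S$ is symmetric about the origin and $\kappa$-adjacency in $\Z^n$ is preserved by negation, the map $x \mapsto -x$ restricts to an isomorphism of $(S,\kappa)$, so both $f$ and $f \circ (-\mathrm{id})$ vary by at most $1$ across adjacent points of $S$. Hence $|g(x) - g(x')| \leq 2$ whenever $x,x'$ are $\kappa$-adjacent. Parameterize $S$ as a cyclic $\kappa$-path $x_0, x_1, \ldots, x_m = x_0$; the sequence $g(x_0), \ldots, g(x_m)$ contains values of opposite sign (antipodal pairs give opposite values) and consecutive terms differ by at most $2$, so a discrete intermediate value argument forces some $g(x_i) \in \{-1,0,1\}$, which is exactly the desired conclusion.

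For the second item, set $g : \delta B_n \to \Z^{n-1}$ by $g(x) = f(x) - f(-x)$ again. The goal is to find $x \in \delta B_n$ with $g(x)$ either zero or within $c_u$-distance one of zero. In the case $u = n-1$ the codomain tolerance is the ``king-move'' neighborhood, i.e.\ $\|g(x)\|_\infty \leq 1$, which is loose enough that one can reduce to the first item: pick an antipodally symmetric $c_n$-simple closed curve that traverses $\delta B_n$ passing through a symmetric pair of antipodal points, project onto each coordinate of $\Z^{n-1}$, apply the first item coordinate-by-coordinate, and combine. For the case $u=1$ the codomain tolerance is tight, and I would induct on $n$: decompose $\delta B_n$ into the $2n$ antipodally paired facets $\{x_i = \pm a_i\}$, consider $f$ restricted to a chosen antipodal pair, and either locate the desired $x$ on the pair directly or reduce to an equatorial $\delta B_{n-1}$-like set where a lower-dimensional Borsuk--Ulam statement applies.

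The main obstacle will be the $u=1$ case of the second item: the tight $c_1$-adjacency in $\Z^{n-1}$ leaves essentially no slack for a discrete-approximation argument, so the inductive reduction must produce genuinely antipodal witnesses (not merely nearby ones) at each stage. In my plan this is exactly the point where an antipodal combinatorial lemma of Tucker type for labelled cells of $\delta B_n$ seems unavoidable, and verifying that content in the digital setting is the real work; the antisymmetric-function bookkeeping and the reduction to lower dimension are comparatively routine once it is in hand.
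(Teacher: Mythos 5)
First, a framing remark: the paper offers no proof of this theorem at all --- it is stated purely as a quotation of results from \cite{Boxer10} and \cite{Staecker16}, ending in \qed with no argument --- so your attempt can only be measured against those sources. Your treatment of the first item is essentially the correct and standard argument of \cite{Boxer10}: since negation is a $\kappa$-automorphism of the symmetric simple closed curve $S$, the antisymmetric difference $g(x)=f(x)-f(-x)$ changes by at most $2$ across adjacent points of the cycle; unless $g$ already takes a value in $\{-1,0,1\}$ it takes a value $\geq 2$ at some $x$ and hence $\leq -2$ at $-x$, and walking along the cycle from $x$ to $-x$ the first term that drops below $2$ must land in $\{0,1\}$. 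That part is sound and complete.

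The genuine gap is the entire second item. For $u=n-1$ you propose applying the first item coordinate-by-coordinate along a symmetric curve in $\delta B_n$ and then ``combining,'' but the first item produces a \emph{different} witness $x^{(j)}$ for each coordinate $j$ of $\Z^{n-1}$, and nothing in your sketch explains why a single $x$ should satisfy $|f_j(x)-f_j(-x)|\leq 1$ for all $j$ simultaneously; that combine step is precisely where the content of the theorem lives, and it is not supplied. For $u=1$ you concede outright that a Tucker-type antipodal lemma for the cells of $\delta B_n$ is needed and that verifying it ``is the real work.'' That self-assessment is accurate: the argument in \cite{Staecker16} is genuinely nontrivial, and the very restriction of the statement to $u\in\{1,n-1\}$ signals that no soft reduction handles it. As written, you have proved the first bullet and only outlined a program --- with its central lemma missing --- for the second.
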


Notice that for $X_i \subset \Z^{n_i}$, $\Pi_{i=1}^v X_i$ is symmetric with respect to
the origin of $\Z^{\sum_{i=1}^v n_i}$ if and 
only if $X_i$ is symmetric with respect to
the origin of $\Z^{n_i}$ for all
indices~$i$.

Suppose $m,n \in \N$, $1 \leq m \leq n$. Let's say a digital image
$S \subset \Z^{n+1}$ that is symmetric with
respect to the origin
{\em has the 
$(m, \kappa,\lambda)$-Borsuk-Ulam property} if for every
$(\kappa,\lambda)$-continuous function
$f: S \to \Z^m$ there exists $x \in X$ such
that $f(x)$ and $f(-x)$ are equal or
$\lambda$-adjacent in $\Z^n$.

We have the following.

\begin{thm}
Suppose
\begin{itemize}
\item $v>1$,
\item $S_i \subset \Z^{n_i+1}$ is symmetric
with respect to the origin of $\Z^{n_i+1}$
for $1 \leq i \leq v$, and
\item $\Pi_{i=1}^v S_i$ has the
$(m,NP_v(\kappa_1,\ldots,\kappa_v),
  NP_v(\lambda_1,\ldots,\lambda_v))$-Borsuk-Ulam property for 
some adjacencies $\kappa_i$ for $\Z^{n_i+1}$ and
$\lambda_i$ for $\Z^{n_i}$, where
$m=\sum_{i=1}^v n_i$.
\end{itemize}
Then, for all
$i$, $S_i$ has the $(n_i,\kappa_i,\lambda_i)$-Borsuk-Ulam property.
\end{thm}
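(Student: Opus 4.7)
The plan is to argue by contrapositive: I will show that if some factor $S_j$ fails to have the $(n_j,\kappa_j,\lambda_j)$-Borsuk-Ulam property, then the product $\Pi_{i=1}^v S_i$ fails the $(m,NP_v(\kappa_1,\ldots,\kappa_v),NP_v(\lambda_1,\ldots,\lambda_v))$-Borsuk-Ulam property, contradicting the hypothesis. So fix $j$ and suppose $f_j\colon S_j\to\Z^{n_j}$ is a $(\kappa_j,\lambda_j)$-continuous function such that, for every $x\in S_j$, the points $f_j(x)$ and $f_j(-x)$ are neither equal nor $\lambda_j$-adjacent. Identify $\Z^m$ with $\Z^{n_1}\times\cdots\times\Z^{n_v}$, and define $F\colon \Pi_{i=1}^v S_i \to \Z^m$ by
\[
F(x_1,\ldots,x_v) \;=\; (0,\ldots,0,\,f_j(x_j),\,0,\ldots,0),
\]
where $f_j(x_j)\in\Z^{n_j}$ occupies the $j$-th block of coordinates and the other $v-1$ blocks are zero vectors. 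Note $\Pi_{i=1}^v S_i$ is symmetric with respect to the origin of $\Z^{\sum(n_i+1)}$ by the observation preceding the theorem.

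Next I verify that $F$ is $(NP_v(\kappa_1,\ldots,\kappa_v),NP_v(\lambda_1,\ldots,\lambda_v))$-continuous. Write $F = \iota\circ f_j\circ p_j$, where $p_j\colon \Pi_{i=1}^v S_i \to S_j$ is the projection and $\iota\colon\Z^{n_j}\to\Z^m$ is the zero-padded inclusion into the $j$-th block. By Theorem~\ref{projection-cont}, $p_j$ is continuous, and $f_j$ is continuous by assumption. For $\iota$, if $y,y'\in\Z^{n_j}$ are $\lambda_j$-adjacent, then $\iota(y)$ and $\iota(y')$ differ only in the $j$-th block (where they are $\lambda_j$-adjacent) and agree on all other blocks, so by Definition~\ref{NP_u-def} they are $NP_v(\lambda_1,\ldots,\lambda_v)$-adjacent (this uses one index of adjacency, which lies in the allowed range $[1,v]$). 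Therefore $\iota$ is continuous, and so $F$ is continuous by Theorem~\ref{composition}.

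Applying the assumed Borsuk-Ulam property of $\Pi_{i=1}^v S_i$ to $F$ yields a point $p=(x_1,\ldots,x_v)\in\Pi_{i=1}^v S_i$ such that $F(p)$ and $F(-p)=F(-x_1,\ldots,-x_v)$ are equal or $NP_v(\lambda_1,\ldots,\lambda_v)$-adjacent in $\Z^m$. Since $F(p)$ and $F(-p)$ agree on every block other than the $j$-th, Definition~\ref{NP_u-def} forces either $f_j(x_j)=f_j(-x_j)$ or $f_j(x_j)$ and $f_j(-x_j)$ to be $\lambda_j$-adjacent in $\Z^{n_j}$. Either alternative contradicts our choice of $f_j$, completing the proof.

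The step most worth checking carefully is the continuity of the zero-padded inclusion $\iota$: one must confirm that having exactly one block of adjacency (and the rest equal) is indeed permitted by $NP_v$, which it is because Definition~\ref{NP_u-def} allows \emph{between $1$ and $v$} indices of adjacency. The rest is routine, relying only on Theorems~\ref{composition} and~\ref{projection-cont}.
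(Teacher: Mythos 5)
Your proof is correct and follows essentially the same route as the paper's: both apply the product Borsuk--Ulam hypothesis to a product test map and read off the conclusion coordinatewise. The only cosmetic difference is that you argue by contrapositive, taking the product of $f_j$ with constant zero maps on the other factors (verifying continuity via $\iota \circ f_j \circ p_j$), whereas the paper directly takes the product of arbitrary continuous $f_i$ and cites Theorem~\ref{prod-cont}.
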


\begin{proof}
Notice $\Pi_{i=1}^v S_i \subset \Z^{m+v}$.
Let $f: \Pi_{i=1}^v S_i \to \Z^m$ be
any function that is
$(NP_v(\kappa_1, \ldots, \kappa_v),
  NP_v(\lambda_1, \ldots, \lambda_v))$-continuous. By hypothesis,
there exists $x \in \Pi_{i=1}^v S_i$
such that $f(x)$ and $f(-x)$ are
equal or $NP_v(\lambda_1, \ldots, \lambda_v)$-adjacent.

In particular, we can let $f$ be
the product of arbitrary continuous functions
$f_i:S_i \to \Z^{n_i}$, since
if $f_i: S_i \to \Z^{n_i}$ is
$(\kappa_i,\lambda_i)$-continuous, then
by Theorem~\ref{prod-cont},
$f=\Pi_{i=1}^v f_i$ is
$(NP_v(\kappa_1,\ldots,\kappa_v),
  NP_v(\lambda_1,\ldots,\lambda_v))$-continuous. Therefore,
there exists $x=(x_1,\ldots,x_v) \in X$
where $x_i \in S_i$ such that
$f(x)=(f_1(x_1), \ldots, f_v(x_v))$ and $f(-x)=(f_1(-x_1), \ldots, f_v(-x_v))$ are equal or are
$NP_v(\lambda_1,\ldots,\lambda_v)$-adjacent. Hence, for all
indices~$i$, $f_i(x_i)$ and $f_i(-x_i)$
are equal or $\lambda_i$-adjacent.

Since the $f_i$ were arbitrarily chosen, the assertion follows.
\end{proof}

\section{$NP_u$ and the approximate fixed point property}
In both topology and digital topology,
\begin{itemize}
\item a {\em fixed point} of a continuous function
      $f: X \to X$ is a point $x \in X$ satisfying $f(x)=x$;
\item if every continuous $f: X \to X$ has a fixed point,
      then $X$ has the {\em fixed point property} (FPP).
\end{itemize}
However, a digital image $X$ has the FPP if and only
if $X$ has a single point~\cite{BEKLL}. Therefore, it
turns out that the {\em approximate fixed point property} is
more interesting for digital images.

\begin{definition}
\rm{\cite{BEKLL}}
\label{approxFP}
A digital image $(X,\kappa)$ has the
{\em approximate fixed point property (AFPP)} if every
continuous $f: X \to X$ has an {\em approximate fixed point},
i.e., a point $x \in X$ such that $f(x)$ is equal or
$\kappa$-adjacent to $x$. \qed
\end{definition}

A number of results concerning the AFPP were presented
in~\cite{BEKLL}, including the following.

\begin{thm}
\rm{\cite{BEKLL}}
\label{AFPP-iso}
Suppose $(X,\kappa)$ has the AFPP. Let $h : X \to Y$ be a
$(\kappa,\lambda)$-isomorphism. Then
$(Y,\lambda)$ has the AFPP. \qed
\end{thm}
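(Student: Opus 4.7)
The plan is to transfer any continuous self-map of $Y$ back to a continuous self-map of $X$ via the isomorphism, apply the AFPP of $X$, and push the resulting approximate fixed point forward to $Y$ using the fact that isomorphisms preserve adjacency.

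More concretely, first I would let $f: Y \to Y$ be an arbitrary $(\lambda,\lambda)$-continuous function and form the conjugate map $g = h^{-1} \circ f \circ h : X \to X$. Since $h$ is a $(\kappa,\lambda)$-isomorphism, its inverse $h^{-1}$ is $(\lambda,\kappa)$-continuous, so by Theorem~\ref{composition} the composite $g$ is $(\kappa,\kappa)$-continuous. Applying the hypothesis that $(X,\kappa)$ has the AFPP to $g$ yields a point $x \in X$ such that $g(x)$ is equal or $\kappa$-adjacent to $x$.

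Next I would set $y = h(x) \in Y$ and check that $y$ is an approximate fixed point of $f$. Because $h$ is an isomorphism, it preserves the adjacency relation: if $x$ and $g(x)$ are equal or $\kappa$-adjacent, then $h(x)$ and $h(g(x))$ are equal or $\lambda$-adjacent. Substituting the definition of $g$, we have $h(g(x)) = h\bigl(h^{-1}(f(h(x)))\bigr) = f(h(x)) = f(y)$, so $y$ and $f(y)$ are equal or $\lambda$-adjacent, as required.

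There is no real obstacle here; the only step that needs any care is confirming that the isomorphism transfers adjacency in both directions so that approximate fixed points correspond under $h$. Everything else is a routine application of the composition theorem (Theorem~\ref{composition}) and the definition of the AFPP (Definition~\ref{approxFP}).
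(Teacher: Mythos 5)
Your proof is correct: the conjugation argument $g = h^{-1}\circ f\circ h$, followed by pushing the approximate fixed point forward via the continuity of $h$ (Theorem~\ref{cont-by-adj}), is exactly the standard argument for this result. The paper itself gives no proof, citing \cite{BEKLL} instead, and your argument is the expected one from that source, so there is nothing to compare beyond noting that it checks out.
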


\begin{thm}
\rm{\cite{BEKLL}}
\label{AFPP-ret}
Suppose $Y$ is a retract of $(X,\kappa)$. If $(X,\kappa)$
has the AFPP, then $(Y,\kappa)$ has the AFPP. \qed
\end{thm}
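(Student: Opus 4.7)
The plan is to leverage the retraction to transport an arbitrary continuous self-map of $Y$ up to a continuous self-map of $X$, apply the AFPP on $X$, and then push the resulting approximate fixed point back down to $Y$ via the retraction.

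More specifically, let $r:X\to Y$ be a retraction and let $i:Y\hookrightarrow X$ denote the inclusion, which is trivially $(\kappa,\kappa)$-continuous. Given an arbitrary continuous $f:Y\to Y$, I would form the composition $g = i\circ f\circ r : X\to X$, which is $(\kappa,\kappa)$-continuous by Theorem~\ref{composition}. Since $(X,\kappa)$ has the AFPP, there exists $x\in X$ such that $g(x)=f(r(x))$ is equal or $\kappa$-adjacent to $x$.

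The next step is to propagate this approximate coincidence down to $Y$ by applying $r$. Because $r$ is $(\kappa,\kappa)$-continuous, Theorem~\ref{cont-by-adj} guarantees that $r(f(r(x)))$ is equal or $\kappa$-adjacent to $r(x)$. The key observation is that $f(r(x))\in Y$ and $r$ is the identity on $Y$, so $r(f(r(x)))=f(r(x))$. Setting $y=r(x)\in Y$, we conclude that $f(y)$ is equal or $\kappa$-adjacent to $y$, which is exactly the approximate fixed point we need for $(Y,\kappa)$.

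There is essentially no obstacle here; the only subtle point is remembering that the AFPP as stated for $(Y,\kappa)$ uses the adjacency $\kappa$ inherited from $X$, so no change of adjacency occurs and the retraction $r$ faithfully transports adjacency. Once this is observed, the argument is a direct analogue of the classical Euclidean fact that the fixed point property passes to retracts.
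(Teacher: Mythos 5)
Your proof is correct and is the standard argument for this result (the paper itself omits the proof, citing \cite{BEKLL}, where essentially this same retraction-transport argument appears): lift $f$ to $i\circ f\circ r$, apply the AFPP on $X$, and use continuity of $r$ together with $r|_Y=1_Y$ to push the approximate fixed point down to $Y$. All steps check out, including the key observation that $r(f(r(x)))=f(r(x))$ because $f(r(x))\in Y$.
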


The following is a generalization of Theorem~5.10
of~\cite{BEKLL}.

\begin{thm}
Let $(X_i,\kappa_i)$ be digital images, $1 \leq i \leq v$.
Then for any $u \in \Z$ such that
$1 \leq u \leq v$, if
$(\Pi_{i=1}^v X_i, NP_u(\kappa_1, \ldots, \kappa_v))$
has the AFPP then
$(X_i,\kappa_i)$ has the AFPP for all $i$.
\end{thm}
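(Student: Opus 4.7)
The plan is to mimic (and lightly generalize) the argument of Theorem~5.10 of \cite{BEKLL}. Fix an index $i \in \{1,\ldots,v\}$ and an arbitrary $(\kappa_i,\kappa_i)$-continuous $f_i: X_i \to X_i$; the goal is to produce an approximate fixed point of $f_i$ from an approximate fixed point of a suitably constructed self-map of the product. We may assume each $X_j$ is nonempty (otherwise the product is empty and the statement is vacuous), and choose basepoints $a_j \in X_j$ for $j \neq i$.

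Next I would define $F: \Pi_{j=1}^v X_j \to \Pi_{j=1}^v X_j$ by
\[
F(x_1,\ldots,x_v) = (a_1,\ldots,a_{i-1},\,f_i(x_i),\,a_{i+1},\ldots,a_v).
\]
To see that $F$ is $(NP_u,NP_u)$-continuous I would invoke Theorem~\ref{cont-by-adj}: if $p=(x_1,\ldots,x_v)$ and $p'=(x_1',\ldots,x_v')$ are $NP_u(\kappa_1,\ldots,\kappa_v)$-adjacent, then in particular $x_i$ and $x_i'$ are equal or $\kappa_i$-adjacent, so $f_i(x_i)$ and $f_i(x_i')$ are equal or $\kappa_i$-adjacent by continuity of $f_i$; the other coordinates of $F(p)$ and $F(p')$ agree (both are the fixed $a_j$). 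Hence $F(p)$ and $F(p')$ either coincide or differ in exactly one coordinate where they are $\kappa_i$-adjacent, which makes them equal or $NP_u$-adjacent.

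Now I would apply the hypothesis that $(\Pi_{j=1}^v X_j, NP_u(\kappa_1,\ldots,\kappa_v))$ has the AFPP to $F$: there exists $p=(p_1,\ldots,p_v)$ such that $F(p)$ is equal or $NP_u$-adjacent to $p$. Projecting to the $i$-th coordinate finishes the argument: in either case the $i$-th coordinates of $F(p)$ and $p$, namely $f_i(p_i)$ and $p_i$, are equal or $\kappa_i$-adjacent, so $p_i$ is an approximate fixed point of $f_i$. Since $f_i$ was arbitrary, $(X_i,\kappa_i)$ has the AFPP. Because $i$ was arbitrary as well, the conclusion holds for every $i$.

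There is no real obstacle here; the only thing to check carefully is that, under $NP_u$-adjacency, coordinatewise projections of adjacent or equal points are still adjacent or equal in each factor (which follows directly from Definition~\ref{NP_u-def}), so that the adjacency of $F(p)$ to $p$ in the product gives the desired adjacency of $f_i(p_i)$ to $p_i$ in the $i$-th factor. This is exactly the content of Theorem~\ref{projection-cont} applied to the $i$-th projection, and it is why the argument works uniformly for every $u \in \{1,\ldots,v\}$ rather than only for $u=v$.
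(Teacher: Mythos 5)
Your proof is correct and rests on the same idea as the paper's: work inside the slice $\{a_1\}\times\cdots\times X_i\times\cdots\times\{a_v\}$ determined by basepoints chosen in the other factors. The paper packages this as ``each such slice is a retract of the product and is isomorphic to $(X_i,\kappa_i)$'' and then cites Theorems~\ref{AFPP-ret} and~\ref{AFPP-iso}, whereas you inline those two lemmas by applying the product's AFPP directly to the composite $(x_1,\ldots,x_v)\mapsto(a_1,\ldots,a_{i-1},f_i(x_i),a_{i+1},\ldots,a_v)$; both arguments are valid for every $u$ with $1\le u\le v$, for exactly the reason you note about single-coordinate adjacency.
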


\begin{proof}
Let $X=(\Pi_{i=1}^v X_i, NP_u(\kappa_1, \ldots, \kappa_v))$.

Suppose $X$
has the AFPP. Let $x_i \in X_i$. Let
\[ X_1' = X_1 \times \{(x_2, \ldots, x_v)\}, \]
\[   X_i' = \{(x_1,\dots,x_{i-1})\} \times X_i \times \{(x_{i+1}, \ldots, x_v)\} \mbox{ for } 2 \leq i < v, \]
\[ X_v' = \{(x_1,\ldots,x_{v-1})\} \times X_V.
\]
Clearly, each $X_i'$ is a retract of $X$ and is
isomorphic to $X_i$. By Theorems~\ref{AFPP-iso}
and~\ref{AFPP-ret}, $X_i$ has the AFPP.
\end{proof}

\section{$NP_v$ and fundamental groups}
Several versions of the fundamental
group for digital images exist in the literature, including those
of~\cite{Stout,Kong,Boxer99,BoSt0}. In this
paper, we use the version of~\cite{Boxer99},
which was shown in~\cite{BoSt0} to be
equivalent to the version developed in the
latter paper. Other papers cited in this
section use the version of the digital
fundamental group presented in~\cite{Boxer99}.

The author of~\cite{Han05} attempted to
study the fundamental group of a Cartesian
product of digital simple closed curves. Errors
of~\cite{Han05} were corrected
in~\cite{BoxKar12}.

The notion of a covering map~\cite{Han05} is 
often useful in computing the fundamental 
group. The following is a somewhat simpler
characterization of a covering map than
that given in~\cite{Han05}.

\begin{thm}
{\rm \cite{Boxer06}}
\label{cover-def-equiv}
Let $(E, \kappa)$ and $(B, \lambda)$ be 
digital images. Let $g: E \to B$ be a
$(\kappa,\lambda)$-continuous surjection. 
Then $g$ is a {\em $(\kappa,\lambda)$-covering map} if and only if for each $b \in B$, there is an index set $M$
such that
\begin{itemize}
\item $g^{-1}(N_{\lambda}^*(b,1,B))=\bigcup_{i \in M}N_{\kappa}^*(e_i,1,E)$ where $e_i \in g^{-1}(b)$;
\item if $i,j \in M$ and $i \neq j$ then $N_{\kappa}^*(e_i,1, E) \cap N_{\kappa}^*(e_j,1,E) = \emptyset$; and
\item the restriction map
$g|_{N_{\kappa}^*(e_i,1, E)}: N_{\kappa}^*(e_i,1, E) \to N_{\lambda}^*(b,1, B)$ is a $(\kappa,\lambda)$-isomorphism for all $i \in M$. \qed
\end{itemize}
\end{thm}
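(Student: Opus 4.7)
The plan is to verify equivalence between the three bullet-point conditions and the definition of covering map given in \cite{Han05}. Han's definition requires, for each $b\in B$, a ``local sheet'' decomposition of $g^{-1}$ of a basic neighborhood of $b$; the claim here is that taking the specific neighborhood $N^*_\lambda(b,1,B)$ suffices, and the sheets are forced to be exactly the basic neighborhoods $N^*_\kappa(e_i,1,E)$ of the preimage points $e_i\in g^{-1}(b)$. So I will prove both implications by unpacking the local structure around $b$.

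For the forward direction, I would assume $g$ satisfies Han's definition, and let $b\in B$ be given. I let $M$ index the fiber $g^{-1}(b)=\{e_i\}_{i\in M}$. Then I would show inclusion $\bigcup_{i\in M} N^*_\kappa(e_i,1,E)\subset g^{-1}(N^*_\lambda(b,1,B))$ using that $g$ is continuous (so adjacent points map to equal or adjacent points). For the reverse inclusion, given $e\in g^{-1}(N^*_\lambda(b,1,B))$, I would use Han's local sheet structure at $b$ to locate $e$ in a sheet that contains some $e_i$, then exploit that the sheet is mapped isomorphically onto $N^*_\lambda(b,1,B)$: since $g(e_i)=b$ and $g(e)\in N^*_\lambda(b,1,B)$, injectivity of the restriction together with the isomorphism property forces $e\in N^*_\kappa(e_i,1,E)$. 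Disjointness of the $N^*_\kappa(e_i,1,E)$ for distinct $i$ follows by the same argument: a shared point would give two preimages of the same point under an isomorphism of a sheet. Finally, once the decomposition is established, the restriction $g|_{N^*_\kappa(e_i,1,E)}$ inherits the isomorphism property from the sheet, since it is the composition of that sheet-level isomorphism with the identity on $N^*_\lambda(b,1,B)$.

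For the reverse direction, I would assume the three bulleted conditions and verify Han's definition directly: the neighborhood $N^*_\lambda(b,1,B)$ plays the role of the evenly covered neighborhood, the $N^*_\kappa(e_i,1,E)$ play the role of the sheets, condition two supplies pairwise disjointness, and condition three supplies the local isomorphism onto the base neighborhood. Continuity and surjectivity of $g$ are given by hypothesis.

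The main obstacle will be the step where I must prove that, in Han's decomposition, each sheet is forced to coincide with $N^*_\kappa(e_i,1,E)$ rather than being some larger or smaller connected set containing $e_i$. The key leverage is that the restriction is an isomorphism onto $N^*_\lambda(b,1,B)$, whose points are exactly $b$ and its $\lambda$-neighbors: preimages of these under the isomorphism must be $e_i$ and points $\kappa$-adjacent to $e_i$, giving exactly $N^*_\kappa(e_i,1,E)$. Once this identification is made, the rest of the argument is a routine translation between the two formulations.
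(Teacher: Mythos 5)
The paper offers no proof of this theorem: it is quoted from \cite{Boxer06} and stated without argument, so there is nothing in-paper to compare your attempt against. Judged on its own terms, your outline is the right one and is consistent with what the cited source does. The direction from the three bulleted conditions to the covering-map definition is immediate (the radius-$1$ neighborhood serves as the evenly covered neighborhood and the $N^*_\kappa(e_i,1,E)$ as its sheets), and the substantive direction is showing that a radius-$\varepsilon$ sheet decomposition in the sense of \cite{Han05} forces the radius-$1$ decomposition. You correctly reduce that to the key mechanism: an isomorphism of a sheet onto $N^*_\lambda(b,\varepsilon,B)$ carries $e_i$ to $b$ and preserves adjacency in both directions, so the part of a sheet lying over $N^*_\lambda(b,1,B)$ is exactly $N^*_\kappa(e_i,1,E)$, and disjointness and the restricted isomorphisms follow.

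Two places need to be made explicit before this is a proof rather than a plan. First, you never state the definition of covering map from \cite{Han05} that you are comparing against, and the entire content of the theorem lives in the difference between the two formulations --- in particular the existential quantifier over the radius $\varepsilon$, and the fact that Han's sheets are by definition neighborhoods $N^*_\kappa(e_i,\varepsilon,E)$ of fiber points; your phrase ``locate $e$ in a sheet that contains some $e_i$'' silently relies on this. Second, when you declare $M$ to index all of $g^{-1}(b)$, you should verify that every point of the fiber appears among the $e_i$ of the given decomposition: such a point lies in some sheet, and injectivity of that sheet's map together with $e_i\mapsto b$ forces it to equal that sheet's $e_i$. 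This is a short step, but it is a step, and your sketch assumes it. With those two points filled in, the argument goes through.
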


\begin{exl}
\rm{\cite{Han05}}
Let $C \subset \Z^n$ be a
simple closed $\kappa$-curve,
as realized by a $(c_1,\kappa)$-continuous surjection $f: [0,m-1]_{\Z} \to C$ such that $f(0)$ and $f(m-1)$ are $\kappa$-adjacent. Define
$g: \Z \to C$ by
$g(z)=f(z \mod m)$. Then
$g$ is a covering map. \qed
\end{exl}

\begin{prop}
\rm{\cite{BoxKar12}}
\label{cover2}
Suppose for $i \in \{1,2\}$, $g_i: E_i \to B_i$ is a
$(\kappa_i,\lambda_i)$-covering map. Then
$g_1 \times g_2: E_1 \times E_2 \to B_1 \times B_2$ is a
$(NP_2(\kappa_1,\kappa_2),NP_2(\lambda_1,\lambda_2))$-covering map.
\qed
\end{prop}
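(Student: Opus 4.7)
My plan is to verify the three conditions of Theorem~\ref{cover-def-equiv} for the product map $g_1 \times g_2$. First I observe that $g_1 \times g_2$ is a surjection (since each $g_i$ is) and is $(NP_2(\kappa_1,\kappa_2),NP_2(\lambda_1,\lambda_2))$-continuous by Theorem~\ref{prod-cont}. The central computational step will be the identity
\[
N^{*}_{NP_2(\mu_1,\mu_2)}\bigl((z_1,z_2),1,Z_1\times Z_2\bigr) \;=\; N^{*}_{\mu_1}(z_1,1,Z_1) \times N^{*}_{\mu_2}(z_2,1,Z_2),
\]
valid for any digital images $(Z_i,\mu_i)$ and $(z_1,z_2)\in Z_1\times Z_2$, which is immediate from Definition~\ref{NP-def}: a point $(z_1',z_2')$ lies in the left-hand side precisely when $z_i'$ is equal or $\mu_i$-adjacent to $z_i$ for each $i$.

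Next, given $b=(b_1,b_2)\in B_1\times B_2$, I apply Theorem~\ref{cover-def-equiv} to each $g_i$ to obtain index sets $M_i$ and points $e_{i,j}\in g_i^{-1}(b_i)$ for $j\in M_i$, with $g_i^{-1}(N^{*}_{\lambda_i}(b_i,1,B_i))=\bigcup_{j\in M_i}N^{*}_{\kappa_i}(e_{i,j},1,E_i)$ the disjoint union of the $N^{*}_{\kappa_i}(e_{i,j},1,E_i)$. I then let $M = M_1\times M_2$ and for $(j_1,j_2)\in M$ set $e_{(j_1,j_2)}=(e_{1,j_1},e_{2,j_2})\in (g_1\times g_2)^{-1}(b)$. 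Using the neighborhood identity above and the fact that preimages commute with products,
\[
(g_1\times g_2)^{-1}(N^{*}_{NP_2(\lambda_1,\lambda_2)}(b,1,B_1\times B_2)) = g_1^{-1}(N^{*}_{\lambda_1}(b_1,1,B_1)) \times g_2^{-1}(N^{*}_{\lambda_2}(b_2,1,B_2)),
\]
which distributes over the unions and gives
\[
\bigcup_{(j_1,j_2)\in M} N^{*}_{\kappa_1}(e_{1,j_1},1,E_1)\times N^{*}_{\kappa_2}(e_{2,j_2},1,E_2) = \bigcup_{(j_1,j_2)\in M} N^{*}_{NP_2(\kappa_1,\kappa_2)}(e_{(j_1,j_2)},1,E_1\times E_2),
\]
verifying the first bullet. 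Pairwise disjointness is also immediate: if $(j_1,j_2)\neq (j_1',j_2')$, some coordinate differs, and the corresponding factor intersection is empty by the disjointness for $g_i$, forcing the product intersection to be empty.

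Finally, for the third bullet, for each $(j_1,j_2)\in M$ the restriction $g_i|_{N^{*}_{\kappa_i}(e_{i,j_i},1,E_i)}$ is a $(\kappa_i,\lambda_i)$-isomorphism onto $N^{*}_{\lambda_i}(b_i,1,B_i)$. Applying Theorem~\ref{prod-iso} to the product of these restrictions, together with the neighborhood identity applied in both $E_1\times E_2$ and $B_1\times B_2$, yields that the restriction
\[
(g_1\times g_2)\bigl|_{N^{*}_{NP_2(\kappa_1,\kappa_2)}(e_{(j_1,j_2)},1,E_1\times E_2)}
\]
is an $(NP_2(\kappa_1,\kappa_2),NP_2(\lambda_1,\lambda_2))$-isomorphism onto $N^{*}_{NP_2(\lambda_1,\lambda_2)}(b,1,B_1\times B_2)$. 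I expect the only potentially subtle step to be the neighborhood identity connecting $NP_2$-neighborhoods with products of factor neighborhoods; once this is established, each of the three covering-map conditions reduces almost mechanically to the factorwise hypotheses by invoking Theorems~\ref{prod-cont} and~\ref{prod-iso}.
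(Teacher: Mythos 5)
Your proposal is correct, but note that the paper itself offers no proof of Proposition~\ref{cover2} at all: it is quoted from~\cite{BoxKar12} and stated with a \qed, so there is nothing internal to compare against line by line. Your argument is the natural one and checks out. The surjectivity and continuity of $g_1\times g_2$ follow as you say from Theorem~\ref{prod-cont}; your central identity $N^{*}_{NP_2(\mu_1,\mu_2)}((z_1,z_2),1)=N^{*}_{\mu_1}(z_1,1)\times N^{*}_{\mu_2}(z_2,1)$ is valid (a point of the product lies in the left side iff each coordinate is equal or adjacent to the corresponding $z_i$, which is exactly membership in the right side), and it is in fact the $v=2$, $n=1$ case of the paper's later Lemma~\ref{radn}, so the one step you flag as potentially subtle is already certified elsewhere in the paper in greater generality. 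The decomposition over $M=M_1\times M_2$, the disjointness via a differing coordinate, and the isomorphism of the restricted map via Theorem~\ref{prod-iso} (second bullet) all reduce cleanly to the factorwise hypotheses as you describe; the only point worth making explicit is that the restriction of $g_1\times g_2$ to $N^{*}_{\kappa_1}(e_{1,j_1},1,E_1)\times N^{*}_{\kappa_2}(e_{2,j_2},1,E_2)$ literally equals the product of the restricted maps $g_i|_{N^{*}_{\kappa_i}(e_{i,j_i},1,E_i)}$, so Theorem~\ref{prod-iso} applies to it directly. In short, you have supplied a complete, correct proof of a statement the paper leaves to an external reference.
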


\begin{cor}
\label{prod-cover}
Suppose for $i \in \{1,\ldots,v\}$, $g_i: E_i \to B_i$ is a
$(\kappa_i,\lambda_i)$-covering map. Then
$\Pi_{i=1}^v g_i: \Pi_{i=1}^v E_i \to \Pi_{i=1}^v B_i$ is a
$(NP_v(\kappa_1,\ldots,\kappa_v), NP_v(\lambda_1,\ldots,\lambda_v))$-covering map.
\qed
\end{cor}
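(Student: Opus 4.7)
The plan is to prove Corollary~\ref{prod-cover} by induction on $v$, using Proposition~\ref{cover2} as both the base case and the engine of the inductive step, and using Proposition~\ref{inductive-equiv} to convert between $v$-fold normal products and iterated $2$-fold normal products.

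For the base case $v=2$, the assertion is exactly Proposition~\ref{cover2}. For the inductive step, assume the statement holds for some $v-1 \geq 2$. Given covering maps $g_i \colon E_i \to B_i$ for $1 \leq i \leq v$, the inductive hypothesis yields that
\[ G := \Pi_{i=1}^{v-1} g_i \colon \Pi_{i=1}^{v-1} E_i \to \Pi_{i=1}^{v-1} B_i \]
is a $(NP_{v-1}(\kappa_1,\ldots,\kappa_{v-1}),\, NP_{v-1}(\lambda_1,\ldots,\lambda_{v-1}))$-covering map. Apply Proposition~\ref{cover2} to the pair $(G, g_v)$ to conclude that $G \times g_v$ is a covering map with respect to the adjacencies $NP_2(NP_{v-1}(\kappa_1,\ldots,\kappa_{v-1}), \kappa_v)$ on the total space and $NP_2(NP_{v-1}(\lambda_1,\ldots,\lambda_{v-1}), \lambda_v)$ on the base space.

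Finally, by Proposition~\ref{inductive-equiv} we have the adjacency identifications
\[ NP_2(NP_{v-1}(\kappa_1,\ldots,\kappa_{v-1}), \kappa_v) = NP_v(\kappa_1,\ldots,\kappa_v), \]
\[ NP_2(NP_{v-1}(\lambda_1,\ldots,\lambda_{v-1}), \lambda_v) = NP_v(\lambda_1,\ldots,\lambda_v), \]
and the natural identification of $(\Pi_{i=1}^{v-1} E_i) \times E_v$ with $\Pi_{i=1}^v E_i$ (and likewise for the $B_i$) makes $G \times g_v$ agree with $\Pi_{i=1}^v g_i$. Combining these identifications with the previous step yields the desired conclusion.

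The only genuine obstacle is notational: one must be careful that Proposition~\ref{cover2} is stated for a product of two covering maps and that the $(v-1)$-fold product $G$ really is a covering map between the images $\Pi_{i=1}^{v-1} E_i$ and $\Pi_{i=1}^{v-1} B_i$ (carrying the relevant $NP_{v-1}$ adjacencies) before one is allowed to invoke it with $g_v$. Once this is in place, the calculation is entirely formal, and Proposition~\ref{inductive-equiv} provides the bridge between the iterated binary construction and the $v$-fold construction appearing in the statement.
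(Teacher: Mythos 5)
Your proposal is correct and matches the paper's own argument, which likewise derives the corollary by combining Proposition~\ref{cover2} with the recursion of Proposition~\ref{inductive-equiv} (the paper additionally cites Theorem~\ref{cover-def-equiv} for the verification of the covering conditions, but the induction you describe is exactly the intended route).
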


\begin{proof} This follows from Propositions~\ref{cover2}
and~\ref{inductive-equiv}, and Theorem~\ref{cover-def-equiv}.
\end{proof}

For the following theorem, it is useful to know 
that a digital simple closed curve $S$ is not contractible if and only if $|S|>4$ 
~\cite{Boxer99,Boxer10}.

\begin{thm}
{\rm \cite{KRR92,Boxer99,Han05}}
Let $S \subset (\Z^n,\kappa)$ be a digital simple closed 
$\kappa$-curve that is not contractible.
Let $s_0 \in S$. Then the fundamental
group of $S$ is $\Pi_1^{\kappa}(S,s_0) \approx \Z$.
\qed
\end{thm}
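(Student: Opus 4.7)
The plan is to use the covering map $g:\Z\to S$ from the preceding Example, combined with the standard covering-space lifting machinery for digital fundamental groups developed in \cite{Boxer99,Han05}. First I would verify that $g$ actually is a $(c_1,\kappa)$-covering map in the sense of Theorem~\ref{cover-def-equiv}; this is precisely where the hypothesis $|S|>4$ (equivalently, $S$ not contractible) enters. When $|S|>4$, for each $z\in\Z$ the three points $g(z-1),g(z),g(z+1)$ are the only neighbors in $N_\kappa^*(g(z),1,S)$ and carry the same adjacency pattern as $\{z-1,z,z+1\}\subset\Z$, so the restriction is a digital isomorphism; for $|S|=4$ the lift fails because $s_{z-1}$ and $s_{z+1}$ become $\kappa$-adjacent in $S$ while their preimages are not $c_1$-adjacent in $\Z$.

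Next I would observe that $(\Z,c_1)$ is contractible, so $\Pi_1^{c_1}(\Z,\tilde s_0)$ is trivial for any chosen basepoint $\tilde s_0 \in g^{-1}(s_0)$. Using digital path-lifting, I define $\Phi:\Pi_1^\kappa(S,s_0)\to g^{-1}(s_0)$ by sending the class of a loop $\alpha$ to $\widetilde\alpha(\mathrm{end})$, where $\widetilde\alpha$ is the unique lift of $\alpha$ with $\widetilde\alpha(0)=\tilde s_0$; digital homotopy-lifting guarantees this is independent of the choice of representative. Since $g^{-1}(s_0)=\tilde s_0+m\Z$ where $m=|S|$, we can identify $g^{-1}(s_0)\cong\Z$ and regard $\Phi$ as a map into $\Z$.

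Finally I would verify that $\Phi$ is a group isomorphism. It is a homomorphism because the lift of a concatenation $\alpha*\beta$ is obtained by concatenating the lift of $\alpha$ starting at $\tilde s_0$ with the lift of $\beta$ starting at $\widetilde\alpha(\mathrm{end})$, and endpoint displacements add. It is surjective because, for each $k\in\Z$, any $c_1$-path in $\Z$ from $\tilde s_0$ to $\tilde s_0+km$ projects under $g$ to a loop at $s_0$ whose $\Phi$-value is $k$. It is injective because a loop $\alpha$ with $\Phi([\alpha])=0$ lifts to a loop $\widetilde\alpha$ in $\Z$; since $\Z$ is contractible, $\widetilde\alpha$ is nullhomotopic by a pointed homotopy, and projecting that homotopy down via $g$ gives a pointed homotopy of $\alpha$ to the constant loop.

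The main obstacle is the digital path- and homotopy-lifting properties, which are not immediate from the definition of a covering map and require the careful arguments of \cite{Boxer99,Han05}; once those are in hand, the remainder of the argument is formal, and the non-contractibility hypothesis functions precisely to make $g$ a genuine covering map.
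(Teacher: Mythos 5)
First, a point of comparison: the paper does not prove this theorem at all --- it is stated with a citation to \cite{KRR92,Boxer99,Han05} and a \qed --- so there is no internal proof to measure your argument against. The lifting/winding-number strategy you outline is indeed the one used in the cited sources, so your overall plan is the right one.

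There is, however, a genuine gap in where you spend the hypothesis that $S$ is not contractible. Your claim that noncontractibility is what makes $g:\Z\to S$, $g(z)=f(z\bmod m)$, a covering map is incorrect: the Example preceding Proposition~\ref{cover2} asserts (following \cite{Han05}) that $g$ is a covering map for \emph{every} digital simple closed curve, and your proposed failure mechanism for $|S|=4$ --- that $g(z-1)$ and $g(z+1)$ become $\kappa$-adjacent --- cannot occur, because in a simple closed curve each point has exactly two neighbors, so the two neighbors of a point are never adjacent to one another once $|S|\geq 4$. One checks directly that $g$ satisfies all three conditions of Theorem~\ref{cover-def-equiv} when $|S|=4$. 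Consequently your argument, as written, applies verbatim to the $4$-point simple closed curve and would ``prove'' that its fundamental group is $\Z$; but that curve is contractible and has trivial fundamental group. The step that actually breaks there, and the step you dismiss with ``digital homotopy-lifting guarantees this,'' is the well-definedness of $\Phi$ on homotopy classes. In the digital setting the homotopy lifting property is \emph{not} a formal consequence of the covering-map definition; the results in the literature require the covering to be a radius~$2$ local isomorphism (exactly the property the paper introduces after Theorem~\ref{fund-vprod}), and for the canonical covering of a simple closed curve by $\Z$ that condition imposes a lower bound on $|S|$ (it fails for $|S|=4$, and even the case $|S|=5$ needs separate care in the cited proofs). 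So the noncontractibility hypothesis must be used to justify lifting homotopies, not to verify that $g$ is a covering map. Until that is done, $\Phi$ is defined only on loops rather than on $\Pi_1^{\kappa}(S,s_0)$; your injectivity and surjectivity arguments are fine once well-definedness is secured, since they only push homotopies \emph{down} through the continuous map $g$.
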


The following theorem was discussed in~\cite{Han05},
but the argument given for it in~\cite{Han05}
had errors. A correct proof was given
in~\cite{BoxKar12}.

\begin{thm}
\rm{\cite{BoxKar12}}
\label{fund-2prod}
Let $S_i \subset (\Z^{n_i},c_{n_i})$,
for $i \in \{1,2\}$, be a noncontractible digital
simple closed curve. Let $s_i \in S_i$. Then
the fundamental group
\[ \Pi_1^{c_{n_1+n_2}} (S_1 \times S_2, (s_1,s_2))
   \approx \Z^2. \qed
\]
\qed
\end{thm}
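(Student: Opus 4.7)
The plan is to reduce the computation to a product covering whose total space is simply connected. By the example following Theorem~\ref{cover-def-equiv}, each noncontractible $S_i$ admits a $(c_1, c_{n_i})$-covering map $g_i: \Z \to S_i$, obtained by composing a cyclic parametrization of $S_i$ with reduction modulo $|S_i|$. The hypothesis of noncontractibility (equivalently $|S_i| > 4$) guarantees that $g_i$ is a nontrivial cover rather than a degenerate constant map.

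Next, I would form the product $g_1 \times g_2 : \Z \times \Z \to S_1 \times S_2$. Corollary~\ref{prod-cover} then gives that this product is an $(NP_2(c_1, c_1),\, NP_2(c_{n_1}, c_{n_2}))$-covering map. Applying Theorem~\ref{2prod} twice, $NP_2(c_1, c_1) = c_2$ and $NP_2(c_{n_1}, c_{n_2}) = c_{n_1+n_2}$, so
\[ g_1 \times g_2 : (\Z^2, c_2) \to (S_1 \times S_2,\, c_{n_1+n_2}) \]
is a $(c_2, c_{n_1+n_2})$-covering map whose total space is connected.

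The conclusion will come from the digital covering--space correspondence. The space $(\Z^2, c_2)$ is simply connected by an explicit digital contraction (retracting lattice points inward to the origin via unit $c_2$-moves). Using the standard lifting theorem for digital coverings from~\cite{Boxer06,Han05}, loop classes in $(S_1\times S_2, (s_1,s_2))$ correspond bijectively to endpoints in the fibre $(g_1\times g_2)^{-1}(s_1,s_2) = g_1^{-1}(s_1) \times g_2^{-1}(s_2) \cong \Z\times\Z$. The product structure of the cover ensures this bijection carries concatenation of loops to coordinate-wise addition of endpoint displacements, so the induced group on the fibre is exactly $\Z^2$, giving $\Pi_1^{c_{n_1+n_2}}(S_1 \times S_2, (s_1,s_2)) \approx \Z^2$.

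The main obstacle will be verifying that the product-cover identification respects the group operation, not just the underlying set. In the digital setting the unique path lifting of a concatenation $\alpha \cdot \beta$ must be shown to have endpoint equal to the sum (in $\Z^2$) of the endpoints of the lifts of $\alpha$ and $\beta$; this requires that the monodromy actions on the fibre commute with the product decomposition of $\Z \times \Z$, which in turn follows from the factor-wise nature of $g_1 \times g_2$ together with the deck-translation structure of each $g_i : \Z \to S_i$.
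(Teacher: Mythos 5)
The paper itself does not prove Theorem~\ref{fund-2prod}: it is imported from \cite{BoxKar12} without proof, so there is no internal argument to compare yours against. Your covering-space strategy is at least consistent with the machinery the paper assembles --- Proposition~\ref{cover2} plus Theorem~\ref{2prod} really do give a $(c_2,c_{n_1+n_2})$-covering map $g_1\times g_2:\Z\times\Z\to S_1\times S_2$ --- but as written the argument has two genuine problems.

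First, your justification that $(\Z^2,c_2)$ is simply connected is wrong. A digital contraction is a homotopy $H:X\times[0,m]_{\Z}\to X$ over a \emph{finite} time interval in which each induced track is a path, so no such $H$ can deform $1_{\Z^2}$ to a constant map: $\Z^2$ has infinite diameter, and the paper itself stresses that an image of infinite diameter cannot have the homotopy type of a point. Simple connectivity of $(\Z^2,c_2)$ is true, but it must be argued loop by loop (a based loop $[0,m]_{\Z}\to\Z^2$ has bounded image and can be deformed to a constant inside a finite box), not by contracting the whole space. Second, the lifting and monodromy theorems you invoke from \cite{Boxer06,Han05} are not valid for arbitrary digital covering maps; they require the radius~$2$ local isomorphism property. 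That property holds for $g_i:\Z\to S_i$ precisely because $|S_i|>4$, i.e., noncontractibility enters in an essential, quantitative way that you only gesture at, and it must then be transferred to the product $g_1\times g_2$ (this is exactly what Lemma~\ref{radn} and the theorem following it are designed to do). Without this, the asserted bijection between loop classes and the fibre $\Z\times\Z$, and a fortiori the claim that concatenation corresponds to coordinatewise addition, is unsupported; the clean way to finish is via the deck transformation group of the simply connected cover, which consists of the translations by $m_1\Z\times m_2\Z\cong\Z^2$. Both gaps are repairable, but each is a point where the proof as written would fail.
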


The significance of the adjacency $c_{n_1+n_2}$ in
the proof of Theorem~\ref{fund-2prod} is that,
per Theorem~\ref{2prod},
$NP(c_{n_1},c_{n_2}) = c_{n_1+n_2}$. Thus, trivial
modifications of the proof given
in~\cite{BoxKar12} for Theorem~\ref{fund-2prod}
yield the following generalization.

\begin{thm}
\label{fund-vprod}
For $i \in \{1,\ldots,v\}$, let 
$S_i \subset (\Z^{n_i},\kappa_i)$
be a noncontractible digital
simple closed curve. Let $s_i \in S_i$. Then
the fundamental group
\[ \Pi_1^{NP_v(\kappa_1,\ldots,\kappa_v)} (\Pi_{i=1}^v S_i , (s_1,\ldots,s_v))
   \approx \Z^v. \qed
\]
\end{thm}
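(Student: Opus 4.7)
My plan is to mimic the proof of Theorem~\ref{fund-2prod} in~\cite{BoxKar12}, replacing $2$-fold products by $v$-fold ones. For each $i$, let $f_i: [0,m_i-1]_{\Z} \to S_i$ be a parametrization witnessing $S_i$ as a noncontractible simple closed $\kappa_i$-curve (with $f_i(0)$ and $f_i(m_i-1)$ being $\kappa_i$-adjacent), and define $g_i: \Z \to S_i$ by $g_i(z) = f_i(z \bmod m_i)$. As in the example preceding Proposition~\ref{cover2}, each $g_i$ is a $(c_1,\kappa_i)$-covering map. Then the product $g = \Pi_{i=1}^v g_i: \Z^v \to \Pi_{i=1}^v S_i$ is an $(NP_v(c_1,\ldots,c_1),\, NP_v(\kappa_1,\ldots,\kappa_v))$-covering map by Corollary~\ref{prod-cover}.

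Next I would observe that the covering space $(\Z^v, NP_v(c_1,\ldots,c_1))$ is simply connected; any loop contracts coordinate by coordinate in this digital Euclidean-style space, exactly as in the $v=2$ case. Applying the digital path-lifting and homotopy-lifting machinery (as in~\cite{Boxer99,Boxer06,Han05}), each loop $\gamma$ in $\Pi_{i=1}^v S_i$ based at $(s_1,\ldots,s_v)$ lifts uniquely to a path $\widetilde{\gamma}$ in $\Z^v$ starting at a fixed preimage $(z_1,\ldots,z_v)$, and the endpoint depends only on the homotopy class of $\gamma$. Sending $[\gamma]$ to the translation taking $(z_1,\ldots,z_v)$ to the endpoint defines a homomorphism $\Phi: \Pi_1^{NP_v(\kappa_1,\ldots,\kappa_v)}(\Pi_{i=1}^v S_i,\, (s_1,\ldots,s_v)) \to \Z^v$, where $\Z^v$ is identified with the deck transformation group $m_1 \Z \times \cdots \times m_v \Z$ in the obvious way.

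Finally I would check that $\Phi$ is bijective. For surjectivity, any $(a_1,\ldots,a_v) \in \Z^v$ is realized by projecting a path in $\Z^v$ from $(z_1,\ldots,z_v)$ to $(z_1 + a_1 m_1, \ldots, z_v + a_v m_v)$; this projects to a loop because $g$ identifies these two points. For injectivity, if $\Phi([\gamma])=0$ then $\widetilde{\gamma}$ is a loop in the simply connected $\Z^v$ and hence nullhomotopic, and the projection of such a nullhomotopy gives one for $\gamma$. The only real obstacle is bookkeeping: confirming that each ingredient of the $v=2$ covering-space argument in~\cite{BoxKar12} (existence and uniqueness of lifts, continuity of the product cover, the endpoint being a homotopy invariant, and the group structure of the fiber) transfers verbatim to $NP_v$. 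This is precisely what Proposition~\ref{inductive-equiv} together with Corollary~\ref{prod-cover} delivers, so, as the statement anticipates, the modifications are purely notational.
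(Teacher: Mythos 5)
Your proposal follows exactly the route the paper intends: the paper gives no independent argument for this theorem, merely observing that since $NP(c_{n_1},c_{n_2})=c_{n_1+n_2}$, the covering-space proof of the $v=2$ case in \cite{BoxKar12} (product of the standard coverings $\Z\to S_i$, simple connectivity of the total space, lifting, and deck transformations) generalizes with trivial modifications. Your sketch spells out those modifications and is consistent with the paper's justification.
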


Many results concerning digital covering maps depend 
on the radius 2 local isomorphism property (e.g., 
\cite{Han05',Boxer06,BoxKar08,BoxKar10,BoxKar12,Boxer10,BoxKar12b}).
We have the following.

\begin{definition}
\rm{\cite{Han05'}} Let $n \in \N$.
A $(\kappa,\lambda)$-covering
$(E,p,B)$ is a {\em radius~$n$ local
isomorphism} if, for all $i \in M$,
the restriction map
$p|_{N_{\kappa}^*(e_i,n)}: N_{\kappa}^*(e_i,n) \to N_{\lambda}^*(b_i,n)$ is an
isomorphism, where $e_i,b_i,M$ are
as in Theorem~\ref{cover-def-equiv}.
\end{definition}

\begin{lem}
\label{radn}
Let $x_i \in (X_i,\kappa_i)$. Then
\[ N_{NP_v(\kappa_1,\ldots,\kappa_v)}^*((x_1,\ldots,x_n),n) = \Pi_{i=1}^v N_{\kappa_i}^*(x_i,n). \]
\end{lem}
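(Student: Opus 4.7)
The plan is to prove both inclusions of the claimed set equality. Here $N_{\kappa}^*(x,n)$ should be interpreted as the closed $n$-ball around $x$, i.e., the set of points reachable from $x$ by a $\kappa$-path of length at most $n$ (in the sense of the path definition given in Section~\ref{prelims}, which allows consecutive equal points).

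For the inclusion $\subseteq$, I would argue as follows. Suppose $y = (y_1,\ldots,y_v)$ belongs to $N_{NP_v(\kappa_1,\ldots,\kappa_v)}^*((x_1,\ldots,x_v),n)$. Then there is a $NP_v$-path $\{q_k\}_{k=0}^{m}$ in $\Pi_{i=1}^v X_i$ from $(x_1,\ldots,x_v)$ to $y$ with $m \leq n$. Apply the projection $p_i:\Pi_{j=1}^v X_j \to X_i$, which is continuous by Theorem~\ref{projection-cont}. The sequence $\{p_i(q_k)\}_{k=0}^{m}$ is a $\kappa_i$-path (consecutive entries are equal or $\kappa_i$-adjacent) of length at most $n$ from $x_i$ to $y_i$, so $y_i \in N_{\kappa_i}^*(x_i,n)$ for each $i$.

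For the reverse inclusion $\supseteq$, suppose $y_i \in N_{\kappa_i}^*(x_i,n)$ for each $i$. Choose a $\kappa_i$-path $x_i = z_{i,0}, z_{i,1}, \ldots, z_{i,m_i} = y_i$ of length $m_i \leq n$, and pad it by defining $z_{i,k} = y_i$ for $m_i \leq k \leq n$. This extension is still a $\kappa_i$-path (consecutive entries are equal once we are past $m_i$). Now define
\[
q_k = (z_{1,k}, z_{2,k}, \ldots, z_{v,k}), \qquad 0 \leq k \leq n.
\]
Then $q_0 = (x_1,\ldots,x_v)$ and $q_n = (y_1,\ldots,y_v)$. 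For consecutive $q_{k-1}$ and $q_k$, each coordinate $z_{i,k-1}$ is either equal to $z_{i,k}$ or $\kappa_i$-adjacent to it. If $q_{k-1} = q_k$ this is fine for a path; otherwise the number of coordinates that actually change is between $1$ and $v$, and each such change is a $\kappa_i$-adjacency, so $q_{k-1}$ and $q_k$ are $NP_v(\kappa_1,\ldots,\kappa_v)$-adjacent by Definition~\ref{NP_u-def}. Thus $\{q_k\}_{k=0}^n$ is a path of length $n$ in the product witnessing $(y_1,\ldots,y_v) \in N_{NP_v(\kappa_1,\ldots,\kappa_v)}^*((x_1,\ldots,x_v),n)$.

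The only mild obstacle is bookkeeping: making sure the padded paths have a common length and verifying the adjacency condition when different numbers of coordinates move in one step. Since $NP_v$ permits between $1$ and $v$ simultaneous coordinate changes, the product path construction goes through cleanly, and no step needs anything beyond Definition~\ref{NP_u-def} and Theorem~\ref{projection-cont}.
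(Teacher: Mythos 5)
Your proposal is correct and follows essentially the same strategy as the paper's proof: the forward inclusion by projecting the product path coordinatewise (the paper does this directly, you invoke Theorem~\ref{projection-cont}, which amounts to the same observation), and the reverse inclusion by padding the factor paths to a common length $n$ and assembling the product path, checking adjacency via Definition~\ref{NP_u-def}. No substantive differences.
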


\begin{proof} Let $x=(x_1,\ldots, x_v)$.
Let $y \in N_{NP_v(\kappa_1,\ldots,\kappa_v)}^*(x,n)$. For some
$m \leq n$, there is a path
$\{y_i\}_{i=0}^m$ from $x$ to $y$.
Let $y_i=(y_{i,1}, \ldots, y_{i,v})$
where $y_{i,j} \in X_i$.
Since $y_i$ and $y_{i+1}$ are
$NP_v(\kappa_1,\ldots,\kappa_v)$-adjacent, $y_{i,j}$ and
$y_{i,j+1}$ are equal or $\kappa_i$-adjacent. Therefore,
$\{y_{i,j}\}_{j=1}^m$ is a 
$\kappa_i$ path in $X_i$ from
$y_{i,0}$ to $y_{i,m}$. Hence
$N_{NP_v(\kappa_1,\ldots,\kappa_v)}^*((x_1,\ldots,x_n),n) \subset \Pi_{i=1}^v N_{\kappa_1}^*(x_i,n)$.

Let $y=(y_1,\ldots,y_v) \in \Pi_{i=1}^v N_{\kappa_1}^*(x_i,n)$.
For each $i$ and for some $m_i \leq n$, there is
a $\kappa_i$-path $P_i=\{y_{i,j}\}_{j=1}^{m_i}$ from $x_i$ to $y_i$. There is no loss of generality in assuming $m_i=n$,
since we can take $P_i=\{y_{i,j}\}_{j=1}^n$ where $y_{i,j}=y_{i,m_i}$ for $m_i \leq j \leq n$. Then for each $i<n$,
$y_i'=(y_{i,1},\ldots,y_{i,v})$ 
and $y_{i+1}'=(y_{i+1,1},\ldots,y_{i+1,v})$ are
equal or $NP_v(\kappa_1,\ldots,\kappa_v)$-adjacent. Then
$\{y_i'\}_{i=1}^n$ is an
$NP_v(\kappa_1,\ldots,\kappa_v)$-path from $x$ to $y$. Thus, $\Pi_{i=1}^v N_{\kappa_1}(x_i,n) \subset N_{NP_v(\kappa_1,\ldots,\kappa_v)}(x,n)$.
The assertion follows.
\end{proof}

\begin{thm} For $1 \leq i \leq v$, let
$p_i: (E_i,\kappa_i) \to (B_i,\lambda_i)$
be continuous and let $n \in \N$.  If
$(E_i,p_i,B_i)$ is a covering and a
radius~$n$ local isomorphism for all~$i$,
then the product function
\[ \Pi_{i=1}^v p_i: \Pi_{i=1}^v E_i \to 
   \Pi_{i=1}^v B_i
\]
is a $(NP_v(\kappa_1,\ldots,\kappa_v),NP_v(\lambda_1,\ldots,\lambda_v)$ covering map that is a
radius~$n$ local isomorphism.
\end{thm}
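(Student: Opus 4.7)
The plan is to leverage the infrastructure already assembled in the paper: Corollary~\ref{prod-cover} disposes of the ``covering map'' half of the conclusion, and Lemma~\ref{radn} together with Theorem~\ref{prod-iso} will dispose of the ``radius~$n$ local isomorphism'' half. So writing $P = \Pi_{i=1}^v p_i$, Corollary~\ref{prod-cover} immediately gives that $P$ is a $(NP_v(\kappa_1,\ldots,\kappa_v), NP_v(\lambda_1,\ldots,\lambda_v))$-covering map, and the remaining task is to verify the radius~$n$ condition on the local restrictions of $P$.

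Fix $b = (b_1,\ldots,b_v) \in \Pi_{i=1}^v B_i$. For each $i$, the covering/radius~$n$ hypotheses provide an index set $M_i$ and sheets $\{e_{i,\alpha_i}\}_{\alpha_i \in M_i} \subset p_i^{-1}(b_i)$ satisfying the conditions of Theorem~\ref{cover-def-equiv} together with the radius~$n$ isomorphism property on each $N_{\kappa_i}^*(e_{i,\alpha_i}, n)$. From the proof of Corollary~\ref{prod-cover}, the sheets of $P$ over $b$ are naturally indexed by the product $M = \Pi_{i=1}^v M_i$, with representatives $e_\alpha = (e_{1,\alpha_1},\ldots,e_{v,\alpha_v})$ for $\alpha \in M$. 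Next I would apply Lemma~\ref{radn} to obtain the identifications
\[ N_{NP_v(\kappa_1,\ldots,\kappa_v)}^*(e_\alpha, n) = \Pi_{i=1}^v N_{\kappa_i}^*(e_{i,\alpha_i}, n), \qquad N_{NP_v(\lambda_1,\ldots,\lambda_v)}^*(b, n) = \Pi_{i=1}^v N_{\lambda_i}^*(b_i, n). \]

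Under these identifications, the restriction $P|_{N_{NP_v(\kappa_1,\ldots,\kappa_v)}^*(e_\alpha, n)}$ is literally the product of the restrictions $p_i|_{N_{\kappa_i}^*(e_{i,\alpha_i}, n)}$. Each such factor restriction is a $(\kappa_i,\lambda_i)$-isomorphism by the radius~$n$ hypothesis, so Theorem~\ref{prod-iso} delivers that the product is a $(NP_v(\kappa_1,\ldots,\kappa_v), NP_v(\lambda_1,\ldots,\lambda_v))$-isomorphism onto $N_{NP_v(\lambda_1,\ldots,\lambda_v)}^*(b, n)$. This is exactly the radius~$n$ local isomorphism condition for $P$.

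The argument is essentially a three-line assembly of previously proved facts, so I do not expect any substantive obstacle. The only delicate point is bookkeeping: one must verify that the sheet indexing for the product covering is really the product of the factor indexings, and that the restriction of a product map to a product of neighborhoods coincides with the product of the restricted maps. Both are routine once the Lemma~\ref{radn} identifications are in place.
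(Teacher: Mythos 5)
Your proposal is correct and follows essentially the same route as the paper, which simply cites Corollary~\ref{prod-cover} for the covering-map half and Lemma~\ref{radn} for the radius~$n$ half. You merely spell out the sheet bookkeeping and the appeal to Theorem~\ref{prod-iso} that the paper leaves implicit.
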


\begin{proof} This follows from
Corollary~\ref{prod-cover} and
Lemma~\ref{radn}.
\end{proof}

\section{$NP_v$ and multivalued functions}
We study properties of multivalued functions
that are preserved by $NP_v$.

\subsection{Weak and strong continuity}
\begin{thm}
\label{weak-prod}
Let $F_i: (X_i, \kappa_i) \multimap (Y_i,\lambda_i)$ be multivalued functions for $1 \le i \le v$.
Let $X = \Pi_{i=1}^v X_i$, $Y = \Pi_{i=1}^v Y_i$, and $F = \Pi_{i=1}^v F_i: (X, NP_v(\kappa_1,\ldots,\kappa_v)) \multimap (Y, NP_v(\lambda_1,\ldots,\lambda_v))$. Then
$F$ has weak continuity if and only if each 
$F_i$ has weak continuity.
\end{thm}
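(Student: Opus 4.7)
The plan is to verify both directions by unpacking the definitions of weak continuity and of $NP_v$-adjacency, exploiting the coordinatewise structure of the product. Recall that weak continuity of a multivalued function means adjacent points map to adjacent image sets, and two subsets are adjacent iff they contain a pair of points that are equal or adjacent. The product $F=\Pi_{i=1}^v F_i$ is defined pointwise by $F(x_1,\ldots,x_v)=\Pi_{i=1}^v F_i(x_i)$.

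For the forward direction, assume $F$ has weak continuity and fix an index $i$ together with $\kappa_i$-adjacent points $x_i,x_i'\in X_i$. Choose any $a_j\in X_j$ for $j\neq i$ and form $p,p'\in X$ by embedding $x_i,x_i'$ into the $i$-th slot of $(a_1,\ldots,a_v)$. Then $p$ and $p'$ differ only in the $i$-th coordinate, where they are $\kappa_i$-adjacent, so they are $NP_v(\kappa_1,\ldots,\kappa_v)$-adjacent. Weak continuity of $F$ yields $y\in F(p)$ and $y'\in F(p')$ that are equal or $NP_v(\lambda_1,\ldots,\lambda_v)$-adjacent in $Y$; in particular $y_i\in F_i(x_i)$ and $y_i'\in F_i(x_i')$. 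Inspecting the $NP_v$-adjacency definition at the $i$-th coordinate, either $y_i=y_i'$ (when $i$ is not among the indices where $y$ and $y'$ differ) or $y_i$ and $y_i'$ are $\lambda_i$-adjacent; either way $F_i(x_i)$ and $F_i(x_i')$ are adjacent, so $F_i$ has weak continuity.

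For the reverse direction, suppose each $F_i$ has weak continuity and let $p=(x_1,\ldots,x_v)$, $p'=(x_1',\ldots,x_v')$ be $NP_v(\kappa_1,\ldots,\kappa_v)$-adjacent. Let $I=\{i : x_i\neq x_i'\}$; by definition, $I$ is nonempty, $x_i$ and $x_i'$ are $\kappa_i$-adjacent for $i\in I$, and $x_i=x_i'$ for $i\notin I$. For each $i\in I$, weak continuity of $F_i$ furnishes $y_i\in F_i(x_i)$ and $y_i'\in F_i(x_i')$ that are equal or $\lambda_i$-adjacent; for $i\notin I$, pick any $y_i\in F_i(x_i)=F_i(x_i')$ and set $y_i'=y_i$. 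Assembling, $y=(y_1,\ldots,y_v)\in F(p)$ and $y'=(y_1',\ldots,y_v')\in F(p')$ agree or are $\lambda_i$-adjacent coordinatewise, so letting $J=\{i : y_i\neq y_i'\}$ we have either $J=\emptyset$ (so $y=y'$) or $J\neq\emptyset$ (so $y$ and $y'$ are $NP_v(\lambda_1,\ldots,\lambda_v)$-adjacent). In either case $F(p)$ and $F(p')$ are adjacent subsets of $Y$, confirming weak continuity of $F$.

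The only mild subtlety lies in the forward direction: the existential witness produced by weak continuity of $F$ does not a priori specify which coordinates carry the adjacency, so one must observe that in every configuration admitted by Definition~\ref{NP_u-def} the $i$-th coordinate is either equal on both sides or $\lambda_i$-adjacent. Apart from this case split, the argument is a routine bookkeeping of coordinates, making crucial use of the coordinate-slice embeddings $x_i\mapsto (a_1,\ldots,a_{i-1},x_i,a_{i+1},\ldots,a_v)$ in the forward direction and of coordinate-wise assembly of witnesses in the reverse direction.
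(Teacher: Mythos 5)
Your proof is correct and follows essentially the same route as the paper's: unpack weak continuity and $NP_v$-adjacency coordinatewise, using slice embeddings for the forward direction and coordinatewise assembly of witnesses for the reverse. If anything, your version is more careful than the paper's terse chain of biconditionals, which only explicitly treats product points that are adjacent in \emph{every} coordinate; your index sets $I$ and $J$ handle the general $NP_v$-adjacent pairs (where some coordinates coincide) that the paper's wording glosses over.
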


\begin{proof}
Let $x_i$ and $x_i'$ be $\kappa_i$-adjacent in $X_i$. Then
$x=(x_1,\ldots,x_v)$ and
$x'=(x_1',\ldots,x_v')$ are
$NP_v(\kappa_1, \ldots, \kappa_v)$-adjacent in $X$.

The multivalued function $F$ has weak continuity $\Leftrightarrow$ 
for $x,x'$ as above, $F(x)$ and $F(x')$ are 
$NP_v(\lambda_1, \ldots, \lambda_v)$-adjacent subsets of $Y$
$\Leftrightarrow$
for each~$i$ and for all $x_i,x_i'$ as above, $F_i(x_i)$ and $F_i(x_i')$
are $\lambda_i$-adjacent subsets of $Y_i$
$\Leftrightarrow$ for each~$i$, $F_i$ has weak continuity.
\end{proof}

\begin{thm}
\label{strong-prod}
Let $F_i: (X_i, \kappa_i) \multimap (Y_i,\lambda_i)$ be multivalued functions for $1 \le i \le v$.
Let $X = \Pi_{i=1}^v X_i$, $Y = \Pi_{i=1}^v Y_i$, and $F = \Pi_{i=1}^v F_i: (X, NP_v(\kappa_1,\ldots,\kappa_v)) \multimap (Y, NP_v(\lambda_1,\ldots,\lambda_v))$. Then
$F$ has strong continuity if and only if each 
$F_i$ has strong continuity.
\end{thm}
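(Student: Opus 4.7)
The plan is to mirror the proof of Theorem \ref{weak-prod}, but to track adjacency point-by-point rather than set-by-set, since strong continuity is a stronger, pointwise condition.

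For the forward implication, I would fix $NP_v(\kappa_1,\ldots,\kappa_v)$-adjacent points $x=(x_1,\ldots,x_v)$ and $x'=(x_1',\ldots,x_v')$ in $X$. By Definition \ref{NP_u-def} there is a nonempty subset $S\subseteq\{1,\ldots,v\}$ such that $x_i$ is $\kappa_i$-adjacent to $x_i'$ for $i\in S$, and $x_i=x_i'$ for $i\notin S$. Given any $y=(y_1,\ldots,y_v)\in F(x)$, for each $i\in S$ I invoke the strong continuity of $F_i$ to pick $y_i'\in F_i(x_i')$ with $y_i'$ equal or $\lambda_i$-adjacent to $y_i$; for $i\notin S$, since $F_i(x_i)=F_i(x_i')$, I simply set $y_i'=y_i$. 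Then $y'=(y_1',\ldots,y_v')\in F(x')$, and $y$ and $y'$ differ (if at all) only at a subset of $S$ where they are $\lambda_i$-adjacent, so $y$ and $y'$ are equal or $NP_v(\lambda_1,\ldots,\lambda_v)$-adjacent. The symmetric argument handles the other half of the strong continuity condition.

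For the reverse implication, I fix an index $i$ and $\kappa_i$-adjacent points $x_i,x_i'\in X_i$. Choose arbitrary $a_j\in X_j$ for $j\ne i$ and form
\[ x=(a_1,\ldots,a_{i-1},x_i,a_{i+1},\ldots,a_v),\qquad
   x'=(a_1,\ldots,a_{i-1},x_i',a_{i+1},\ldots,a_v), \]
which are $NP_v(\kappa_1,\ldots,\kappa_v)$-adjacent. Given $y_i\in F_i(x_i)$, pick any $b_j\in F_j(a_j)$ for $j\ne i$, and consider $y=(b_1,\ldots,b_{i-1},y_i,b_{i+1},\ldots,b_v)\in F(x)$. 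Strong continuity of $F$ yields $y'=(c_1,\ldots,c_v)\in F(x')$ equal or $NP_v(\lambda_1,\ldots,\lambda_v)$-adjacent to $y$; in particular $c_i\in F_i(x_i')$ is equal or $\lambda_i$-adjacent to $y_i$. A symmetric choice starting from a point of $F_i(x_i')$ completes the verification that $F_i$ has strong continuity.

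The only delicate point is the reverse direction, which silently relies on each $F_j(a_j)$ being nonempty so that the auxiliary coordinates $b_j$ can actually be selected. This is the standard convention for multivalued functions in this paper (compare Definition \ref{mildly} and its usage throughout), so I would adopt it without further comment; if the author wished to be pedantic, one could note that allowing $F_j(a_j)=\varnothing$ would make even the statement of strong continuity vacuous on that coordinate. Beyond this, the proof is routine bookkeeping on coordinates.
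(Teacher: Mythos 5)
Your proof is correct and follows the same coordinatewise strategy as the paper's, which disposes of the theorem via a terse chain of biconditionals. Your version is in fact more careful: you treat general $NP_v(\kappa_1,\ldots,\kappa_v)$-adjacent pairs in which only a subset $S$ of coordinates change, and you make explicit the embedding argument (and the nonemptiness convention for the auxiliary coordinates) needed for the converse, both of which the paper's one-line chain of equivalences leaves implicit.
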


\begin{proof}
Let $x_i$ and $x_i'$ be $\kappa_i$-adjacent in $X_i$. Then
$x=(x_1,\ldots,x_v)$ and
$x'=(x_1',\ldots,x_v')$ are
$NP_v(\kappa_1, \ldots, \kappa_v)$-adjacent in $X$.

The multivalued function $F$ has strong continuity $\Leftrightarrow$ 
for $x,x'$ as above, every point of $F(x)$ is $NP_v(\lambda_1, \ldots, \lambda_v)$-adjacent or equal to a point of $F(x')$ and every point of $F(x')$ is $NP_v(\lambda_1, \ldots, \lambda_v)$-adjacent or equal to a point of $F(x)$
$\Leftrightarrow$
for each~$i$ and for all $x_i,x_i'$ as above, every point of $F_i(x_i)$ is 
$\lambda_i$-adjacent or equal to a point of $F_i(x_i')$ and every point of $F_i(x_i')$ is $\lambda_i$-adjacent or equal to a point of $F_i(x_i)$
$\Leftrightarrow$ for each~$i$, $F_i$ has strong continuity.
\end{proof}

\subsection{Continuous multifunctions}
\begin{figure}
\includegraphics[height=2.75
in]{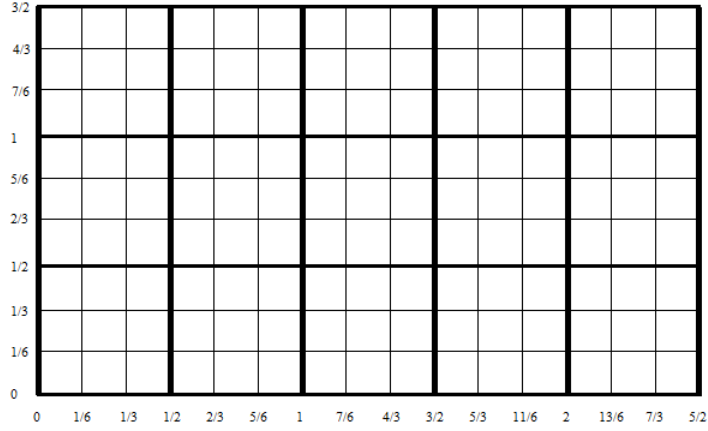}
\caption{The digital image
$X=[0,2]_{\Z}\times [0,1]_{\Z}$ with
its partitions $S(X,2)$ with member coordinates on heavy lines, and $S(X,6)$
with member coordinates on both heavy and light lines. In the notation used in the proof of
Lemma~\ref{gcm-subdiv}, we have,
e.g., $I(7/6,2/3)=(1,1/2)$.
}
\label{partitions-fig}
\end{figure}

\begin{lem}
\label{gcm-subdiv}
Let $X \subset \Z^m$, $Y \subset \Z^n$.
Let $F: (X,c_a) \multimap (Y,c_b)$ be a
continuous multivalued function. Let
$f: (S(X,r),c_a) \to (Y,c_b)$ be a continuous function that
induces $F$. Let $s \in \N$. Then there is a
continuous function
$f_s: (S(X,rs),c_a) \to (Y,c_b)$ that
induces $F$.
\end{lem}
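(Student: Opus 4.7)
The plan is to construct $f_s$ by composing $f$ with a natural ``coarsening'' map $I : S(X,rs) \to S(X,r)$ that sends a fine-grid point to the coarse-grid point of the cell containing it. Concretely, for $y = (y_1,\dots,y_m) \in S(X,rs)$, I will define
\[
I(y_1,\dots,y_m) = \left(\frac{\lfloor r y_1 \rfloor}{r}, \dots, \frac{\lfloor r y_m \rfloor}{r}\right),
\]
which matches the example in Figure~\ref{partitions-fig}: with $r=2$, $s=3$, we get $I(7/6, 2/3) = (1, 1/2)$. Then I will put $f_s = f \circ I$, and the two things to verify are that $I$ is $(c_a,c_a)$-continuous and that $f_s$ induces the same multivalued function $F$.

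First I would check that $I$ is well-defined, i.e.\ that $I(y) \in S(X,r)$ whenever $y \in S(X,rs)$. This follows from the floor identity $\lfloor \lfloor r y_i \rfloor / r \rfloor = \lfloor y_i \rfloor$, which shows $E_r(I(y)) = E_{rs}(y) \in X$. Next I would check continuity of $I$: if $y, y' \in S(X,rs)$ are $c_a$-adjacent, they differ in at most $a$ coordinates, each by $1/(rs)$. At each such coordinate, $\lfloor r y_i \rfloor$ and $\lfloor r y_i' \rfloor$ differ by $0$ or $1$, so $I(y)$ and $I(y')$ differ in at most $a$ coordinates, each by $0$ or $1/r$. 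Hence $I(y)$ and $I(y')$ are either equal or $c_a$-adjacent in $S(X,r)$. Continuity of $f_s = f \circ I$ then follows from Theorem~\ref{composition}.

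Finally, I would verify that $f_s$ induces $F$, i.e.\ that for each $x \in X$,
\[
\bigcup_{y \in E_{rs}^{-1}(x)} \{f_s(y)\} = \bigcup_{x' \in E_r^{-1}(x)} \{f(x')\} = F(x).
\]
For the $\subseteq$ direction, if $y \in E_{rs}^{-1}(x)$ then $I(y) \in E_r^{-1}(x)$ by the floor identity above, so $f_s(y) = f(I(y))$ lies in $F(x)$. For the $\supseteq$ direction, each $x' \in E_r^{-1}(x)$ can itself be viewed as a point of $S(X,rs)$ (by writing $w/r = ws/(rs)$), and then $I(x') = x'$, $E_{rs}(x') = x$, so $f(x') = f_s(x')$ appears on the left.

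I do not expect a real obstacle here; the only thing to be careful about is the floor arithmetic relating the three layers $X$, $S(X,r)$, and $S(X,rs)$, and making sure the coarsening map behaves correctly under $c_a$-adjacency (so that differences spread over at most $a$ coordinates and never exceed $1/r$ in any single coordinate).
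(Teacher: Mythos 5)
Your proposal is correct and follows essentially the same route as the paper: both define the coarsening map $I:S(X,rs)\to S(X,r)$ sending a fine-grid point to the coarse-grid point whose cell contains it, set $f_s=f\circ I$, and then check that $I$ carries $c_a$-adjacent points to equal or $c_a$-adjacent points and that the union $\bigcup_{u\in E_{rs}^{-1}(w)}f_s(u)$ equals $\bigcup_{y\in E_r^{-1}(w)}f(y)$. Your version merely makes explicit (via the formula $I(y)_i=\lfloor ry_i\rfloor/r$ and the two-inclusion argument) some steps the paper leaves as ``one sees easily.''
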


\begin{proof} Given a point
$x=(x_1, \ldots, x_m) \in S(X,rs)$, there is
a unique point
$I(x)=x'=(x_1', \ldots, x_m') \in S(X,r)$ such that
$x'$ ``contains'' $x$ in the sense that the
fractional part of each component of $x$, 
$x_i - \lfloor x_i \rfloor$, ``truncates'' to the
fractional part of the corresponding component of $x'$, 
$x_i' - \lfloor x_i' \rfloor$, i.e.,
\[ x_i' - \lfloor x_i' \rfloor \leq x_i - \lfloor x_i \rfloor < x_i' - \lfloor x_i' \rfloor + 1/r.
\]
(See Figure~\ref{partitions-fig}.) Define $f_s(x)=f(I(x))$. 

We must show $f_s$ is a continuous multivalued function that induces $F$. If $x,x'$ are $c_a$-adjacent in
$S(X,rs)$, then one sees easily that
$I(x)$ and $I(x')$ are equal or
$c_a$-adjacent in $S(X,r)$. Hence
$f_s(x)=f(I(x))$ and $f_s(x')=f(I(x'))$ are equal or
$c_b$-adjacent in $Y$. Thus, $f_s$
is continuous. For $w \in X$ we have
\[ F(w) = \bigcup_{y \in E_r^{-1}(w)}f(y)
   = \bigcup_{u \in E_{rs}^{-1}(w)}f_s(u).
\]
Therefore, $f$ induces $F$.
\end{proof}

For multivalued functions
$F_i: (X_i,\kappa_i) \multimap (Y_i,\lambda_i)$,
$1 \leq i \leq v$,
define the product multivalued function
\[ \Pi_{i=1}^v F_i: (\Pi_{i=1}^v X_i, NP_v(\kappa_1,\ldots, \kappa_v)) \multimap (\Pi_{i=1}^v Y_i, NP_v(\lambda_1, \ldots, \lambda_v)) \]
by
\[ (\Pi_{i=1}^v F_i)(x_1,\ldots, x_v) =
   \Pi_{i=1}^v F_i(x_i).
\]

\begin{thm}
\label{multi-prod-thm}
Given multivalued functions
$F_i: (X_i,c_{a_i}) \multimap (Y_i,c_{b_i})$,
$1 \leq i \leq v$, if each
$F_i$ is continuous then the product multivalued function
\[ \Pi_{i=1}^v F_i: (\Pi_{i=1}^v X_i, NP_v(c_{a_1},\ldots, c_{a_v})) \multimap (\Pi_{i=1}^v Y_i, NP_v(c_{b_1}, \ldots, c_{b_v})) \]
is continuous.
\end{thm}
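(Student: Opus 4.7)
The plan is to reduce to a common subdivision level via Lemma~\ref{gcm-subdiv}, build the product single-valued function and invoke Theorem~\ref{prod-cont} for its continuity, then verify that this product function induces the product multifunction in the sense of Definition~\ref{multi-cont}.

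First, by Definition~\ref{multi-cont}, each $F_i$ is induced by some continuous single-valued function $f_i: (S(X_i,r_i), c_{a_i}) \to (Y_i, c_{b_i})$. Setting $r = r_1 r_2 \cdots r_v$ and applying Lemma~\ref{gcm-subdiv} with $s = r/r_i$ to each $f_i$, I obtain continuous functions $\tilde f_i: (S(X_i,r), c_{a_i}) \to (Y_i, c_{b_i})$, each still inducing the corresponding $F_i$. Bringing all factors onto a common subdivision is essential because Definition~\ref{multi-cont} requires a single $r$ for the inducing function of the product multifunction.

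Second, I would identify the subdivision of the product with the product of the subdivisions. Writing $X_i \subset \Z^{m_i}$ and viewing $\Pi_{i=1}^v X_i$ as a subset of $\Z^{m_1+\cdots+m_v}$, the definition of $S(\,\cdot\,,r)$ gives
\[ S\bigl(\Pi_{i=1}^v X_i,\, r\bigr) \;=\; \Pi_{i=1}^v S(X_i, r) \]
as sets, since taking componentwise floors respects the product structure. Moreover, under this identification, the induced $NP_v(c_{a_1},\ldots,c_{a_v})$-adjacency on $S(\Pi X_i,r)$ coincides with the block-by-block $NP_v$-adjacency on $\Pi_{i=1}^v S(X_i, r)$: both demand $c_{a_i}$-adjacency on at least one and at most $v$ of the blocks of integer coordinates and equality on the others. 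As a corollary, the preimage $E_r^{-1}(x)$ of a point $x=(x_1,\ldots,x_v)$ factors as $\Pi_{i=1}^v E_r^{-1}(x_i)$.

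Third, I would form the product map $\tilde f = \Pi_{i=1}^v \tilde f_i$ on this identified domain. By Theorem~\ref{prod-cont}, $\tilde f$ is $(NP_v(c_{a_1},\ldots,c_{a_v}),\, NP_v(c_{b_1},\ldots,c_{b_v}))$-continuous. Using the factorization of $E_r^{-1}(x)$, a direct calculation gives
\[ \bigcup_{y \in E_r^{-1}(x)} \{\tilde f(y)\} \;=\; \Pi_{i=1}^v \bigcup_{y_i \in E_r^{-1}(x_i)} \{\tilde f_i(y_i)\} \;=\; \Pi_{i=1}^v F_i(x_i), \]
so $\tilde f$ induces $\Pi_{i=1}^v F_i$, which is therefore continuous. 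The main obstacle is the bookkeeping in the second step --- specifically, verifying cleanly that the $NP_v$-adjacency on $S(\Pi X_i,r)$ really matches $NP_v$ of the induced $c_{a_i}$-adjacencies on the factor subdivisions --- but this amounts to expanding Definitions~\ref{NP_u-def} and the subdivision definition rather than to any substantive argument.
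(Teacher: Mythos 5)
Your proposal is correct and follows essentially the same route as the paper's proof: pass to a common subdivision level via Lemma~\ref{gcm-subdiv}, apply Theorem~\ref{prod-cont} to the product of the inducing single-valued functions, and observe that this product induces $\Pi_{i=1}^v F_i$. The only difference is that you spell out the identification $S(\Pi_{i=1}^v X_i, r) = \Pi_{i=1}^v S(X_i,r)$ (with matching adjacencies) and the factorization of $E_r^{-1}$, which the paper compresses into ``it is clear that this function generates the multivalued function $\Pi_{i=1}^v F_i$.''
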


\begin{proof}
If each $F_i$ is continuous, there exists
a continuous
$f_i: (S(X_i,r_i),c_{a_i}) \to (Y_i,c_{b_i})$
that generates $F_i$. By Lemma~\ref{gcm-subdiv},
we may assume that all the $r_i$ are equal.
Thus, for some positive integer $r$, we have
$f_i: (S(X_i,r),c_{a_i}) \to (Y_i,c_{b_i})$
generating $F_i$.

By Theorem~\ref{prod-cont}, the product 
multivalued function
\[ \Pi_{i=1}^v f_i: (\Pi_{i=1}^v S(X_i,r),
NP_v(c_{a_1},\ldots,c_{a_v})) \to
(\Pi_{i=1}^v (Y_i,
NP_v(c_{b_1},\ldots,c_{b_v}))
\]
is continuous. It is clear that this function
generates the multivalued function
$\Pi_{i=1}^v F_i$.
\end{proof}

The paper~\cite{egs08} has several
results concerning the following notions.

\begin{definition}
\rm{\cite{egs08}}
Let $(X,\kappa) \subset \Z^n$ be a digital image
and $Y \subset X$. We say that $Y$ is
a {\em $\kappa$-retract of $X$} if there exists a $\kappa$-continuous multivalued
function $F: X \multimap Y$ (a 
{\em multivalued $\kappa$-retraction}) such that
$F(y) = \{y\}$ if $y \in Y$.
If moreover $F(x) \subset N_{c_n}^*(x)$ for every $x \in X \setminus Y$, we say
that $F$ is a {\em multivalued 
$(N, \kappa)$-retraction}, and
$Y$ is a {\em multivalued 
$(N, \kappa)$-retract} of $X$.
\end{definition}

We generalize Theorem~\ref{ret-thm} as follows.

\begin{thm}
\label{multi-ret-thm}
For $1 \leq i \leq v$, let $A_i \subset (X_i,\kappa_i) \subset \Z^{n_i}$.
Suppose $F_i: X_i \multimap A_i$ is a continuous multivalued function for all~$i$. Then $F_i$ is a multivalued retraction for all~$i$ if and only if
$F=\Pi_{i=1}^v F_i: \Pi_{i=1}^v X_i \multimap \Pi_{i=1}^v A_i$ is a multivalued
$NP_v(\kappa_1,\ldots,\kappa_v)$-retraction. Further,
$F_i$ is an $(N,\kappa_i)$-retraction for 
all~$i$ if and only if $F$ is a multivalued
$(N,NP_v(\kappa_1,\ldots,\kappa_v))$-retraction.
\end{thm}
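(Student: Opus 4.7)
My plan is to reduce the theorem to three ingredients already established in the paper: Theorem~\ref{multi-prod-thm} for continuity of the product multivalued function, the pointwise product formula $F(x) = \Pi_{i=1}^v F_i(x_i)$ (which is simply the definition of $F$), and the identification $NP_v(c_{n_1},\ldots,c_{n_v}) = c_{n_1+\cdots+n_v}$ obtained from Theorem~\ref{2prod} combined with iterated applications of Proposition~\ref{inductive-equiv}. I will handle the retraction iff first, and then layer the $N$-neighborhood condition on top.

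For the first biconditional, I argue pointwise. For any $a=(a_1,\ldots,a_v) \in \Pi_{i=1}^v A_i$, the definition of the product gives $F(a) = \Pi_{i=1}^v F_i(a_i)$. A product of nonempty sets is a singleton precisely when each factor is a singleton, and in that case the unique element of $F_i(a_i)$ is forced to be $a_i$. Hence $F(a) = \{a\}$ for every $a\in \Pi A_i$ holds if and only if $F_i(a_i) = \{a_i\}$ for every $i$ and every $a_i \in A_i$. Combined with Theorem~\ref{multi-prod-thm}, which gives continuity of $F$ from continuity of the $F_i$, this yields the retraction equivalence.

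For the second biconditional, set $N = n_1+\cdots+n_v$. By iterating Proposition~\ref{inductive-equiv} and applying Theorem~\ref{2prod} at each step, $NP_v(c_{n_1},\ldots,c_{n_v}) = c_N$. Lemma~\ref{radn} at radius~$1$ then gives
\[ N^*_{c_N}(x) \;=\; N^*_{NP_v(c_{n_1},\ldots,c_{n_v})}(x) \;=\; \Pi_{i=1}^v N^*_{c_{n_i}}(x_i) \]
for every $x=(x_1,\ldots,x_v) \in \Pi X_i$. Forward direction: if each $F_i(x_i) \subset N^*_{c_{n_i}}(x_i)$ for $x_i \in X_i\setminus A_i$, then for any $x \in \Pi X_i \setminus \Pi A_i$ the containment $F(x) = \Pi F_i(x_i) \subset \Pi N^*_{c_{n_i}}(x_i) = N^*_{c_N}(x)$ follows, and the first biconditional finishes the job. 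Reverse direction: fix $i$ and $x_i \in X_i\setminus A_i$; for any choice of $x_j \in X_j$ for $j\neq i$, the point $x=(x_1,\ldots,x_v)$ lies in $\Pi X_i \setminus \Pi A_i$, so the hypothesized containment $F(x) \subset N^*_{c_N}(x) = \Pi_j N^*_{c_{n_j}}(x_j)$ projects onto the $i$th factor to give $F_i(x_i) \subset N^*_{c_{n_i}}(x_i)$.

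The only mildly subtle point is that the $(N,\kappa)$-definition freezes the ambient adjacencies $c_{n_i}$ (and $c_N$ on the product) into the neighborhood condition even though the continuity-adjacencies $\kappa_i$ (and $NP_v(\kappa_1,\ldots,\kappa_v)$) may be different. The identification $NP_v(c_{n_1},\ldots,c_{n_v})=c_N$ is precisely what lets Lemma~\ref{radn} convert the product-of-neighborhoods statement into a single $c_N$-neighborhood statement; once this bridge is in place, the rest is routine manipulation of product sets.
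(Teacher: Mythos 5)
Your proof is correct and follows essentially the same route as the paper's: Theorem~\ref{multi-prod-thm} for continuity of the product, the pointwise identity $F(a)=\Pi_{i=1}^v F_i(a_i)=\{a\}$ for the retraction condition, and Lemma~\ref{radn} for the $(N,\kappa)$ clause. You are in fact somewhat more explicit than the paper in the last step, where you justify $NP_v(c_{n_1},\ldots,c_{n_v})=c_{n_1+\cdots+n_v}$ via Theorem~\ref{2prod} and Proposition~\ref{inductive-equiv}, a bridge the paper leaves implicit.
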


\begin{proof}
Let $X=\Pi_{i=1}^v X_i$,
$A=\Pi_{i=1}^v A_i$.

Suppose each $F_i$ is a multivalued retraction.
By Theorem~\ref{multi-prod-thm},
the product multivalued function
$F$ is continuous.

Given $x=(x_1,\ldots,x_v) \in X \setminus A$, there exists
$j$ such that $x_j \in X_j \setminus A_j$, hence
$x \not \in F(A)$. Also, given
$a=(a_1,\ldots,a_v) \in A$, we have
\[ F(a) = \Pi_{i=1}^v F_i(a_i)=
   \Pi_{i=1}^v \{a_i\} = \{a\}.
\]
Therefore, $F(X)=A$, and $F$ is a multivalued
retraction.

Conversely, suppose $F$ is a multivalued
retraction. By Theorem~\ref{multi-prod-thm},
each $F_i$ is continuous.
Also, since $F(X)=A$, we must have, for each $i$,
$F_i(X_i)=A_i$, and since $F$ is a retraction, $F_i(a)=\{a\}$ for $a \in A_i$. Therefore, $F_i$ is a multivalued
retraction.

Further, from Lemma~\ref{radn},
for $x=(x_1,\ldots, x_v) \in X$,
$N_{NP_v(c_{n_1}, \ldots, c_{n_v})}^*(x)=
\Pi_{i=1}^v N_{c_{n_i}}(x_i)$. It
follows that $F_i$ is an $(N,\kappa_i)$-retraction for 
all~$i$ if and only if $F$ is a multivalued
$(N,NP_v(\kappa_1,\ldots,\kappa_v))$-retraction.
\end{proof}

\subsection{Connectivity preserving multifunctions}
\begin{thm}
Let $f_i: (X_i, \kappa_i) \multimap (Y_i, \lambda_i)$ be
a multivalued function between digital images,
$1 \leq i \leq v$. Then the product map
\[ \Pi_{i=1}^v f_i : (\Pi_{i=1}^v X_i, NP_v(\kappa_1, \ldots, \kappa_v)) \multimap (\Pi_{i=1}^v Y_i, NP_v(\lambda_1, \ldots, \lambda_v))
\]
is a connectivity preserving multifunction if and
only if each $f_i$
is a connectivity preserving multifunction.
\end{thm}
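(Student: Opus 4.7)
The plan is to invoke Proposition~\ref{mild-and-weak}, which characterizes a connectivity-preserving multivalued function as one that both has weak continuity and sends each point to a connected subset. Since Theorem~\ref{weak-prod} already handles the weak-continuity half of the equivalence for product multivalued functions, the task reduces to showing that the point-images of $F=\Pi_{i=1}^v F_i$ are connected if and only if all the point-images $F_i(x_i)$ of the factors are connected.

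First I would unwind the definition of the product multivalued function: for $(x_1,\ldots,x_v)\in \Pi_{i=1}^v X_i$,
\[ F(x_1,\ldots,x_v) \;=\; \Pi_{i=1}^v F_i(x_i), \]
viewed as a subset of $(\Pi_{i=1}^v Y_i, NP_v(\lambda_1,\ldots,\lambda_v))$. I would then observe that the adjacency on this subset inherited from $NP_v(\lambda_1,\ldots,\lambda_v)$ is, by a direct check against Definition~\ref{NP_u-def}, precisely the normal product adjacency $NP_v$ of the factor adjacencies $\lambda_i$ restricted to the subimages $(F_i(x_i),\lambda_i)$. Theorem~\ref{prod-connected}, applied to these factor subimages, then says exactly that $\Pi_{i=1}^v F_i(x_i)$ is $NP_v(\lambda_1,\ldots,\lambda_v)$-connected if and only if each $F_i(x_i)$ is $\lambda_i$-connected. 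Combining this with Theorem~\ref{weak-prod} and Proposition~\ref{mild-and-weak} gives both directions of the desired biconditional.

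The only obstacle is bookkeeping: verifying that the point-image of the product really is the product of the point-images, equipped with the expected restricted adjacency. Once this routine identification is made, the cited results do all the real work, and no new construction or estimate is required.
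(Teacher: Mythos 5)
Your proposal is correct and follows essentially the same route as the paper: the paper's proof also reduces the claim to the two-part characterization of connectivity preservation (connected point-images plus adjacency of images of adjacent points, via Theorem~\ref{mildadj}, which is equivalent to your use of Proposition~\ref{mild-and-weak}) and then verifies each part coordinatewise. The only cosmetic difference is that you outsource the two coordinatewise checks to Theorem~\ref{weak-prod} and Theorem~\ref{prod-connected}, whereas the paper asserts the corresponding equivalences inline.
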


\begin{proof}
Let $X=\Pi_{i=1}^v X_i$, $Y=\Pi_{i=1}^v Y_i$, $F = \Pi_{i=1}^v f_i: X \multimap Y$. Assume 
\[ x=(x_1,\ldots,x_v),~x'=(x_1',\ldots,x_v')
\]
with $x_i,x_i' \in X_i$.
Using Theorem~\ref{mildadj}, we argue
as follows.

$F$ is connectivity preserving \\
$\Leftrightarrow$
\begin{itemize}
\item For every $x \in X$, $F(x)=\Pi_{i=1}^v F_i(x_i)$ is
      a connected subset of $Y$, and
\item For adjacent $x,x' \in X$,
      $F(x)=\Pi_{i=1}^v F_i(x_i)$ and $F(x')=\Pi_{i=1}^v F_i(x_i')$ are adjacent
      subsets of $Y$.
\end{itemize}
$\Leftrightarrow$
\begin{itemize}
\item For every $x_i \in X_i$, $F_i(x)$ is
      a connected subset of $Y_i$, and
\item For adjacent $x_i,x_i' \in X_i$,
      $F_i(x_i)$ and $F_i(x_i')$ are adjacent
      subsets of $Y_i$.
\end{itemize}
$\Leftrightarrow$ \\
each $F_i$ is
connectivity preserving.
\end{proof}

\section{$NP_v$ and shy maps}
The following generalizes a result
of~\cite{Boxer16}.

\begin{thm}
\label{shy-prod}
Let $f_i: (X_i, \kappa_i) \to (Y_i, \lambda_i)$ be
a continuous surjection between digital images,
$1 \leq i \leq v$. Then the product map
\[ f= \Pi_{i=1}^v f_i : (\Pi_{i=1}^v X_i, NP_v(\kappa_1, \ldots, \kappa_v)) \to (\Pi_{i=1}^v Y_i, NP_v(\lambda_1, \ldots, \lambda_v))
\]
is shy if and only if each $f_i$
is a shy map.
\end{thm}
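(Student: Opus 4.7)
The plan is to reduce this to the analogous statement about connectivity preserving multivalued functions, which was established in the immediately preceding subsection, by passing through the characterization of shy maps via their multivalued inverses given in Theorem~\ref{shy-thm}.

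First I would verify the product map $f=\Pi_{i=1}^v f_i$ is a continuous surjection between $(\Pi X_i, NP_v(\kappa_1,\ldots,\kappa_v))$ and $(\Pi Y_i, NP_v(\lambda_1,\ldots,\lambda_v))$: continuity is immediate from Theorem~\ref{prod-cont}, and surjectivity follows because each $f_i$ is surjective. So both $f$ and the $f_i$ satisfy the hypothesis of Theorem~\ref{shy-thm}, which tells us that the shy property is equivalent to the multivalued inverse being connectivity preserving.

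Next I would observe the key identity
\[ f^{-1}(y_1,\ldots,y_v) \;=\; \Pi_{i=1}^v f_i^{-1}(y_i), \]
so that as a multivalued function $f^{-1}$ is exactly the product multifunction $\Pi_{i=1}^v f_i^{-1}$. The preceding theorem on connectivity preservation of product multifunctions says that $\Pi_{i=1}^v f_i^{-1}$ is connectivity preserving (with respect to the $NP_v$ adjacencies on the product) if and only if each $f_i^{-1}$ is connectivity preserving. Chaining equivalences:
\[ f \text{ is shy} \;\Leftrightarrow\; f^{-1} \text{ is c.p.} \;\Leftrightarrow\; \Pi f_i^{-1} \text{ is c.p.} \;\Leftrightarrow\; \text{each } f_i^{-1} \text{ is c.p.} \;\Leftrightarrow\; \text{each } f_i \text{ is shy}, \]
where the first and last use Theorem~\ref{shy-thm}, the second uses the identity above, and the third is the product theorem for connectivity preserving multifunctions.

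I do not expect any serious obstacle; the only point requiring a sentence of care is the set-theoretic identity $(\Pi f_i)^{-1} = \Pi f_i^{-1}$, which is a direct unpacking of definitions. Alternatively, one could give a direct combinatorial proof by constructing, for each $y=(y_1,\ldots,y_v)\in\Pi Y_i$, a path in $f^{-1}(y)=\Pi f_i^{-1}(y_i)$ between any two points by concatenating coordinate-wise paths (the same construction used in Theorem~\ref{prod-connected}), and similarly for adjacent pairs $y,y'\in\Pi Y_i$; but the route through Theorem~\ref{shy-thm} is cleaner and requires no new calculation.
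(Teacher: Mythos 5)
Your proposal is correct, and it takes a genuinely different route from the paper's proof. The paper argues directly: for the ``only if'' direction it applies the characterization in Theorem~\ref{shy-thm} that $f$ is shy iff preimages of connected sets are connected, together with Theorem~\ref{prod-connected} and the projections $p_i$, to show each $f_i^{-1}(Y_i')=p_i\bigl(f^{-1}(\Pi_{i=1}^v Y_i')\bigr)$ is connected; for the ``if'' direction it verifies Definition~\ref{shy-def} by hand, writing $f^{-1}(y)$ and $f^{-1}(\{y,y'\})$ as products $\Pi_{i=1}^v f_i^{-1}(y_i)$ and $\Pi_{i=1}^v f_i^{-1}(\{y_i,y_i'\})$ of connected sets and invoking Theorem~\ref{prod-connected} again. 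You instead route both directions through the single equivalence ``$f$ is shy iff $f^{-1}$ is connectivity preserving'' from Theorem~\ref{shy-thm}, the set-theoretic identity $(\Pi_{i=1}^v f_i)^{-1}=\Pi_{i=1}^v f_i^{-1}$, and the product theorem for connectivity preserving multifunctions proved in the preceding subsection. Your argument is shorter and more modular --- it exhibits the shy-map product theorem as a formal corollary of the multifunction product theorem, and the hypotheses needed to invoke Theorem~\ref{shy-thm} on both $f$ and the $f_i$ (continuous surjectivity) are all supplied by the theorem's hypothesis plus Theorem~\ref{prod-cont}, as you note. What the paper's more hands-on version buys is independence from the connectivity-preserving product theorem and a proof that stays entirely within the shy-map vocabulary; ultimately both arguments rest on Theorem~\ref{prod-connected}, yours indirectly through Theorem~\ref{mildadj} and the c.p.\ product theorem.
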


\begin{proof}
Suppose the product map is shy.
Since $f_i = p_i \circ f$, it follows
from Theorems~\ref{composition} 
and~\ref{projection-cont} that $f_i$ is continuous. 
Also, since $f$ is surjective,
$f_i$ must be surjective.

Let $Y_i'$ be a $\lambda_i$-connected subset of $Y_i$.
By Theorem~\ref{prod-connected}, $\Pi_{i=1}^v Y_i'$ is
connected in $\Pi_{i=1}^v Y_i$.
Since the product map is shy, we have from
Theorem~\ref{shy-thm} that 
\[ X' = f^{-1}(\Pi_{i=1}^v Y_i') =
   \Pi_{i=1}^v f_i^{-1}(Y_i')
\]
is $NP_v(\kappa_1, \ldots, \kappa_v)$-connected. Then
$f_i^{-1}(Y_i')=p_i(X')$ is $\kappa_i$-connected.
From Theorem~\ref{shy-thm}, it follows that $f_i$ is
shy.

Conversely, suppose each $f_i$ is shy. By
Theorem~\ref{prod-cont}, the product map
$\Pi_{i=1}^v f_i$ is continuous, and it is easily
seen to be surjective.

Let $y_i \in Y_i$. Then
$(\Pi_{i=1}^v f_i)^{-1}(y_1,\ldots,y_v) =
 \Pi_{i=1}^v f_i^{-1}(y_i)$ is connected, by
 Definition~\ref{shy-def} and 
 Theorem~\ref{prod-connected}.

Let $y_i, y_i'$ be $\lambda_i$-adjacent in $Y_i$, and let
$y=(y_1,\ldots,y_v)$, $y'=(y_1',\ldots,y_v')$.
Then $y$ and $y'$ are adjacent in $Y$,
and
$(\Pi_{i=1}^v f_i)^{-1}(\{y,y'\}) =
 \Pi_{i=1}^v f_i^{-1}(\{y_i,y_i'\})$ is connected, by
 Definition~\ref{shy-def} and 
 Theorem~\ref{prod-connected}.

Thus, by Definition~\ref{shy-def}, $\Pi_{i=1}^v f_i$
is shy.
\end{proof}

The statement analogous to
Theorem~\ref{shy-prod} is not generally true if $c_u$-adjacencies
are used instead of normal product
adjacencies, as shown in the following.

\begin{exl}
Recall Example~\ref{factors-not-prod}, in which $X=\{(0,0),(1,0)\} \subset \Z^2$, $Y=\{(0,0),(1,1)\} \subset \Z^2$. There is a
$(c_1,c_2)$-isomorphism $f: X \to Y$. 
Consider $X' = X \times \{0\} \subset \Z^3$, $Y'= Y \times \{0\} \subset \Z^3$.
Although the maps $f$ and $1_{\{0\}}$ are,
respectively, $(c_1,c_2)$- and $(c_1,c_1)$-isomorphisms and therefore are, respectively, $(c_1,c_2)$- and $(c_1,c_1)$-shy,
the product map
$f \times 1_{\{0\}}: X' \to Y'$
is not $(c_1,c_1)$-shy, by
Theorem~\ref{shy-thm}, since, as
observed in Example~\ref{factors-not-prod}, $X'$ is $c_1$-connected
and $Y'$
is not $c_1$-connected. \qed
\end{exl}

\section{Further remarks}
We have studied adjacencies that are extensions of
the normal product adjacency for finite Cartesian
products of digital images. We have shown that such
adjacencies preserve many properties for finite 
Cartesian products of digital images that, in some
cases, are not preserved by the use of the 
$c_u$-adjacencies most commonly used in the 
literature of digital topology.

\section{Acknowledgment}
We are grateful for the remarks of P. Christopher
Staecker, who suggested this study and several of its
theorems, and helped with the proofreading.

\end{document}